\newcommand{\pref}[1]{{\upshape(\ref{#1})}}
\newcommand{\csee}[1]{(see \cref{#1})}
\newcommand{\cseebelow}[1]{(see \cref{#1} below)}
\newcommand{\fullcref}[2]{\cref{#1}\pref{#1-#2}}
\newcommand{\fullCref}[2]{\Cref{#1}\pref{#1-#2}}
\newcommand{\fullcsee}[2]{(see \fullcref{#1}{#2}}
\renewcommand{\Cref}{\cref}
\newcommand{\normal}{\trianglelefteq}
\newcommand{\iso}{\cong}
\newcommand{\dir}{\overrightarrow}
\newcommand{\integer}{\mathbb{Z}}
\renewcommand{\natural}{\mathbb{N}}
\newcommand{\Cayd}{\mathop{\dir{\mathrm{Cay}}}}
\newcommand{\ah}{\overline{a}}
\newcommand{\bh}{\overline{b}}
\newcommand{\ato}{\stackrel{\textstyle a}{\to}}
\newcommand{\bto}{\stackrel{\textstyle b}{\to}}
\newcommand{\noprelistbreak}{\smallskip\@nobreaktrue\nopagebreak} 
\newcommand{\refnote}[1]{%
	\marginpar{%
		\color{blue}
		\vbox to 0pt{\vss
		$\begin{pmatrix} \text{note} \\ \text{\ref{#1}} \end{pmatrix}$%
		\vskip -11pt}}}
\newcommand{\refnotelower}[2]{%
	\marginpar{%
		\color{blue}
		\vbox to 0pt{\vskip #1pt
		$\begin{pmatrix} \text{note} \\ \text{\ref{#2}} \end{pmatrix}$%
		\vss}}}
\theoremstyle{definition}
\newtheorem{aid}[equation]{}
\newcommand{\oldendaid}{}
\let\oldendaid=\endaid
\renewcommand{\endaid}{\oldendaid\bigskip\hrule width\textwidth \bigbreak}
 \newcounter{case}
 \newenvironment{case}[1][\unskip]{\refstepcounter{case}\it
 \medskip \noindent \kern-1pt Case \thecase\ #1. }{\unskip\upshape}
 \renewcommand{\thecase}{\arabic{case}}
 \newenvironment{claim}[1][\unskip]{\it
 \medskip \noindent \kern-1pt Claim. #1 }{\unskip\upshape}
\numberwithin{equation}{section}
\theoremstyle{plain}
\newtheorem{thm}[equation]{Theorem}
\newtheorem{prop}[equation]{Proposition}
\newtheorem{cor}[equation]{Corollary}
\newtheorem{lem}[equation]{Lemma}
\newtheorem*{lem*}{Lemma}
\theoremstyle{definition}
\newtheorem*{notation}{Notation}
\newtheorem*{defn}{Definition}
\newtheorem{eg}[equation]{Example}
\theoremstyle{remark}
\newtheorem{rem}[equation]{Remark}
\newtheorem{rems}[equation]{Remarks}
\newtheorem*{ack}{Acknowledgments}
\crefname{rems}{Remark}{Remarks}
\begin{document}

\title[On Cayley digraphs that do not have hamiltonian paths]{On Cayley digraphs that \\ do not have hamiltonian paths}


\setbox0\hbox to \textwidth{\hss \it Department of Mathematics and Computer Science,\hss}
\setbox1\hbox to \textwidth{\hss \it University of Lethbridge, Lethbridge, Alberta, T1K~3M4, Canada\hss}
\setbox2\hbox to \textwidth{\hss \textsf{Dave.Morris@uleth.ca, http://people.uleth.ca/\!$\sim$dave.morris/}\hss}
\author[Dave Witte Morris]{Dave Witte Morris \\ \vskip 0pt
\copy0 \smallskip \copy1 \medskip \copy2
}

\begin{abstract}
We construct an infinite family $\bigl\{ \Cayd(G_i;a_i,b_i) \bigr\}$ of connected, $2$-generated Cayley digraphs that do not have hamiltonian paths, such that the orders of the generators $a_i$ and~$b_i$ are unbounded. 
We also prove that if $G$ is any finite group with $|[G,G]| \le 3$, then every connected Cayley digraph on~$G$ has a hamiltonian path (but the conclusion does not always hold when $|[G,G]| = 4$ or~$5$).
\end{abstract}

\date{\today} 

\maketitle

\section{Introduction}

\begin{defn}
For a subset~$S$ of a finite group~$G$, the \emph{Cayley digraph} $\Cayd(G;S)$\refnote{S=ab}
 is the directed graph whose vertices are the elements of~$G$, and with a directed edge $g \to gs$ for every $g \in G$ and $s \in S$. The corresponding \emph{Cayley graph} is the underlying undirected graph that is obtained by removing the orientations from all the directed edges.
\end{defn}

It has been conjectured that every (nontrivial) connected Cayley graph has a hamiltonian cycle. (See the bibliography of \cite{KutnarEtAl-SmallOrder} for some of the  literature on this problem.) This conjecture does not extend to the directed case, because there are many examples of connected Cayley digraphs that do not have hamiltonian cycles. In fact, infinitely many Cayley digraphs do not even have a hamiltonian path:

\begin{prop}[{attributed to J.\,Milnor \cite[p.~201]{Nathanson-PartProds}}] 
	 \label{23NoHP}
Assume the finite group~$G$ is generated by two elements $a$ and~$b$, such that $a^2 = b^3 = e$.\refnote{ProveMilnor}  If\/ $|G| \ge 9  |ab^2|$, then the Cayley digraph\/ $\Cayd(G; a,b)$ does not have a hamiltonian path.
\end{prop}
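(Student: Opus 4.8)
The plan is to prove the contrapositive: assuming $\Cayd(G;a,b)$ has a hamiltonian path, I will deduce $|G|\le 9\,|ab^2|-3$. First some reductions. If $a=e$ or $b=e$ then $|G|\le 3<9\le 9\,|ab^2|$, so I may assume $a$ has order~$2$ and $b$ has order~$3$; hence $3\mid n$, where $n:=|G|$. Put $d:=b^2a$. Since $d=b^{-1}(ab^2)b$, we have $|d|=|ab^2|=:k$. Left-translating, I may assume the path starts at~$e$, so it is recorded by a word $w=w_1\cdots w_{n-1}$ over $\{a,b\}$ whose partial products $e=P_0,P_1,\dots,P_{n-1}$ are pairwise distinct. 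Since $a^2=e$ there is no $aa$ in $w$, and since $b^3=e$ there is no $bbb$; so $w=B_0\,a\,B_1\,a\cdots a\,B_m$ with each block $B_i=b^{e_i}$, $e_i\in\{0,1,2\}$, and $e_i\ge 1$ for $0<i<m$. The blocks are the maximal subpaths lying in a single coset of $\langle b\rangle$ (such a coset being a directed $3$-cycle under right multiplication by~$b$); I call each a \emph{visit} to that coset, and note $n=\sum_i(e_i+1)$.

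The structural heart of the argument is the following. An interior visit (neither first nor last) is preceded and followed by an $a$-edge, and it cannot have length~$0$: if it were a single vertex~$x$, then the edges $xa\to x$ and $x\to xa$ would both occur in the path, forcing $x$ to be immediately preceded and immediately followed by~$xa$. So every interior visit has length~$\ge 1$. Consequently a coset of $\langle b\rangle$ containing neither the first vertex $v_1$ nor the last vertex $v_n$ is visited exactly once, by a length-$2$ visit (its three vertices are covered by disjoint directed subpaths, each of which, being interior, has $\ge 2$ vertices). A short inspection of the one or two cosets that do contain $v_1$ or $v_n$ shows: if $v_1\in C$ and $v_n\notin C$, then either $B_0$ has length~$2$ and is $C$'s only visit, or $B_0$ has length~$0$ and $C$ has exactly one further visit, of length~$1$ (length~$1$ for $B_0$ is impossible, as it leaves a vertex of $C$ uncovered); symmetrically for $B_m$; and if $v_1,v_n$ lie in a common coset, then $B_0,B_m$ are its only visits, of lengths $0$ and~$1$. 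Hence, apart from $B_0$, $B_m$, and at most two interior blocks of length~$1$, every block equals $b^2$; there are at most two interior length-$1$ blocks, at most one per endpoint-coset; and an interior length-$1$ visit to the coset of $v_1$ (resp.\ $v_n$) occurs exactly when $B_0$ (resp.\ $B_m$) has length~$0$.

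Next I group $w$ into chunks $b^2\,a$ (one wherever $B_i=b^2$ with $i<m$) and consider maximal runs of consecutive chunks. Within a run of $r$ chunks starting from a partial product~$P$, the chunk-boundary partial products are $P,Pd,Pd^2,\dots,Pd^r$; distinctness forces $r\le k-1$. A non-chunk at the very beginning or end of $w$ does not separate two runs, so the number of runs is at most $1+(\text{number of interior length-}1\text{ blocks})\le 3$; hence at most $3(k-1)$ blocks are $b^2$ followed by~$a$. It remains to bound $n$ by cases on the endpoint blocks. The extremal case is that of two interior length-$1$ blocks, which by the structural heart forces both $B_0$ and $B_m$ to have length~$0$; then $B_m$ is not a length-$2$ visit, so \emph{every} length-$2$ visit is a chunk. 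Writing $p$ for the number of length-$2$ visits, $p\le 3(k-1)$, and counting vertices gives
\[
n \;=\; 3p + 2\cdot 2 + 1\cdot 2 \;=\; 3p+6 \;\le\; 9(k-1)+6 \;=\; 9k-3.
\]
In every other configuration of the endpoint blocks there are at most $2$ runs and the bound on~$n$ is smaller, so in all cases $n\le 9k-3<9\,|ab^2|$, contradicting $|G|\ge 9\,|ab^2|$.

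The delicate step, I expect, is the ``structural heart'': the precise accounting of how the one or two cosets meeting the ends of the path may be traversed. This is exactly what caps the number of runs at three and, in the extremal case, forces $B_0$ and $B_m$ to be trivial; a looser treatment of these boundary effects only yields $|G|<9\,|ab^2|+O(1)$, which is not enough.
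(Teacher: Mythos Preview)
Your proof is correct and follows essentially the same approach as the paper's. The paper's argument is shorter: it directly observes that any occurrence of the pattern $(a,b,a)$ in the edge word forces a specific vertex (the third vertex of the $\langle b\rangle$-coset being visited) to be an endpoint of the path, so there are at most two such patterns; then the longest possible word is $\bigl((a,b^2)^{k-1},a,b\bigr)^2(a,b^2)^{k-1}a$, of length $9k-4$, giving $|G|\le 9k-3$. Your ``structural heart'' is exactly this observation recast in the language of coset visits --- an interior length-$1$ block is precisely an $(a,b,a)$ pattern, and your argument that non-endpoint cosets are visited once with length~$2$ is the contrapositive of the paper's claim. Your chunk/run bookkeeping then reproduces the same extremal word and the same bound $9k-3$.
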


%

The examples in the above \lcnamecref{23NoHP} 
are very constrained, because the order of one generator must be exactly~$2$, and the order of the other generator must be exactly~$3$. In this note, we provide an infinite family of examples in which the orders of the generators are not restricted in this way. In fact, $a$ and~$b$ can both be of arbitrarily large order:

\begin{thm} \label{NewNonTraceable}
For any $n \in \natural$, there is a connected Cayley digraph\/ $\Cayd(G;a,b)$, such that 
\noprelistbreak
	\begin{enumerate}
	\item $\Cayd(G;a,b)$ does not have a hamiltonian path,
	and
	\item $a$ and~$b$ both have order greater than~$n$. 
	\end{enumerate}
Furthermore, if $p$ is any prime number such that $p > 3$ and $p \equiv 3 \pmod{4}$, then we may construct the example so that the commutator subgroup of~$G$ has order~$p$. More precisely, $G = \integer_m \ltimes \integer_p$ is a semidirect product of two cyclic groups, so $G$ is metacyclic.
\end{thm}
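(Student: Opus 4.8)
The plan is to exhibit the examples explicitly and then establish non‑traceability by a variant of the argument behind \cref{23NoHP}.

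\emph{The construction.} Fix the prime $p$, let $m'$ be a sufficiently large odd integer coprime to $p$, set $m = 2m'$, and let $G = \integer_m \ltimes \integer_p$ with $\integer_m = \langle t\rangle$ acting on $\integer_p = \langle z\rangle$ by inversion. Since $p$ is odd, $[t,z] = z^{-2}$ generates $\integer_p$, so $[G,G] = \integer_p$ and $G$ is metacyclic of the asserted shape. The Chinese Remainder Theorem gives $G \iso \integer_{m'} \times D_p$, where $D_p = \langle \bar\tau\rangle \ltimes \langle z\rangle$ is dihedral of order $2p$; under this identification put $a = (1, z)$ and $b = (1, \bar\tau)$. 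A routine check shows $\langle a, b\rangle = G$, while $|a| = \operatorname{lcm}(m',p) = m'p$ and $|b| = \operatorname{lcm}(m',2) = 2m'$, so both generators have order larger than~$n$, and $[G,G] = \integer_p$ by construction. Everything thus reduces to showing $\Cayd(G;a,b)$ has no hamiltonian path.

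\emph{Reduction to a sequencing problem.} The element $c = a^{-1}b = (0, z^{-1}\bar\tau)$ is an involution, and its conjugates in $G$ are exactly the elements $(0,\text{reflection})$, so its normal closure is $M := \{0\}\times D_p \normal G$, with $G/M \iso \integer_{m'}$ cyclic and generated by the common image of $a$ and $b$. Consequently every edge of $\Cayd(G;a,b)$ advances one step around the directed cycle $\Cayd(\integer_{m'}; 1)$: in the coordinates $\integer_{m'}\times D_p$, the vertex reached after moves $s_1,\dots,s_k$ is $\bigl(k \bmod m',\ \rho_1\rho_2\cdots\rho_k\bigr)$, where $\rho_i \in \{z,\bar\tau\}$ records whether $s_i$ is $a$ or $b$. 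Hence $\Cayd(G;a,b)$ has a hamiltonian path iff one can choose the $\rho_i$ so that, for every residue $r$ mod $m'$, the partial products $\rho_1\cdots\rho_k$ over $k \equiv r \pmod{m'}$ run through all of $D_p$ without repetition. This ``arc‑forcing'' reduction plays the role that the cosets of $\langle b\rangle$ play in the proof of \cref{23NoHP}. Writing a partial product as $z^{e_k}\bar\tau^{f_k}$, one has $f_k \equiv (\#\,b\text{-steps among } s_1,\dots,s_k) \pmod 2$, and $e_k$ increases or decreases by one at each $a$-step according to the parity $f_{k-1}$ and is unchanged at each $b$-step; the sequencing requirement becomes a coupled system of conditions on these two sequences.

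\emph{The contradiction, and where $p\equiv 3\pmod 4$ enters.} Inside any coset of $M$, the $p$ rotation‑type visits and the $p$ reflection‑type visits must each carry every $z$-exponent mod $p$ exactly once; summing the $z$-exponents over a coset, combining over the $m'$ cosets, and telescoping the recurrence for $e_k$, one distils a single congruence mod $p$ that a hamiltonian path would be forced to satisfy. I expect this congruence to be unsatisfiable exactly when $p \equiv 3 \pmod 4$: the dependence on $p$ modulo $4$ should surface through a quadratic‑residue (equivalently, parity) computation --- for instance the congruence would demand that $-1$ be a square mod $p$, or that the odd integer $(p-1)/2$ be even. Pinning down the precise invariant, carrying out this computation, and disposing of small $m'$ is the main obstacle; the group theory preceding it is bookkeeping. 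Once non‑traceability is established, parts~(1) and~(2) and the ``furthermore'' clause have all been arranged in the construction.
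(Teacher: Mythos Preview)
Your construction cannot work. In your group $G \cong \integer_{m'} \times D_p$, the commutator subgroup is $[G,G] = \{0\} \times \langle z\rangle \cong \integer_p$, and conjugation by $(x,d) \in G$ sends $z^k$ to $d^{-1}z^kd$, which is $z^k$ when $d$ is a rotation and $z^{-k}$ when $d$ is a reflection. Thus every element of~$G$ either centralizes $[G,G]$ or inverts it, and \cref{OnlyInvertHP} (proved later in the paper) shows that \emph{every} connected Cayley digraph on such a group has a hamiltonian path. The contradiction you ``expect'' therefore cannot exist, no matter how large $m'$ is or what the residue of~$p$ is modulo~$4$. (A quick sanity check you could have run: for $m'=1$ your digraph is $\Cayd(D_p; z, \bar\tau)$, and $(z^{p-1},\bar\tau,z^{p-1})$ is visibly a hamiltonian path there.)

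The paper's construction avoids exactly this trap by using a richer action on~$\integer_p$: it takes $G = (\integer_\alpha \times \integer_\beta) \ltimes \integer_p$ with $\bar a$ inverting~$\integer_p$ and $\bar b$ acting as multiplication by~$r^2$ for a primitive root~$r$. The hypothesis $p \equiv 3 \pmod 4$ enters precisely to force $\langle -1, r^2\rangle = \integer_p^\times$, so that conjugation by the arc-forcing subgroup $H = \langle ab^{-1}\rangle = \integer_\alpha \times \integer_\beta$ realises every automorphism of~$\integer_p$; in particular the action is no longer merely by~$\pm 1$. The non-traceability argument then applies \cref{HousmanThm} to the $p$ left cosets of~$H$ and, in each of two cases, uses this transitivity to conjugate into a position where a forbidden cycle $[g](a^\alpha)$ or $[g](b^\beta)$ must lie inside any putative hamiltonian path. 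Your reduction to partial products in~$D_p$ is not where the obstruction lives; the phrase ``I expect this congruence to be unsatisfiable'' was the warning sign.
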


\begin{rems} \
	\noprelistbreak
	\begin{enumerate} \itemsep=\smallskipamount
	\item The above results show that connected Cayley digraphs on solvable groups do not always have hamiltonian paths. On the other hand, it is an open question whether connected Cayley digraphs on \emph{nilpotent} groups always have hamiltonian paths. (See \cite{Morris-2gen} for recent results on the nilpotent case.)
	\item The above results always produce a digraph with an even number of vertices. Do there exist infinitely many connected Cayley digraphs of odd order that do not have hamiltonian paths?
	\item We conjecture that the assumption ``$p \equiv 3 \pmod{4}$'' can be eliminated from the statement of \cref{NewNonTraceable}. On the other hand, it is necessary to require that $p > 3$ \csee{GGle3}.
	\item If $G$ is abelian, then it is easy to show that every connected Cayley digraph on~$G$ has a hamiltonian path. However, some abelian Cayley digraphs do not have a hamiltonian cycle. See \cref{NonhamAbelian} for more discussion of this.
	\item The proof of \cref{NewNonTraceable} appears in \cref{MainThmPfSect}, after some preliminaries in \cref{PrelimSect}.
	\end{enumerate}
\end{rems}

\section{Preliminaries} \label{PrelimSect}

We recall some standard notation, terminology, and basic facts.

\begin{notation} 
Let $G$ be a group, and let $H$ be a subgroup of~$G$. (All groups in this paper are assumed to be finite.)
\noprelistbreak
	\begin{itemize}
	\item $e$ is the identity element of~$G$.
	\item $x^g = g^{-1} x g$, for $x,g \in G$.
	\item We write $H \normal G$ to say that $H$ is a \emph{normal} subgroup of~$G$.
	\item $H^G = \langle\, h^g \mid h \in H, \, g \in G \,\rangle$ is the \emph{normal closure} of~$H$ in~$G$, so $H^G \normal G$.
	\end{itemize}
\end{notation}

\begin{defn}
Let $S$ be a subset of the group~$G$.
\noprelistbreak
	\begin{itemize}
	\item $H = \langle S S^{-1} \rangle$ is the \emph{arc-forcing subgroup}, where $S S^{-1} = \{\, s t^{-1} \mid s,t \in S \,\}$.
	\item For any $a \in S$, $a^{-1} H$ is called the \emph{terminal coset}. (This is independent of the choice of~$a$.)\refnote{TermIndep}
	\item Any left coset of~$H$ that is not the terminal coset is called a \emph{regular coset}.
	\item For $g \in G$ and $s_1,\ldots,s_n \in S$, we use $[g](s_i)_{i=1}^n$ to denote the walk in $\Cayd(G;S)$ that visits (in order) the vertices
		$$ g, \ g s_1, \ g s_1 s_2, \ \ldots, \  g s_1 s_2 \cdots  s_n .$$
	We usually omit the prefix $[g]$ when $g = e$. Also, we often abuse notation when sequences are to be concatenated. For example,
		$$ \bigl( a^4, (s_i)_{i=1}^3, t_j \bigr)_{j=1}^2
		= (a, a, a, a, s_1, s_2, s_3, t_1, a, a, a, a,  s_1, s_2, s_3, t_2 ) .$$ 
	\end{itemize}
\end{defn}

\begin{rems} \label{ArcForcingRems} \ 
\noprelistbreak
	\begin{enumerate}
	\item  \label{ArcForcingRems-Sg}
	It is important to note that $\langle S S^{-1} \rangle \subseteq \langle Sg \rangle$, for every $g \in G$. Furthermore, we have $\langle S S^{-1} \rangle = \langle Sa^{-1} \rangle$, for every $a \in S$.\refnote{Sa}
	\item \label{ArcForcingRems-conj}
	It is sometimes more convenient to define the arc-forcing subgroup to be $\langle S^{-1} S \rangle$, instead of $\langle S S^{-1} \rangle$. (For example, this is the convention used in \cite[p.~42]{Morris-2gen}.) The difference is minor, because the two subgroups are conjugate: for any $a \in S$, we have\refnotelower0{aS}
	$$ \langle S^{-1} S \rangle = \langle a^{-1} S \rangle = \langle S a^{-1}  \rangle^ a = \langle S S^{-1} \rangle^a.$$
	\end{enumerate}
\end{rems}

\begin{defn}
Suppose $L$~is a hamiltonian path in a Cayley digraph $\Cayd(G; S)$, and $s \in S$.
\noprelistbreak
	\begin{itemize}
	\item A vertex $g \in G$ \emph{travels by~$s$} if $L$ contains the directed edge $g \to gs$.
	\item A subset $X$ of~$G$ \emph{travels by~$s$} if every element of~$X$ travels by~$s$.
	\end{itemize}
\end{defn}

\begin{lem}[{}{Housman \cite[p.~42]{Housman-enumeration}}] \label{HousmanThm}
Suppose $L$ is a hamiltonian path in $\Cayd(G;a,b)$, with initial vertex~$e$, and let $H = \langle a b^{-1} \rangle$ be the arc-forcing subgroup.\refnote{HousmanHAid}
 Then:
\noprelistbreak
	\begin{enumerate}
	\item \label{HousmanThm-terminal}
	The terminal vertex of~$L$ belongs to the terminal coset $a^{-1} H$.\refnotelower{-10}{HousmanAid}
	\item \label{HousmanThm-regular}
	Each regular coset either travels by~$a$ or travels by~$b$.
	\end{enumerate}
\end{lem}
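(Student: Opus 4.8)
The plan is to analyze how the hamiltonian path $L$ partitions the vertex set into left cosets of $H = \langle ab^{-1}\rangle$. The key structural observation is that $H$ is normal in $G = \langle a, b\rangle$: indeed $H = \langle ab^{-1}\rangle \subseteq \langle \{a,b\}b^{-1}\rangle = \langle ab^{-1}\rangle$ (using \fullcref{ArcForcingRems}{Sg}), and since $\{a,b\}$ generates $G$ while $G/H$ is generated by the common image of $a$ and $b$, the quotient $G/H$ is cyclic; hence $H \normal G$ and $G/H$ is a cyclic group generated by $\ah = aH = bH$. Writing $|G/H| = k$, the cosets of $H$ are $H, \ah H \mathrel{=:} $ (I'll just call them) $H_0 = H, H_1, \dots, H_{k-1}$, where $H_i = a^i H$, and the terminal coset is $H_{k-1} = a^{-1}H$.

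First I would set up the ``quotient walk'': since both generators $a$ and $b$ map a vertex in $H_i$ to a vertex in $H_{i+1}$ (indices mod $k$), the hamiltonian path $L$ projects to a closed-up walk in the quotient $\Cayd(G/H;\ah) = \dir{\mathrm{Cay}}(\integer_k;1)$, a directed $k$-cycle. Every edge of $L$ moves from $H_i$ to $H_{i+1}$. Counting: $L$ has $|G|-1$ edges and visits each of the $|H|$ vertices of each coset exactly once. For part~\pref{HousmanThm-terminal}, I would track the number of times the path is ``in'' each coset. The path starts at $e \in H_0$; each edge advances the coset-index by one mod $k$. After $|G|-1 = k|H|-1$ steps, the coset index has advanced by $k|H| - 1 \equiv -1 \pmod k$, so the terminal vertex lies in $H_{-1} = H_{k-1} = a^{-1}H$, which is exactly the terminal coset.

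For part~\pref{HousmanThm-regular}, fix a regular coset $H_i$ with $i \ne k-1$; I want to show all $|H|$ of its vertices travel by the same generator. Consider the out-edges from $H_i$: every vertex of $H_i$ has exactly one out-edge in $L$ except possibly one vertex (if the terminal vertex of $L$ lay in $H_i$), but since the terminal vertex is in $H_{k-1} \ne H_i$, in fact every one of the $|H|$ vertices of $H_i$ has an out-edge in $L$, each going to $H_{i+1}$. Similarly every vertex of $H_{i+1}$ has an in-edge in $L$ except possibly the initial vertex $e$; if $i+1 \ne 0$ then all $|H|$ vertices of $H_{i+1}$ receive an in-edge. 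So (for a generic regular coset) we have a perfect matching from $H_i$ to $H_{i+1}$ using only the two ``colors'' $a$ and $b$. The real content is to show this matching is monochromatic. Here I would use the group structure: the edges colored $a$ from $H_i = a^i H$ go to $a^{i+1}H$ via right multiplication by $a$, i.e. they form the bijection $h \mapsto a^i h a$; the edges colored $b$ form $h \mapsto a^i h b$. If some vertex $a^i h$ travels by $a$ and another vertex $a^i h'$ travels by $b$, then the vertex $a^i h' \cdot a \in H_{i+1}$ is not the image of any $a$-edge from $H_i$ other than from $a^i h'$ itself (which doesn't use $a$), so it must receive a $b$-edge; iterating this ``forced alternation'' around the coset — using that $Ha = Hb$ as cosets and that the map $h \mapsto hab^{-1}$ permutes $H$ — I would derive that the set of $a$-travelers and $b$-travelers in $H_i$ are each unions of cosets of $\langle \text{(trivial)} \rangle$... more carefully: the obstruction is a parity/connectivity argument on the bipartite-like structure, and I expect the clean way is to observe that if $H_i$ neither travels entirely by $a$ nor entirely by $b$, then tracing $L$ backwards and forwards through the $k$-cycle of cosets produces either a shorter closed subwalk (contradicting that $L$ is a path covering all of $G$) or strands some vertex without the right number of in/out edges.

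The main obstacle is exactly this last step — ruling out a ``mixed'' regular coset. The heart of it is a counting/pigeonhole argument on the $k$-cycle quotient: trace the successive ``levels'' $H_0 \to H_1 \to \cdots \to H_{k-1} \to H_0 \to \cdots$ that $L$ passes through; between consecutive visits to level $0$, $L$ uses a sequence of $k$ edges, and the composition of the corresponding generators (a word of the form $\ah^k$ in the quotient, so an element of $H$ in $G$) must be such that the $|H|$ such ``return elements,'' one for each of the $|H|$ passes, are distinct and account for all translations needed to visit every vertex. Because $H = \langle ab^{-1}\rangle$ is cyclic, the only way the $|H|$ return-trips can be mutually consistent is if, in each fixed level strictly between the end levels, the generator used is the same every time the path passes through — I would make this precise by induction on the number of passes, and that monochromaticity of each intermediate level is precisely statement~\pref{HousmanThm-regular}. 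I'd present the counting cleanly rather than grinding the word computations, citing that $G/H$ cyclic forces the quotient digraph to be a single directed cycle, which is what makes the ``levels'' well-defined and linearly ordered.
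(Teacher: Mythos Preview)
Your central structural claim --- that $H = \langle ab^{-1}\rangle$ is normal in $G$ --- is false in general, and your entire argument (the cyclic quotient $G/H$, the ``levels'' $H_i = a^i H$, the step-counting modulo~$k$ for part~(1), and the return-trip analysis for part~(2)) rests on it. The justification you give is circular: you invoke the quotient $G/H$ and its cyclic generator in order to conclude $H \normal G$, but forming $G/H$ as a group already presupposes normality. For a concrete counterexample, take $G = S_3$ with $a = (1\,2)$ and $b = (1\,2\,3)$: then $ab^{-1} = (2\,3)$ and $H = \{e,(2\,3)\}$, which is not normal in~$S_3$. In this example the left cosets $a^i H$ do not exhaust~$G$, and more fundamentally the two out-neighbours $ga$ and $gb$ of a vertex~$g$ need not lie in the same left coset of~$H$: that would require $a^{-1}b \in H$, whereas $H = \langle ab^{-1}\rangle$ only guarantees $ba^{-1}\in H$. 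So there is no well-defined ``coset index'' that advances by one along each edge of~$L$.

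The paper's proof avoids any quotient structure and is a two-line local propagation argument carried out entirely inside a single left coset. The key observation you are missing for part~(2) is this: if $g$ travels by~$b$, then $g(ba^{-1})$ cannot travel by~$a$, because both edges would enter the same vertex $gb = g(ba^{-1})\cdot a$, and a hamiltonian path has at most one edge into each vertex. Since $g(ba^{-1}) \in gH$, iterating gives that every element $g(ba^{-1})^k$ of $gH$ travels by~$b$ unless one of them is the terminal vertex; the symmetric argument with~$a$ finishes~(2). Part~(1) is then a corollary: no edge of~$L$ enters the initial vertex~$e$, so $a^{-1}$ does not travel by~$a$ and $b^{-1}$ does not travel by~$b$; hence the coset $a^{-1}H = b^{-1}H$ travels by neither generator and must therefore contain the terminal vertex. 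This single ``double in-edge'' observation replaces all of your counting and return-trip machinery.
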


\section{\texorpdfstring{Proof of \cref{NewNonTraceable}}{Proof of the Main Theorem}} 
 \label{MainThmPfSect}

Let 
\noprelistbreak
	\begin{itemize} \itemsep=\smallskipamount
	\item $\alpha$ be an even number that is relatively prime to $(p-1)/2$, with $\alpha > n$,
	\item $\beta$ be a multiple of $(p-1)/2$ that is relatively prime to~$\alpha$, with $\beta > n$,
	\item $\ah$ be a generator of~$\integer_\alpha$, 
	\item $\bh$~be a generator of~$\integer_\beta$,
	\item $z$~be a generator of~$\integer_p$,
	\item $r$ be a primitive root modulo~$p$,
	\item $G = (\integer_\alpha \times \integer_\beta) \ltimes \integer_p$, where $z^{\ah} = z^{-1}$ and $z^{\bh} = z^{r^2}$,
	\item $a = \ah z$, so $|a| = \alpha$, and $a$ inverts~$\integer_p$,
	\item $b = \bh z$, so $|b| = \beta$, and $b$ acts on $\integer_p$ via an automorphism of order $(p-1)/2$,
	and
	\item $H = \bigl\langle a b^{-1} \bigr\rangle = \bigl\langle \ah \, \bh^{-1} \bigr\rangle = \integer_\alpha \times \integer_\beta$.\refnote{SetupH}
	\end{itemize}
Suppose $L$ is a hamiltonian path in $\Cayd(G;a,b)$. This will lead to a contradiction.

It is well known (and easy to see) that Cayley digraphs are vertex-transitive,\refnote{verttrans}
so there is no harm in assuming that the initial vertex of~$L$ is~$e$.
Note that:
\noprelistbreak
	\begin{itemize}
	\item the terminal coset is $a^{-1} H = z^{-1} H$,\refnote{TermCosetz/MinOne}
	and
	\item since $p \equiv 3 \pmod{4}$, 
	we have $ \integer_p^\times = \langle -1 ,  r^2 \rangle$.
	\end{itemize}

\setcounter{case}{0}

\begin{case}
Assume at most one regular coset travels by~$a$ in~$L$.
\end{case}
Choose $z' \in \integer_p$, such that $z' H$\refnote{AllzH}
is a regular coset, and assume it is the coset that travels by~$a$, if such exists. 
	
For $g \in G$, let
	$$ \mathcal{B}_g = \{\, g b^k H \mid k \in \integer \,\} .$$
Letting $p' = (p-1)/2$, we have 
	$$ (r^2)^{p'-1} + (r^2)^{p'-2} + \cdots + (r^2)^{1} + 1
	= \frac{(r^2)^{p'}-1}{r^2-1} 
	= \frac{r^{p-1} -1}{r^2-1} 
	\equiv 0 \pmod{p} ,$$
so
	$$b^{(p-1)/2} = (\bh z)^{p'} = \bh^{p'} z^{(r^2)^{p'-1} + (r^2)^{p'-2} + \cdots + (r^2)^{1} + 1} 
	= \bh^{p'} \in \integer_\alpha \times \integer_\beta = H 
	.$$
Therefore $\# \mathcal{B}_e \le (p-1)/2 \le p-2$, so we can choose two cosets $z^i H$ and $z^j H$ that do not belong to~$\mathcal{B}_e$.

Recall that, by definition, $z' H$ is not the terminal coset $z^{-1} H$, so $z' z$ is a nontrivial element of~$\integer_p$. Then, since $\integer_p^\times = \langle -1 , r^2 \rangle$, we can choose some $h \in \langle \ah , \bh \rangle = H$, such that $(z^{j-i})^h = z'z$. 
Now, since
	$$ z^i H, z^j H \notin \mathcal{B}_e ,$$
and 
	$$z^{-1} h^{-1} z^{j-i} \in z^{-1} (z^{j-i})^h H = z^{-1} (z'z) H = z' H ,$$
we may multiply on the left by $g = z^{-1}h^{-1} z^{-i}$ to see that
	$$ z^{-1} H,  z' H \notin \mathcal{B}_g .$$
Therefore, no element of~$\mathcal{B}_g$ is either the terminal coset or the regular coset that travels by~$a$. This means that every coset in $\mathcal{B}_g$ travels by~$b$, so $L$ contains the cycle 
	$ [g](b^\beta)$,
which contradicts the fact that $L$ is a (hamiltonian) path. 

\begin{case} \label{TwoCosetsCase}
Assume at least two regular cosets travel by~$a$ in~$L$.
\end{case}
Let $z^i H$ and $z^j H$ be two regular cosets that both travel by~$a$. Since $\integer_p^\times = \langle -1 , r^2 \rangle$, we can choose some $h \in \langle \ah , \bh \rangle = H$, such that $(z^{-1})^h = z^{j-i}$. 

Note that $z^i h^{-1} a^k$ travels by~$a$, for every $k \in \integer$:
\noprelistbreak
	\begin{itemize} \itemsep=\smallskipamount
	\item If $k = 2 \ell$ is even, then 
		$$ a^k = (\ah z)^{2\ell} 
		= \bigl( \ah z \ah z \bigr)^\ell 
		= \bigl( \ah^2 z^{\ah} z \bigr)^\ell 
		= \bigl( \ah^2 z^{-1} z \bigr)^\ell 
		= \ah^{2\ell} 
		\in H
		,$$ 
	so $z^i h^{-1} a^k \in z^i H$ travels by~$a$.

	\item If $k = 2 \ell + 1$ is odd, then 
	$$ a^k 
	= (\ah z)^{2\ell+1} 
	= (\ah z)^{2\ell} (\ah z) 
	= \ah^{2\ell} (\ah z) 
	= \ah^k z
	,$$
so
	$$z^i h^{-1} a^k 
	= z^i h^{-1} (\ah^k z)
	= z^i h^{-1} z^{-1} \ah^k
	= z^i (z^{-1})^h h^{-1} \ah^k
	\in z^i (z^{j-i}) H
	 = z^j H
	$$
travels by~$a$.
\end{itemize}
Therefore $L$ contains the cycle $[z^i h^{-1}](a^\alpha)$, which contradicts the fact that $L$ is a (hamiltonian) path.
\qed

%
%

\section{Cyclic commutator subgroups of very small order} \label{VerySmallSect}

It is known that if $|[G,G]| = 2$, then every connected Cayley digraph on~$G$ has a hamiltonian path. (Namely, we have $[G,G] \subseteq Z(G)$, so $G$ is nilpotent,\refnote{CentralIsNilp}
and the conclusion therefore follows from \fullcref{2gen}{prime} below.) In this \lcnamecref{VerySmallSect}, we prove the same conclusion when $|[G,G]| = 3$. We also provide counterexamples to show that the conclusion is not always true when $|[G,G]| = 4$ or $|[G,G]| = 5$.

We begin with several lemmas. The first three each provide a way to convert a hamiltonian path in a Cayley digraph on an appropriate subgroup of~$G$ to a hamiltonian path in a Cayley digraph on all of~$G$.

\begin{lem} \label{MayAssumeS=2}
Assume
\noprelistbreak
	\begin{itemize}
	\item $G$ is a finite group, such that\/ $[G,G] \iso \integer_{p^k}$, where $p$ is prime and $k \in \natural$,
	\item $S$ is a generating set for~$G$,
	\item $a,b \in S$, such that $\langle [a,b] \rangle = [G,G]$,
	and
	\item $N = \langle a,b \rangle$.
	\end{itemize}
If\/ $\Cayd(N;a,b)$ has a hamiltonian path, then\/ $\Cayd(G;S)$ has a hamiltonian path.
\end{lem}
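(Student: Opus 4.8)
The plan is to exploit that $N$ is a normal subgroup of $G$ with abelian quotient, and then to build a hamiltonian path of $\Cayd(G;S)$ by stitching together hamiltonian paths one coset of $N$ at a time. The structural fact behind this is: since $[a,b]\in N=\langle a,b\rangle$ and $\langle[a,b]\rangle=[G,G]$, we have $[G,G]\subseteq N$; consequently $N\normal G$, and $G/N$ is abelian, being a quotient of $G/[G,G]$. (We use only this consequence; the prime-power-cyclic structure of $[G,G]$ plays no further role.)

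Next I would reduce to the case in which the quotient is cyclic, by enlarging the generating set one element at a time. Precisely, I would prove the more general statement: if $M$ is a subgroup of $G$ with $[G,G]\subseteq M$ (equivalently, $M\normal G$ with $G/M$ abelian), if $T\subseteq S$ with $\langle T\rangle=M$, and if $\Cayd(M;T)$ has a hamiltonian path, then $\Cayd(G;S)$ has one. This goes by induction on $|S\setminus T|$: if $S=T$ then $M=G$ and there is nothing to prove; otherwise choose $c\in S\setminus T$ and set $M'=\langle T\cup\{c\}\rangle$, so that $[G,G]\subseteq M\subseteq M'$ and $M'/M=\langle cM\rangle$ is cyclic. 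Once we know $\Cayd(M';T\cup\{c\})$ has a hamiltonian path, the inductive hypothesis applied to $(M',\,T\cup\{c\})$ completes the step.

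Thus everything comes down to the cyclic-extension step: given $M\normal M'$ with $M'/M=\langle cM\rangle$ cyclic, convert a hamiltonian path of $\Cayd(M;T)$ into one of $\Cayd(M';T\cup\{c\})$. Let $q=[M':M]$; if $q=1$ then adjoining $c$ only adds edges, so assume $q\ge 2$ and list the cosets of $M$ in $M'$ as $M,\,Mc,\,Mc^2,\,\ldots,\,Mc^{q-1}$. Two observations do the work. First, inside a coset $Mc^i$ the edges labelled by elements of $T$ form, under the bijection $m\mapsto mc^i$, a copy of $\Cayd\bigl(M;\{\,c^i t c^{-i}\mid t\in T\,\}\bigr)$, which is isomorphic to $\Cayd(M;T)$ by way of the automorphism of $M$ given by conjugation by $c^i$; in particular it is vertex-transitive and has a hamiltonian path, and therefore has a hamiltonian path \emph{starting at any prescribed vertex}. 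Second, every $c$-edge runs from $Mc^i$ to $Mc^{i+1}$. So: traverse $M$ along a hamiltonian path, ending at some vertex $w_0$; follow the $c$-edge $w_0\to w_0 c$ into $Mc$; traverse $Mc$ along a hamiltonian path starting at $w_0 c$, ending at some vertex $w_1$; follow $w_1\to w_1 c$ into $Mc^2$; and continue through all $q$ cosets. This visits every element of $M'=\bigsqcup_{i} Mc^i$ exactly once, so it is a hamiltonian path of $\Cayd(M';T\cup\{c\})$.

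The argument is short once set up this way, and there is no serious obstacle; the point I would emphasize is the use of vertex-transitivity to begin each coset's traversal at the vertex into which the incoming $c$-edge delivers us — this is precisely what lets the coset-by-coset traversals be concatenated with no compatibility requirement on where the individual paths end. The remaining care is routine bookkeeping: verifying that the $T$-labelled edges inside a coset really do realize a conjugated Cayley digraph on $M$, and that following a $c$-edge never re-enters an already-visited coset before the traversal is finished.
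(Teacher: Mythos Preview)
Your argument is correct, and the core idea --- that $[G,G]\subseteq N$ forces $N\normal G$ with $G/N$ abelian, after which one traverses the cosets of~$N$ one at a time using a transported copy of the given hamiltonian path inside each coset --- is the same as the paper's. The organization differs, though. Rather than inducting on $|S\setminus T|$ through a chain of cyclic extensions, the paper invokes in one stroke the fact (\cref{abelianpath}) that every connected Cayley digraph on an abelian group has a hamiltonian path, obtaining a hamiltonian path $(s_i)_{i=1}^m$ in $\Cayd(G/N;S)$ directly; it then writes down the explicit concatenation
\[
\Bigl( \bigl( (t_j)_{j=1}^n,\, s_i \bigr)_{i=1}^m ,\ (t_j)_{j=1}^n \Bigr),
\]
reusing the \emph{same} label sequence $(t_j)$ in every coset (which works because each $t_j\in\{a,b\}\subseteq N$ and left multiplication is a digraph automorphism, so $[g](t_j)_{j=1}^n$ traverses $gN$ for any~$g$). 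Your route is more self-contained --- you are effectively re-proving \cref{abelianpath} inside the induction --- and your intermediate statement for arbitrary $M\supseteq[G,G]$ is a mild generalization; the paper's direct appeal to \cref{abelianpath} is more economical and avoids the need to argue separately about conjugated generating sets inside each coset.
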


\begin{proof}
Since $[G,G] \subseteq N$, we know that $G/N$ is an abelian group, so there is a hamiltonian path $(s_i)_{i=1}^m$ in $\Cayd(G/N;S)$ \cseebelow{abelianpath}.
Also, by assumption, there is a hamiltonian path $(t_j)_{j=1}^n$ in $\Cayd(N;a,b)$. Then 
	$$ \Bigl( \bigl( (t_j)_{j=1}^n, s_i \bigr)_{i=1}^m , (t_j)_{j=1}^n \Bigr)$$
is a hamiltonian path in $\Cayd(G;S)$.\refnote{MayAssumeS=2Aid}
\end{proof}

\begin{defn}
If $K$ is a subgroup of~$G$, then $K \backslash {\Cayd(G;S)}$ denotes the digraph whose vertices are the right cosets of~$K$ in~$G$, and with a directed edge $Kg \to Kgs$ for each $g \in G$ and $s \in S$. Note that $K \backslash {\Cayd(G;S)} = \Cayd(G/K; S)$ if $K \normal G$.
\end{defn}

\begin{lem}[``Skewed-Generators Argument,'' {cf.\ \cite[Lem.~2.6]{Morris-2gen}, \cite[Lem.~5.1]{Witte-pgrp}}] \label{SkewedGens}
Assume:
\noprelistbreak
	\begin{itemize}
	\item $S$ is a generating set for the group~$G$,
	\item $K$ is a subgroup of~$G$, such that every connected Cayley digraph on~$K$ has a hamiltonian path,
	\item $(s_i)_{i=1}^n$ is a hamiltonian cycle in $K \backslash {\Cayd(G;S)}$,
	and
	\item $\langle S s_2 s_3 \cdots s_n \rangle = K$.
	\end{itemize}
Then\/ $\Cayd(G;S)$ has a hamiltonian path.
\end{lem}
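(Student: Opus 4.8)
The plan is to lift a hamiltonian path of a Cayley digraph on~$K$ to a hamiltonian path of $\Cayd(G;S)$: each step of the path on~$K$ is to be ``inflated'' into a loop that goes once around the given hamiltonian cycle of $K \backslash {\Cayd(G;S)}$, and a single short preliminary segment is to be prepended to pick up the vertices that those loops fail to reach. The hypothesis $\langle S s_2 s_3 \cdots s_n \rangle = K$ is exactly what makes such loops available, so that is where I would begin.

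The first step is to extract the content of that hypothesis, namely that $Ks = Ks_1$ for \emph{every} $s \in S$. Writing $c = s_1 s_2 \cdots s_n$, we have $c \in S s_2 s_3 \cdots s_n \subseteq K$, while for an arbitrary $s \in S$ the element $s s_2 \cdots s_n$ also lies in~$K$, so $s s_1^{-1} = (s s_2 \cdots s_n)\, c^{-1} \in K$. In particular $S_0 := S s_2 s_3 \cdots s_n$ is contained in $Kc = K$, so $\Cayd(K; S_0)$ is a connected Cayley digraph on~$K$ (delete $e$ from $S_0$ first if it occurs; this does not change $\langle S_0 \rangle$). By hypothesis $\Cayd(K; S_0)$ has a hamiltonian path, which we may take to start at~$e$ since Cayley digraphs are vertex-transitive; write it as $(t_\ell)_{\ell=1}^{|K|-1}$, choose $u_\ell \in S$ with $t_\ell = u_\ell s_2 s_3 \cdots s_n$, and put $w_\ell = t_1 t_2 \cdots t_{\ell-1}$, so that $w_1 = e$ and $w_1, w_2, \ldots, w_{|K|}$ enumerates~$K$.

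The hamiltonian path of $\Cayd(G;S)$ I would then exhibit is
	$$ W \ = \ \Bigl[\, (s_2 s_3 \cdots s_n)^{-1} \,\Bigr] \Bigl(\, (s_i)_{i=2}^n ,\ \bigl(\, u_\ell,\, (s_i)_{i=2}^n \,\bigr)_{\ell=1}^{|K|-1} \,\Bigr) , $$
and its analysis rests on two facts. First, the $\ell$-th block $\bigl[ w_\ell \bigr]\bigl( u_\ell,\, (s_i)_{i=2}^n \bigr)$ runs from $w_\ell \in K$ to $w_\ell u_\ell s_2 \cdots s_n = w_\ell t_\ell = w_{\ell+1} \in K$ and, because $K u_\ell = K s_1$, projects onto the given hamiltonian cycle of $K \backslash {\Cayd(G;S)}$ traversed from~$K$; so its $n-1$ interior vertices land one in each coset other than~$K$. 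Consecutive blocks meet only at the junctions $w_2, \ldots, w_{|K|-1}$, so the blocks together pass through $w_1, \ldots, w_{|K|}$ (all of~$K$) and through $|K|-1$ of the $|K|$ vertices of each of the other $n-1$ cosets. Second, the preliminary segment $\bigl[ (s_2 \cdots s_n)^{-1} \bigr](s_i)_{i=2}^n$ telescopes, visiting $(s_2 \cdots s_n)^{-1},\, (s_3 \cdots s_n)^{-1},\, \ldots,\, s_n^{-1},\, e$, which lie respectively in $K s_1,\, K s_1 s_2,\, \ldots,\, K s_1 \cdots s_{n-1},\, K$, and it ends at $e = w_1$, the start of the first block.

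The step I expect to be the real obstacle — the one where the construction has to be exactly right — is checking that the preliminary segment supplies precisely the vertices the blocks miss, so that $W$ has no repeated vertex. For this one computes that, in the coset $K s_1 \cdots s_i$ with $1 \le i \le n-1$, the $\ell$-th block contributes the single vertex $w_\ell u_\ell s_2 \cdots s_i = w_{\ell+1}\,(s_{i+1} \cdots s_n)^{-1}$ (using $w_\ell u_\ell s_2 \cdots s_n = w_{\ell+1}$); as $\ell$ ranges over $1, \ldots, |K|-1$ the factor $w_{\ell+1}$ ranges over $K \setminus \{e\}$, so these are all of that coset except the vertex $(s_{i+1} \cdots s_n)^{-1}$ — which is exactly what the preliminary segment places there — and no two of them coincide. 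In the coset~$K$ the only vertex of the preliminary segment is its endpoint $e = w_1$, the junction with the first block. Hence $W$ visits each of the $n|K| = |G|$ vertices exactly once, and $\Cayd(G;S)$ has a hamiltonian path.
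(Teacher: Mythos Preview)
Your argument is correct and is essentially the paper's own proof: both take a hamiltonian path of $\Cayd(K;\,S s_2\cdots s_n)$ and inflate each of its edges $u_\ell$ into the walk $(u_\ell, s_2,\ldots,s_n)$ that goes once around the quotient cycle, with one short extra segment to collect the $n-1$ leftover vertices. The only difference is cosmetic --- the paper starts its path at~$e$ and puts the truncated block at the end, while you start at $(s_2\cdots s_n)^{-1}$ and put it at the beginning --- and your coset-by-coset bookkeeping is more explicit than what the paper records.
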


\begin{proof}
Since $\langle S s_2 s_3 \cdots s_n \rangle = K$, we know that $\Cayd(K; S s_2 s_3 \cdots s_n)$ is connected, so, by assumption, it has a hamiltonian path $(t_j s_2 s_3 \cdots s_n)_{j=1}^m$. Then
	$$ \Bigl( \bigl( t_j , (s_i)_{i=2}^n \bigr)_{j=1}^{m-1} , t_m, (s_i)_{i=2}^{n-1} \Bigr) $$
is a hamiltonian path in $\Cayd(G;S)$.\refnote{SkewedGensAid}
\end{proof}

\begin{lem} \label{GGPrimeHG}
Assume:
\noprelistbreak
	\begin{itemize}
	\item $S$ is a generating set of~$G$, with arc-forcing subgroup $H = \langle S S^{-1} \rangle$, 
	\item there is a hamiltonian path in every connected Cayley digraph on $H^G$,
	and
	\item either $H = H^G$, or $H$~is contained in a \underline{unique} maximal subgroup of~$H^G$.
	\end{itemize}
Then\/ $\Cayd(G;S)$ has a hamiltonian path.
\end{lem}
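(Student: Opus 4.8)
The goal is to reduce a hamiltonian path problem on $\Cayd(G;S)$ to one on a Cayley digraph on the normal closure $H^G$, where we already know hamiltonian paths exist. Since $H \normal H^G$ and $H^G \normal G$, the quotient $G/H^G$ is abelian (because $H^G$ contains $SS^{-1}$, hence all commutators of elements of~$S$... wait, let me think about whether $G/H^G$ is actually abelian — $H = \langle SS^{-1}\rangle$ and modulo $H$ all elements of $S$ are equal, so $G/H$ is cyclic, hence $G/H^G$ is a quotient of a cyclic group, so cyclic, hence abelian). So there is a hamiltonian path in $\Cayd(G/H^G; S)$; in fact since the quotient is cyclic and all generators coincide there, it's just a single cycle minus an edge. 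The idea is to lift this to $\Cayd(G;S)$ by splicing in, at each vertex of the quotient path, a hamiltonian path through the corresponding coset of~$H^G$. For this to work I need a hamiltonian path in $\Cayd(H^G; T)$ for an appropriate generating set~$T$ of $H^G$ obtained by "skewing" the generators of~$S$ by the word that traverses the quotient cycle.

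**Key steps, in order.** First, establish that $G/H^G$ is cyclic, so $K \backslash \Cayd(G;S)$ with $K = H^G$ has a hamiltonian cycle $(s_i)_{i=1}^n$ (each $s_i \in S$). Second, I want to apply the Skewed-Generators Argument (\cref{SkewedGens}) with $K = H^G$: its hypotheses are that every connected Cayley digraph on $H^G$ has a hamiltonian path (given), that $(s_i)_{i=1}^n$ is a hamiltonian cycle in $H^G \backslash \Cayd(G;S)$ (just established), and crucially that $\langle S s_2 s_3 \cdots s_n \rangle = H^G$. So the whole proof comes down to verifying this last generation condition. Third, to verify it: note $s_2 s_3 \cdots s_n = s_1^{-1}(s_1 s_2 \cdots s_n)$, and $s_1 s_2 \cdots s_n \in H^G$ since the cycle closes up. Write $w = s_1 s_2 \cdots s_n \in H^G$. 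Then $S s_2 s_3 \cdots s_n = S s_1^{-1} w$, and $\langle S s_1^{-1} w \rangle$ — since $w \in H^G$ and $\langle S s_1^{-1}\rangle = \langle S a^{-1}\rangle = H$ by \fullcref{ArcForcingRems}{Sg} (taking $a = s_1$) — this subgroup contains... hmm, I need to be careful: $\langle Xw\rangle$ for $w$ not necessarily in $\langle X\rangle$ is not simply related to $\langle X\rangle$. Let me instead argue: $\langle S s_2\cdots s_n\rangle$ contains all products $(s_i s_2\cdots s_n)(s_j s_2 \cdots s_n)^{-1} = s_i s_2 \cdots s_n (s_2\cdots s_n)^{-1} s_j^{-1} = s_i s_j^{-1}$, so it contains $SS^{-1}$, hence $H$. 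Also it contains $s_1 \cdot s_2 \cdots s_n = w$... no wait, it contains $s_1 (s_2 \cdots s_n)$ only if $s_1 \in S$, which it is, so yes $w \in \langle S s_2\cdots s_n\rangle$. So $\langle S s_2\cdots s_n\rangle \supseteq \langle H \cup \{w\}\rangle$, and conversely every generator $s_i s_2\cdots s_n = s_i s_1^{-1} \cdot w \in H \cdot w \cdot(\text{correction})$ — since $s_i s_1^{-1} \in H$ and $w = s_1 s_2\cdots s_n$ so $s_i s_2 \cdots s_n = (s_i s_1^{-1}) w \in \langle H, w\rangle$. Hence $\langle S s_2\cdots s_n\rangle = \langle H, w\rangle = \langle H \cup \{w\}\rangle$. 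Fourth, show $\langle H \cup \{w\}\rangle = H^G$. This is where the hypothesis "$H = H^G$ or $H$ lies in a unique maximal subgroup of $H^G$" enters. The subgroup $\langle H, w\rangle$ contains $H$; I claim it is normal in $H^G$ — actually I should show it equals $H^G$ directly. If $H = H^G$ we are done since $w \in H^G$. Otherwise, $\langle H, w\rangle$ is a subgroup of $H^G$ containing $H$; if it were proper it would lie inside some maximal subgroup $M$ of $H^G$, and by uniqueness $M$ is the unique maximal subgroup containing $H$. But then I need $w \notin M$. Here is the point: $w$ runs over a coset; more precisely, $H^G$ is generated by $H$ together with all its conjugates $H^g$, but actually I should use that $H^G/\text{(core)}$... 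Let me reconsider. The cleanest route: $H^G$ is generated by the conjugates of $H$. The word $w = s_1\cdots s_n$ — as we vary the starting point of the hamiltonian cycle (i.e., cyclically permuting), $w$ gets conjugated, so actually all cyclic rotations of the product lie in $\langle H, w\rangle$ after conjugation by elements realizing the rotation... this is getting delicate.

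**Main obstacle.** The crux — and the step I expect to be genuinely hard — is showing $\langle H, w\rangle = H^G$, i.e., that the "skew element" $w$ together with $H$ generates all of $H^G$. The unique-maximal-subgroup hypothesis is clearly designed exactly for this: it forces $\langle H, w\rangle$ to equal $H^G$ as soon as $w$ avoids the unique maximal overgroup $M$ of $H$. So the real content is proving $w \notin M$. I expect this to follow from the observation that $M \normal H^G$ would be forced (a subgroup containing $H$ and lying in the unique maximal subgroup $M$, with $M$ unique, makes $M$ invariant under any automorphism fixing $H$ setwise; since $H \normal H^G$, conjugation by $H^G$ permutes... hmm, conjugation by $G$ normalizes $H^G$ and, since $H \normal H^G$, conjugation by $H^G$ fixes nothing automatically, but conjugation by $G$ sends $H$ to a conjugate, not back to $H$). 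Alternative: if $M \normal H^G$ then $H^G/M$ is a nontrivial group generated by the image of $S$ which is cyclic, and one shows the image of $w$ generates it — because $w \equiv s_1^n \pmod{H}$-ish and the quotient cycle had length $n = [G:H^G]$... Actually, modulo $M$, all elements of $S$ are congruent (since $SS^{-1} \subseteq H \subseteq M$), say to $\bar s$, so $w \equiv \bar s^{\,n} \pmod M$ where $n = [H^G \backslash G : \text{1}] = [G : H^G]$, and since the quotient by $M$ is a quotient of $G/H$ which is cyclic of order $[G:H]$, while $[G:H] = [G:H^G]\cdot[H^G:H]$... I'd want $\bar s^n$ to generate $H^G/M$, equivalently $\gcd(n, [H^G:M]) $ business — this should work out because $M$ is maximal so $[H^G:M]$ is prime $q$, and $\bar s$ has order $[G:M] = [G:H^G]\cdot q = nq$ in the cyclic group $G/M$, so $\bar s^n$ has order $q$, hence generates $H^G/M$, hence $w \notin M$. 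That resolves it when $M \normal H^G$; the remaining worry is a non-normal unique maximal subgroup, but I suspect one argues $\langle H, w\rangle$ contains enough conjugates of $H$ (via the structure of the cycle) to be normal anyway, or that uniqueness of $M$ over $H$ together with $H \normal H^G$ forces $M \normal H^G$. I would write the proof by first disposing of the case $H = H^G$, then in the main case setting up $w$ and reducing, via \cref{SkewedGens}, to $\langle H, w\rangle = H^G$, and finally running the prime-index argument above against the unique maximal subgroup.
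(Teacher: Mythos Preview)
Your overall architecture matches the paper's: reduce to \cref{SkewedGens} with $K = H^G$, observe that $G/H^G$ is cyclic so hamiltonian cycles exist, and show $\langle S s_2\cdots s_n\rangle = \langle H, w\rangle$ where $w = s_1\cdots s_n$; then the problem becomes showing $w \notin M$ for the unique maximal subgroup $M$ of $H^G$ containing~$H$. All of that is correct and is exactly what the paper does.

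The gap is in your argument that $w \notin M$. Your computation ``$w \equiv \bar s^{\,n} \pmod M$'' and the subsequent order count in ``$G/M$'' presuppose that $M \normal G$, which you never establish. Your proposed route to normality rests on the claim $H \normal H^G$, but that is false in general: $H^G$ is the smallest normal subgroup of~$G$ containing~$H$, not the smallest normal subgroup of~$H^G$ containing~$H$, so there is no reason for $H$ to be normal in~$H^G$. Without normality of~$M$, right cosets $Mg$ do not behave multiplicatively, different choices of $s_i$'s can lead to different walks in $M\backslash G$, and the ``$\bar s^{\,n}$'' formula collapses. (Also, without $M \normal H^G$ the index $[H^G:M]$ need not be prime.) You correctly flag this as ``the remaining worry,'' but the suggested fixes do not work.

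The paper sidesteps normality entirely by a comparison trick: it does not try to prove $w \notin M$ for a single cycle, but instead exhibits \emph{two} hamiltonian cycles whose products cannot both lie in~$M$. Since $H^G$ is generated by conjugates of $S^{-1}S$, there exist $a,b,c \in S$ with $(a^{-1}b)^c \notin M$. The cycles $(a^{n-1},c)$ and $(a^{n-2},b,c)$ (both hamiltonian in $\Cayd(G/H^G;S)$ because all elements of~$S$ coincide modulo~$H^G$) have products $a^{n-1}c$ and $a^{n-2}bc$, and
\[
(a^{n-1}c)^{-1}(a^{n-2}bc) = c^{-1}a^{-1}bc = (a^{-1}b)^c \notin M,
\]
so at least one of the two products avoids~$M$. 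That cycle then satisfies $\langle H, w\rangle = H^G$. This is the missing idea: rather than forcing normality, vary the cycle so that the products differ by a prescribed element outside~$M$.
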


\begin{proof}
It suffices to show
	\begin{align} \tag{$*$} \label{Ssss=HG}
	\begin{matrix}
	 \text{there exists a hamiltonian cycle $(s_i)_{i=1}^n$ in $\Cayd(G/H^G; S)$,}
	 \\[3pt] \text{such that $H^G = \langle Ss_2 \cdots s_n \rangle$}
	 , \end{matrix}
	 \end{align}
for then \cref{SkewedGens} provides the desired hamiltonian path in $\Cayd(G;S)$.

If $H^G = H$, then every hamiltonian cycle in $\Cayd(G/H^G; S)$ satisfies \pref{Ssss=HG} \fullcsee{ArcForcingRems}{Sg}. Thus, we may assume $H^G \neq H$, so, by assumption, $H$ is contained in a unique maximal subgroup~$M$ of~$H^G$. 
Since $H^G$ is generated by conjugates of~$S^{-1}S$ \fullcsee{ArcForcingRems}{conj}, there exist $a,b,c \in S$, such that $(a^{-1}b)^c \notin M$.

We may also assume $H^G \neq G$ (since, by assumption, every Cayley digraph on~$H^G$ has a hamiltonian path), so, letting $n = |G:H^G| \ge 2$, we have the two hamiltonian cycles $(a^{n-1},c)$ and $(a^{n-2},b,c)$ in $\Cayd(G/H^G; S)$.\refnote{GGPrimeHGan}
 Since 
	$$(a^{n-1} c)^{-1} (a^{n-2}bc) = (a^{-1}b)^c \notin M ,$$
the two products $a^{n-1} c$ and $a^{n-2}bc$ cannot both belong to~$M$. Hence, either $(a^{n-1},c)$ or $(a^{n-2},b,c)$ is a hamiltonian cycle $(s_i)_{i=1}^n$  in $\Cayd(G/H^G; S)$, such that $s_1s_2 \cdots s_n \notin M$. Since $M$ is the unique maximal subgroup of~$H^G$ that contains~$H$, this implies\refnotelower0{GGPrimeHGM}
	$$ H^G = \langle H, s_1s_2 \cdots s_n \rangle 
	= \langle S s_2 s_3 \cdots s_n \rangle ,
	$$
as desired.
\end{proof}

The final hypothesis of the preceding 
\lcnamecref{GGPrimeHG} 
is automatically satisfied when $[G,G]$ is cyclic of prime-power order:

\begin{lem} \label{UniqueMax}
If\/ $[G,G]$ is cyclic of order~$p^k$, where $p$~is prime, and $H$ is any subgroup of~$G$, then either $H = H^G$, or $H$~is contained in a unique maximal subgroup of~$H^G$.
\end{lem}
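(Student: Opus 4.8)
The plan is to analyze the structure of $H^G$ directly. Since $[G,G]$ is cyclic of prime-power order $p^k$, every subgroup of $[G,G]$ is of the form $[G,G]^{p^i}$ for some $i$, and in particular the subgroups of $[G,G]$ are totally ordered by inclusion. I would first observe that $H^G = H \cdot (H \cap [G,G])^G$ is not quite the right decomposition; instead, the key point is that $H^G / H_0$ is abelian, where $H_0$ is the core, but more usefully: $H^G = \langle H, [H,G] \rangle$ and $[H,G] \subseteq [G,G]$, so $[H,G]$ is one of the finitely many subgroups $[G,G]^{p^i}$ of the cyclic $p$-group $[G,G]$. Thus $H^G = H \cdot [H,G]$ with $[H,G] \normal G$ a cyclic $p$-group lying in the center-modulo-$H$ sense—precisely, $[H,G] \normal H^G$ and $H^G/[H,G]$ is a quotient of $G/[G,G]$ restricted appropriately, hence $H \normal H^G$ with $H^G/H$ abelian. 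Wait: since $H^G \normal G$ and $H \subseteq H^G$, and $H^G = H[H,G]$ while $[H,G]\subseteq[G,G]\subseteq$ the relevant cyclic group, one checks $H \normal H^G$ and $H^G/H \cong [H,G]/(H\cap[H,G])$ is cyclic of $p$-power order.

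Given that $H \normal H^G$ with cyclic $p$-power quotient $H^G/H$, the conclusion is immediate: if $H \ne H^G$, then $H^G/H$ is a nontrivial cyclic $p$-group, which has a \emph{unique} maximal subgroup (the subgroup of index~$p$). Its preimage $M$ in $H^G$ is then the unique maximal subgroup of $H^G$ containing $H$: any maximal subgroup $M'$ of $H^G$ with $H \subseteq M'$ corresponds to a maximal subgroup of $H^G/H$, and there is only one such. So the whole argument reduces to establishing the two claims (i) $H \normal H^G$ and (ii) $H^G/H$ is a cyclic $p$-group.

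For (i): $H^G$ is generated by $H$ together with all commutators $[h,g]$ with $h \in H$, $g \in G$, and each such commutator lies in $[G,G]$. Since $[G,G]$ is abelian (being cyclic), the subgroup $[H,G] := \langle [h,g] : h\in H, g\in G\rangle$ is normal in $G$ (it is generated by a conjugation-invariant set, as $[h,g]^x = [h^x, g^x]$ and $h^x \in H^G$... — here a small care is needed, so I would instead note $[H,G]\normal G$ by the standard commutator identities using $H^G \normal G$). Then $H^G = H[H,G]$, and since $[H,G]\normal G$ we get $H[H,G]$ is a group with $H \normal H[H,G] = H^G$ because $[H,G]$ normalizes $H$: indeed $h^g = h[h,g] \in H[H,G]$ and conjugation by elements of $[H,G]\subseteq[G,G]$ sends $H$ into $H[H,[G,G]] \subseteq H[H,G]$—again totally ordered subgroups of the cyclic $[G,G]$ make this transparent. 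For (ii): $H^G/H \cong [H,G]/([H,G]\cap H)$ is a quotient of the subgroup $[H,G]$ of the cyclic $p$-group $[G,G]$, hence cyclic of $p$-power order.

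The main obstacle is item (i)—verifying cleanly that $H$ is normal in $H^G$. The cyclicity of $[G,G]$ is doing the real work here (it forces $[H,G]$ to be a fully characteristic, well-ordered chain of subgroups and makes $[H,G]$ normalize $H$), and one must be slightly careful that the relevant commutator subgroups are normal in $G$, not merely in $H^G$; the commutator identity $[h,g]^x = [h^x,g^x][h^x,g,x]^{-1}\cdots$ together with $[G,G]$ abelian simplifies this to $[h,g]^x = [h^x, g^x]$ inside $[G,G]$, and $h^x \in H^G$ needs only $H^G$'s own normality, which we have by construction. Everything else is bookkeeping with chains of subgroups in a cyclic $p$-group.
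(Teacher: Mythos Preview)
Your central claim (i), that $H \normal H^G$, is false in general, and this is where the argument breaks. Take $G = S_3$, so $[G,G] = A_3 \cong \integer_3$ is cyclic of prime order, and let $H = \langle (1\,2) \rangle$. Then $H^G = S_3$, but $H$ is certainly not normal in~$S_3$. (The lemma's conclusion still holds here: the only maximal subgroup of $S_3$ containing $H$ is $H$ itself.) Your attempted justification of~(i) goes astray at ``$[H,G]$ normalizes $H$'': for $c \in [H,G] \subseteq [G,G]$ and $h \in H$ you get $h^c = h[h,c]$, but $[h,c]$ lies only in $[G,G]$, not in~$H$, so this does not show $H^c \subseteq H$.

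Your preliminary observations are correct and useful: $[H,G] \normal G$, $H^G = H\,[H,G]$, and $[H,G]$ is a cyclic $p$-group. What you are missing is that normality of~$H$ is unnecessary. The paper argues directly with Dedekind's modular law: since $H \subseteq M \subsetneq H^G \subseteq H\,[G,G]$ (and $H\,[G,G]$ is a group because $[G,G] \normal G$), one has $M = H \cdot (M \cap [G,G])$; then $M \cap [G,G]$ is a \emph{proper} subgroup of the cyclic $p$-group $H^G \cap [G,G]$, hence contained in $(H^G \cap [G,G])^p$, giving $M \subseteq H \cdot (H^G \cap [G,G])^p$. This identifies the unique maximal subgroup without ever forming the quotient $H^G/H$. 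The same argument works verbatim with your $[H,G]$ in place of $[G,G]$, so your setup can be salvaged by dropping claim~(i) and running the modular-law step instead.
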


\begin{proof}
Note that the normal closure $H^G$ is the (unique) smallest normal subgroup of~$G$ that contains~$H$.\refnote{UniqueMaxClosure} Therefore $H^G \subseteq H \, [G,G]$ (since $H \, [G,G]$ is normal in~$G$). This implies that if $M$ is any proper subgroup of~$H^G$ that contains~$H$, then\refnotelower0{UniqueMaxHK}
	$$M = H \cdot \bigl( M \cap [G,G] \bigr) \subseteq H \cdot \bigl( H^G \cap [G,G] \bigr)^p .$$
Therefore, $H \cdot \bigl( H^G \cap [G,G] \bigr)^p$ is the unique maximal subgroup of~$H^G$ that contains~$M$.
\end{proof}

The following known result handles the case where $G$ is nilpotent:

\begin{thm}[Morris \cite{Morris-2gen}] \label{2gen}
Assume $G$ is nilpotent, and $S$ generates~$G$. If either
\noprelistbreak
	\begin{enumerate}
	\item \label{2gen-2}
	$\#S \le 2$,
	or
	\item\label{2gen-prime}
	$|[G,G]| = p^k$, where $p$~is prime and $k \in \natural$,
	\end{enumerate}
then $\Cayd(G;S)$ has a hamiltonian path.
\end{thm}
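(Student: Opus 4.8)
The statement has two faces — in part~(1) the number of generators is bounded but $[G,G]$ is unconstrained, while in part~(2) it is the reverse — so I would prove it as a single induction on~$|G|$ driven by a central quotient, with the two hypotheses serving as two ways of keeping the inductive step under control. Let $G$ be a minimal counterexample. Since every connected Cayley digraph on an abelian group has a hamiltonian path, $G$ is nonabelian, so $[G,G]\neq 1$; as $G$ is nilpotent, $[G,G]\cap Z(G)\neq 1$, and I fix a subgroup $Z\normal G$ with $Z\le [G,G]\cap Z(G)$ and $|Z|=p$, taking $p$ to be any prime dividing $|[G,G]|$ in case~(1) and the given prime in case~(2). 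Passing to $\bar G=G/Z$, the hypotheses persist: in case~(1), $\bar G$ is again generated by $\le 2$ elements and is nilpotent; in case~(2), $|[\bar G,\bar G]|=p^{\,k-1}$. Hence, by induction (the base being the known abelian case), $\Cayd(\bar G;\bar S)$ has a hamiltonian path.

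The decisive step is the lift. Because $Z$ is central of prime order~$p$, it is enough to produce a hamiltonian \emph{cycle} $\bar C=(\bar s_i)_{i=1}^n$ in $\Cayd(\bar G;\bar S)$ whose edge-label product $s_1 s_2\cdots s_n$ — which, lifted back to~$G$, automatically lies in~$Z$ — actually \emph{generates}~$Z$: traversing $\bar C$ exactly $p$ times then sweeps out all of~$G$, yielding a hamiltonian cycle of $\Cayd(G;S)$ and contradicting minimality. Recognizing the required $\bar C$ as exactly what the skewed-generators argument asks for, I would invoke \cref{SkewedGens} — in the packaged form \cref{GGPrimeHG}, whose final hypothesis is automatic by \cref{UniqueMax} once $[\bar G,\bar G]$ is known to be cyclic — to manufacture the hamiltonian object from a hamiltonian cycle in $\Cayd(\bar G/\bar H^{\bar G};\bar S)$ together with a hamiltonian path on $\bar H^{\bar G}$ supplied by induction. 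In case~(2) one should also use nilpotence globally: $G=P\times A$ with $P$ the Sylow $p$-subgroup (so $[G,G]=[P,P]$) and $A$ abelian; when $[P,P]$ is cyclic this lets \cref{MayAssumeS=2} and \cref{GGPrimeHG} reduce matters to the two-generator case~(1), while in general $[G,G]$ need not be cyclic (e.g.\ for a group of exponent~$p$ and class~$2$ with three generators) and one runs the central-quotient induction directly.

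Case~(1), with $S=\{a,b\}$, is where the lift can be carried out by hand, because \cref{HousmanThm} rigidly constrains every hamiltonian path of $\Cayd(\bar G;\bar a,\bar b)$: by \fullcref{HousmanThm}{terminal} its terminal vertex lies in the terminal coset $\bar a^{-1}\bar H$ of the arc-forcing subgroup $\bar H=\langle \bar a\bar b^{-1}\rangle$, and by \fullcref{HousmanThm}{regular} each regular coset of~$\bar H$ travels wholly by~$\bar a$ or wholly by~$\bar b$. Using this I would perform a bounded amount of surgery on the inductive hamiltonian path — rerouting a few edges within $\bar H$-cosets — so that it simultaneously closes up to a hamiltonian cycle~$\bar C$ and has lifted edge-label product generating~$Z$. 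A handful of subcases appear, sorted by whether $\bar H=\bar G$, whether $\bar a\bar b^{-1}$ meets~$Z$, and whether $\bar H^{\bar G}=\bar G$, but each is dispatched by the same rerouting idea.

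The main obstacle is precisely this surgery: forcing a chosen hamiltonian cycle of the quotient to stay hamiltonian while its lifted boundary word is coerced into generating the central~$\integer_p$. Control of that word demands modifications of the cycle, and such modifications are not always available — which is exactly why the conclusion fails once one leaves the nilpotent/prime-power regime (contrast \cref{23NoHP} and \cref{NewNonTraceable}). Nilpotence is what keeps the scheme viable: it lets $Z$ be chosen inside $[G,G]\cap Z(G)$, so that $\bar G$ stays within the inductive hypothesis and its arc-forcing subgroup remains small enough for \cref{HousmanThm}, \cref{GGPrimeHG} and \cref{UniqueMax} to bite. Executing the surgery in all cases is the technical heart of \cite{Morris-2gen}, and that is where essentially all of the effort goes.
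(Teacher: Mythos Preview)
The paper does not prove this theorem; it is quoted from \cite{Morris-2gen} as a known result and invoked as a black box in the proof of \cref{OnlyInvertHP}. There is therefore no proof in the present paper to compare your proposal against.

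For what it is worth, your outline is a plausible sketch of the strategy one expects in \cite{Morris-2gen}: induction on~$|G|$ via a central subgroup $Z\le [G,G]\cap Z(G)$ of prime order, lifting a hamiltonian cycle from $G/Z$ by forcing its edge-label product to generate~$Z$, with Housman's \cref{HousmanThm} constraining the two-generator case. You correctly flag that the surgery step --- rerouting the quotient cycle so that its lifted boundary word hits a generator of~$Z$ --- is where the genuine difficulty lies, and you do not carry it out. So what you have written is a roadmap, not a proof; but since the paper itself offers only a citation, there is nothing further to adjudicate here.
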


We now state the main result of this \lcnamecref{VerySmallSect}:

\begin{thm} \label{OnlyInvertHP}
Suppose
\noprelistbreak
	\begin{itemize}
	\item $[G,G]$ is cyclic of prime-power order,
	and
	\item every element of~$G$ either centralizes\/ $[G,G]$ or inverts it.
	\end{itemize}
Then every connected Cayley digraph on~$G$ has a hamiltonian path.
\end{thm}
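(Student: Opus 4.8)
The plan is to induct on $|G|$. If $[G,G]$ is trivial then $G$ is abelian and the conclusion is standard (it is noted in the remarks), so assume $[G,G] \iso \integer_{p^k}$ with $k \ge 1$. First I would reduce to two generators. Since every conjugate of a commutator $[s,t]$ with $s,t \in S$ equals $[s,t]^{\pm 1}$ (each element of $G$ centralizes or inverts $[G,G]$), the cyclic $p$-group $[G,G]$ is generated by $\{\,[s,t] : s,t \in S\,\}$, so some $[a,b]$ with $a,b \in S$ already generates it. Put $N = \langle a,b\rangle$; then $[N,N] = [G,G]$ and $N$ inherits the hypotheses of the theorem, so if $N \neq G$ the inductive hypothesis handles $\Cayd(N;a,b)$ and \cref{MayAssumeS=2} lifts this to $\Cayd(G;S)$. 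Thus we may assume $G = \langle a,b\rangle$ and $[G,G] = \langle [a,b]\rangle \iso \integer_{p^k}$; write $H = \langle ab^{-1}\rangle$ for the arc-forcing subgroup.

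By \cref{UniqueMax}, the final hypothesis of \cref{GGPrimeHG} holds automatically. So if $H^G \neq G$, then $H^G$ has smaller order and inherits the hypotheses of the theorem, whence by induction every connected Cayley digraph on $H^G$ has a hamiltonian path, and \cref{GGPrimeHG} finishes. We may therefore assume $H^G = G$. If in addition every element of $G$ centralizes $[G,G]$, then $G$ is nilpotent of class at most $2$ and \fullcref{2gen}{prime} applies. In the remaining situation some generator, say $a$, inverts $[G,G]$; then $ab^{-1}$ centralizes $[G,G]$ whenever $b$ also inverts it, which would force $H^G = G$ to centralize $[G,G]$, contrary to assumption --- so $b$ centralizes $[G,G]$. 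This ``mixed'' case with $H^G = G$ is the crux.

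In that case one checks $H^G = H[G,G]$, so $H^G = G$ means $\overline{ab^{-1}}$ generates $G/[G,G]$; writing $u = ab^{-1}$ (which then inverts $[G,G]$) and computing commutators of $u$ with $[G,G]$ gives $[G,G] = [G,G]^2$, so $p$ is odd, $\langle u\rangle \cap [G,G] = 1$, and $G = \langle u\rangle \ltimes [G,G] \iso \integer_m \ltimes \integer_{p^k}$ with $u$ acting by inversion; moreover $b = u^{j}c^{t}$ and $a = u^{j+1}c^{t}$ for some even $j$, some $t$ coprime to $p$, and a generator $c$ of $[G,G]$. I would then set $K = \langle u\rangle \ltimes [G,G]^p = \langle u, c^p\rangle$, a proper subgroup with $[K,K] = [G,G]^p \iso \integer_{p^{k-1}}$; it too inherits the hypotheses of the theorem, so by induction every connected Cayley digraph on $K$ has a hamiltonian path. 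The quotient $K \backslash \Cayd(G;a,b)$ has $p$ vertices, indexed mod $p$ by the exponent of $c$, through which the $b$-edges run cyclically (because $p \nmid t$), so $(b,b,\dots,b)$ with $p$ entries is a hamiltonian cycle $(s_i)_{i=1}^{p}$ there; and $\langle S\,s_2 s_3 \cdots s_p\rangle = \langle a b^{p-1},\, b^{p}\rangle = \langle u,\, c^{p}\rangle = K$. Hence \cref{SkewedGens} produces a hamiltonian path in $\Cayd(G;a,b)$, completing the induction.

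The step I expect to be the main obstacle is precisely this last case: once $H^G = G$ the arc-forcing subgroup carries no information and \cref{GGPrimeHG} is inapplicable, so one is forced to recognize the rigid structure $\integer_m \ltimes \integer_{p^k}$ and to push a carefully chosen \emph{non-normal} subgroup $K$ through the Skewed-Generators \lcnamecref{SkewedGens}. The delicate points are verifying that $K$ still satisfies the hypotheses of the theorem (it does, its commutator subgroup being $\iso \integer_{p^{k-1}}$) and checking the generation condition $\langle S\,s_2 \cdots s_p\rangle = K$, in which the oddness of $p$ --- guaranteeing that the $p$-th power map does not collapse $[G,G]$ too far --- is exactly what is used.
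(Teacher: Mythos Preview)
Your argument is correct, and through the reduction to two generators, the handling of $H^G \neq G$ via \cref{GGPrimeHG} and \cref{UniqueMax}, and the nilpotent case, it matches the paper.  The genuine difference is in the ``mixed'' case with $H^G = G$.  There you swap the roles so that $a$ inverts and $b$ centralizes, derive the semidirect-product structure $G = \langle u\rangle \ltimes [G,G]$ (with $u = ab^{-1}$, $p$ odd, $|u|$ even), and then push the non-normal index-$p$ subgroup $K = \langle u\rangle \ltimes [G,G]^p$ through \cref{SkewedGens} using the hamiltonian cycle $(b^p)$ in $K\backslash\Cayd(G;a,b)$; the induction hypothesis is invoked once more because $[K,K] \cong \integer_{p^{k-1}}$.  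The paper instead keeps the centralizing generator as~$a$ and simply takes $K = H = \langle ab^{-1}\rangle$.  Since $G = H\,[G,G]$, one writes $a = \bar a z$ with $\bar a \in H$ and $z \in [G,G]$; because $a$ (hence $\bar a$) commutes with~$z$, one gets $Ha^i = Hz^i$, so $(a^n)$ with $n = |G:H|$ is already a hamiltonian cycle in $H\backslash\Cayd(G;a,b)$, and \cref{SkewedGens} applies with $K = H$.  The point is that $H$ is \emph{cyclic}, so ``every connected Cayley digraph on~$K$ has a hamiltonian path'' is immediate from the abelian case, with no further induction and no structural analysis of~$G$.  Your route works, but the paper's choice of $K$ short-circuits exactly the step you identified as the main obstacle.
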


\begin{proof}
Let $S$ be a generating set for~$G$. Write $[G,G] = \integer_{p^k}$ for some $p$ and~$k$. Since every minimal generating set of $\integer_{p^k}$ has only one element, there exist $a,b \in S$, such that $\langle [a,b] \rangle = [G,G]$.\refnote{OnlyInvertHPGG}
Then $\langle a,b \rangle$ is normal in~$G$ (since it contains $[G,G]$), so, by \cref{MayAssumeS=2}, we may assume $S = \{a,b\}$. 

Let $H = \langle ba^{-1} \rangle$ be the arc-forcing subgroup. 
We may assume $H^G = G$, for otherwise we could assume, by induction on~$|G|$, that every connected Cayley digraph on $H^G$ has a hamiltonian path, and then \cref{GGPrimeHG} would apply (since \cref{UniqueMax} verifies the remaining hypothesis).
So 
	$$H \, \integer_{p^k}  =  H \, [G,G] \supset H^G = G .$$

If $a$ and~$b$ both invert~$\integer_{p^k}$, then $H = \langle b a^{-1} \rangle$ centralizes $\integer_{p^k} = [G,G]$, so $G$ is nilpotent. Then \cref{2gen} applies. 

Therefore, we may now assume that $a$ does not invert~$\integer_{p^k}$. Then, by assumption, $a$ centralizes~$\integer_{p^k}$. Let $n = |G:H|$, and write $a = \ah z$, where $\ah \in H$ and $z \in \integer_{p^k}$. Then $a = \ah z \in H z$ and $b = (ba^{-1}) (\ah z) \in H z$. Since $\langle a,b \rangle = G$, this implies $H \langle z \rangle = G$.\refnote{OnlyInvertHPHz}
Therefore
	$$[H](a^n) = [H, Hz, Hz^2,\ldots,Hz^{n-1}, H]$$
is a hamiltonian cycle in $H \backslash {\Cayd(G;S)}$, so \cref{SkewedGens} applies. 
\end{proof}

\begin{cor} \label{GGle3}
If\/ $|[G,G]| \le 3$, or\/ $[G,G] \iso \integer_4$, then every connected Cayley digraph on~$G$ has a hamiltonian path.
\end{cor}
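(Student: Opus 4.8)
The plan is to obtain this as an immediate corollary of \cref{OnlyInvertHP}. When $[G,G]$ is trivial, $G$ is abelian, and then it is easy to see that every connected Cayley digraph on~$G$ has a hamiltonian path \csee{NonhamAbelian}. So I may assume $[G,G]$ is one of $\integer_2$, $\integer_3$, or $\integer_4$ (every group of order at most~$3$ is cyclic, and $\integer_4$ has prime-power order $2^2$). In each case $[G,G]$ is cyclic of prime-power order, which is the first hypothesis of \cref{OnlyInvertHP}.

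To verify the second hypothesis, I would simply compute the relevant automorphism groups: $\operatorname{Aut}(\integer_2)$ is trivial, while $\operatorname{Aut}(\integer_3) \cong (\integer/3\integer)^\times$ and $\operatorname{Aut}(\integer_4) \cong (\integer/4\integer)^\times$ each have order~$2$. Moreover, in these last two cases the unique nontrivial automorphism is multiplication by $-1$ (because $2 \equiv -1 \pmod{3}$ and $3 \equiv -1 \pmod{4}$), i.e., it is inversion. Since the conjugation action of~$G$ on its normal subgroup $[G,G]$ is a homomorphism $G \to \operatorname{Aut}([G,G])$, and the image of that homomorphism can only contain the identity and (possibly) the inversion map, every element of~$G$ either centralizes $[G,G]$ or inverts it. Thus \cref{OnlyInvertHP} applies and gives the desired conclusion.

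There is essentially no obstacle here: all of the work is contained in \cref{OnlyInvertHP}, and the only thing to record is the elementary fact that, apart from the trivial case, $\integer_2$, $\integer_3$, and $\integer_4$ are exactly the cyclic groups of prime-power order whose automorphism group is generated by inversion — equivalently, those whose automorphism group has order at most~$2$. This also makes transparent why the statement singles out $\integer_4$ rather than every group of order~$4$: for $[G,G] \cong \integer_2 \times \integer_2$ the group $[G,G]$ is not cyclic (and $\operatorname{Aut}(\integer_2 \times \integer_2)$ has order~$6$), so \cref{OnlyInvertHP} does not apply, and indeed this is consistent with the counterexamples of order~$4$ and~$5$ alluded to in the remarks following \cref{NewNonTraceable}.
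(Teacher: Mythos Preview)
Your proof is correct and follows essentially the same approach as the paper: both simply observe that the only nontrivial automorphism of $\{e\}$, $\integer_2$, $\integer_3$, or $\integer_4$ is inversion, so \cref{OnlyInvertHP} applies. The paper compresses this into a single sentence (and does not bother to separate out the trivial case), whereas you spell out the automorphism-group computation explicitly, but the substance is identical.
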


\begin{proof}
\Cref{OnlyInvertHP} applies, because inversion is the only nontrivial automorphism of $\{e\}$, $\integer_2$, $\integer_3$, or~$\integer_4$.
\end{proof}

\begin{rem}[{}{\cite[p.~266]{HolsztynskiStrube}}]
In the statement of \cref{GGle3}, the assumption that $[G,G] \iso \integer_4$ cannot be replaced with the weaker assumption that $|[G,G]| = 4$. For a counterexample, let $G = A_4 \times \integer_2$. Then $|[G,G]| = 4$, but it can be shown without much difficulty that $\Cayd(G; a,b)$ does not have a hamiltonian path when $a = \bigl( (1\, 2)(3 \, 4), 1 \bigr)$ and $b = \bigl( (1\, 2 \, 3), 0 \bigr)$.\refnote{A4xZ2NoPath} 
\end{rem}

Here is a counterexample when $|[G,G]| = 5$:

\begin{eg} \label{G'=5NoHP}
Let $G = \integer_{12} \ltimes \integer_5 = \langle h \rangle \ltimes \langle z \rangle$, where $z^h = z^3$. Then $|[G,G]| = 5$, and the Cayley digraph $\Cayd(G; h^2z, h^3z)$ is connected, but does not have a hamiltonian path.
\end{eg}

\begin{proof}
A computer search can confirm the nonexistence of a hamiltonian path very quickly, but, for completeness, we provide a human-readable proof.

Let $a = h^2z = z^4 h^2$ and $b = h^3z = z^3 h^3$. The argument in \cref{TwoCosetsCase} of the proof of \cref{NewNonTraceable} shows that no more than one regular coset travels by~$a$ in any hamiltonian path. On the other hand, since a hamiltonian path cannot contain any cycle of the form $[g](b^4)$, we know that at least $\bigl\lfloor \bigl(|G| - 1 \bigr)/4 \bigr\rfloor = 14$ vertices must travel by~$a$. Since $|ab^{-1}| = 12 < 14$, this implies that some regular coset travels by~$a$. So exactly one regular coset travels by~$a$ in any hamiltonian path.

For $0 \le i \le 3$ and $0\le m \le 11$, let $L_{i,m}$ be the spanning subdigraph of $\Cayd(G; a,b)$ in which:
\noprelistbreak
	\begin{itemize}
	\item all vertices have outvalence~$1$, except $b^{-1}(a b^{-1})^m = z^4 h^{9-m}$, which has outvalence~$0$,
	\item the vertices in the regular coset $z^i H$ travel by~$a$,
	\item a vertex $b^{-1}h^{-j} = z^4 h^{9-j}$ in the terminal coset travels by~$a$ if $0 \le j < m$,
	and
	\item all other vertices travel by~$b$.
	\end{itemize}
An observation of D.\,Housman \cite[Lem.~6.4(b)]{CurranWitte} tells us that if $L$ is a hamiltonian path from $e$ to $b^{-1}(a b^{-1})^m$, in which $z^i H$ is the regular coset that travels by~$a$, then $L = L_{i,m}$.\refnote{TermCosetTravels} 
Thus, from the conclusion of the preceding paragraph, we see that every hamiltonian path (with initial vertex~$e$) must be equal to $L_{i,m}$, for some~$i$ and~$m$.

However, $L_{i,m}$ is not a (hamiltonian) path. More precisely, for each possible value of~$i$ and~$m$, the following list displays a cycle that is contained in~$L_{i,m}$:
\noprelistbreak
	\begin{itemize}
	
	\item  if $i = 0$ and $0 \le m \le 8$: 
		$$z^2 h^3 \bto z h^6 \bto z^3 h^9 \bto z^4 \bto z^2 h^3 ;$$
	\item if $i = 0$ and $9 \le m \le 11$: 
		$$h^2 \ato z h^4 \bto z^4 h^7 \ato z h^9 \bto z^2 \bto h^3 \ato z^2 h^5 \bto z^3 h^8 \bto z h^{11} \bto h^2 ;$$
		 
	\item if $i = 1$ and $0 \le m \le 7$: 
		$$h^4 \bto z^3 h^7 \bto z^2 h^{10} \bto z^4 h \bto h^4 ;$$  
	\item if $i = 1$ and $8 \le m \le 11$: 
		$$h \bto z h^4 \ato h^6 \bto z^2 h^9 \bto z^3 \bto z h^3 \ato z^3 h^5 \bto z^4 h^8 \ato z^3 h^{10} \bto h ;$$
		 
	\item if $i = 2$ and $0 \le m \le 9$: 
		$$h^5 \bto z h^8 \bto z^4 h^{11} \bto z^3 h^2 \bto h^5 ;$$   
	\item if $i = 2$ and $10 \le m \le 11$: 
		$$z^2 h^3 \ato z^4 h^5 \ato z^2 h^7 \ato z^4 h^9 \ato z^2 h^{11} \ato z^4 h \ato z^2 h^3 ; $$
		 
	\item if $i = 3$ and $0 \le m \le 10$: 
		$$h^7 \bto z^4 h^{10} \bto z h \bto z^2 h^4 \bto h^7 ;$$
	\item if $i = 3$ and $m = 11$: 
		$$z^3 h^2 \ato z^4 h^4 \ato z^3 h^6 \ato z^4 h^8 \ato z^3 h^{10} \ato z^4 \ato z^3 h^2 .$$		 
	 \end{itemize} 
Since $L_{i,m}$ is never a hamiltonian path, we conclude that $\Cayd(G;a,b)$ does not have a hamiltonian path.
\end{proof}

\section{Nonhamiltonian Cayley digraphs on abelian groups} \label{NonhamAbelian}

When $G$ is abelian, it is easy to find a hamiltonian path in $\Cayd(G;S)$:

\begin{prop}[{}{\cite[Thm.~3.1]{HolsztynskiStrube}}] \label{abelianpath}
Every connected Cayley digraph on any abelian group has a hamiltonian path.\refnote{abelianpathaid}
\end{prop}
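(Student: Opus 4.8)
The plan is to induct on~$|S|$. Write $S = \{s_1,\ldots,s_k\}$ and set $H = \langle s_1,\ldots,s_{k-1}\rangle$; if $k = 0$ then $G$ is trivial and the one-vertex path suffices, so assume $k \ge 1$. If $H = G$, then $\Cayd(G;s_1,\ldots,s_{k-1})$ is a connected spanning subdigraph of $\Cayd(G;S)$, and it has a hamiltonian path by the induction hypothesis (its generating set is smaller), which is then also a hamiltonian path in $\Cayd(G;S)$; so we may assume $H \subsetneq G$. The observation that makes the abelian case easy is that, since $s_1,\ldots,s_{k-1}$ already generate~$H$, the quotient $G/H$ is \emph{cyclic}, generated by the image~$Hs_k$ of~$s_k$. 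Put $n = |G:H| \ge 2$, so that $G$ is the disjoint union $H \sqcup Hs_k \sqcup \cdots \sqcup Hs_k^{n-1}$.

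The path is then built by chaining together translated copies of a path in~$H$. By the induction hypothesis, the connected digraph $\Cayd(H;s_1,\ldots,s_{k-1})$ has a hamiltonian path, and, as Cayley digraphs are vertex-transitive, we may take its initial vertex to be~$e$; write it $(c_i)_{i=1}^m$ with $m = |H| - 1$, so that the partial products $c_1c_2\cdots c_j$ (for $0 \le j \le m$) run over every element of~$H$. I claim that the walk $W$ in $\Cayd(G;S)$ starting at~$e$ and obtained by concatenating $n-1$ copies of $\bigl((c_i)_{i=1}^m,\, s_k\bigr)$ and then one further copy of $(c_i)_{i=1}^m$ is a hamiltonian path.

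Verifying the claim is the heart of the argument, and it is precisely where commutativity is used. Every edge of~$W$ is an edge of $\Cayd(G;S)$, since $c_i, s_k \in S$. For the vertices, the first copy of $(c_i)_{i=1}^m$ sweeps out exactly~$H$ and ends at $w := c_1 \cdots c_m \in H$; the following $s_k$-edge moves to~$ws_k$; and, because $G$ is abelian, the second copy of $(c_i)_{i=1}^m$ sweeps out $ws_k H = Hs_k$ and ends at $w^2 s_k \in Hs_k$. Iterating, the $\ell$-th copy of $(c_i)_{i=1}^m$ sweeps out the coset $Hs_k^{\ell - 1}$ and ends in it. Since the cosets $H, Hs_k, \ldots, Hs_k^{n-1}$ are pairwise disjoint and exhaust~$G$, the walk~$W$ meets every vertex of~$G$ exactly once; equivalently, $W$ has $(n-1)(m+1) + m = n|H| - 1 = |G| - 1$ edges and no repeated vertex. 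Hence $W$ is a hamiltonian path, which completes the induction.

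I expect the only delicate point to be the bookkeeping just described — keeping straight that each shifted copy of the $H$-path occupies precisely one coset of~$H$, so that the copies are vertex-disjoint and $W$ never revisits a vertex — which is exactly the place where the hypothesis that $G$ is abelian enters; the degenerate cases (the trivial group as the base of the induction, and loops, which may be discarded by assuming $e \notin S$ from the start) are routine.
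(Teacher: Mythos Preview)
Your proof is correct. It follows the same inductive strategy as the paper's, with one minor twist: the paper removes a single generator~$s$, applies the induction hypothesis to the \emph{quotient} $\Cayd\bigl(G/\langle s\rangle;\, S \smallsetminus \{s\}\bigr)$, and writes the path as $\bigl((s^{|s|-1}, t_i)_{i=1}^m,\, s^{|s|-1}\bigr)$ --- traversing each $\langle s\rangle$-coset by powers of~$s$ and hopping between cosets via the quotient path $(t_i)$ --- whereas you apply the induction hypothesis to the \emph{subgroup} $H = \langle s_1,\ldots,s_{k-1}\rangle$ and hop between $H$-cosets via powers of~$s_k$. These are dual versions of the same idea and equally elementary; the paper's version avoids the case split on whether $H = G$, while yours avoids explicitly working in a quotient group.
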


On the other hand, it follows from \fullcref{HousmanThm}{regular} that sometimes there is no hamiltonian cycle:

\begin{prop}[Rankin {\cite[Thm.~4]{Rankin-Camp}}] \label{Rankin2gen}
Assume $G = \langle a, b \rangle$ is abelian. Then there is a hamiltonian cycle in $\Cayd(G; a,b)$ if and only if\refnote{RankinAid}
 there exist $k,\ell \ge 0$, such that $\langle a^k b^\ell \rangle = \langle a b^{-1} \rangle$, and $k + \ell = |G : \langle a b^{-1} \rangle|$.
\end{prop}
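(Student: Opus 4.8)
The plan is to work with the arc-forcing subgroup $H = \langle ab^{-1}\rangle$. Since $G$ is abelian we have $b^{-1}a = ab^{-1} \in H$, so $aH = bH$; hence $G/H$ is cyclic of order $n := |G : H|$, generated by the common image of~$a$ and of~$b$. I would first note that, by vertex-transitivity of Cayley digraphs, any hamiltonian cycle may be assumed to pass through~$e$, and that, since both generators advance the $H$-coset by~$aH$, such a cycle runs cyclically through the cosets $H, aH, a^2H, \dots, a^{n-1}H$, returning to~$H$ exactly after each block of $n$ consecutive edges (a ``lap''). Writing $d = |H| = |G|/n$, there are $d$ laps, and the cycle meets~$H$ in exactly $d$ vertices, hence in all of~$H$.

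The heart of the argument is the claim that each coset of~$H$ is traversed \emph{entirely} by~$a$ or \emph{entirely} by~$b$. To prove this, fix a coset~$C$, let $A$ and~$B$ be the sets of vertices of~$C$ travelling by~$a$ and by~$b$ respectively, and let $C' = Ca$ be the following coset. Since the hamiltonian cycle enters each vertex of~$C'$ exactly once and the in-neighbour of such a vertex lies in~$C$ (as $a^{-1}$ and~$b^{-1}$ move the coset the same way), the sets $Aa$ and $Bb$ partition~$C'$; but $Aa$ and $Ba$ also partition $Ca = C'$, whence $Ba = C' \setminus Aa = Bb$. Right-multiplying by~$b^{-1}$ gives $B\,(ab^{-1}) = B$, so~$B$ is invariant under right multiplication by the generator of~$H$; as~$B$ lies in a single $H$-coset, on which~$H$ acts regularly, this forces $B = \varnothing$ or $B = C$, proving the claim.

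Granting the claim, each of the $n$ cosets travels by a fixed generator, so all $d$ laps are identical: a lap starting at $v \in H$ ends at~$vw$, where $w = a^k b^\ell$ with~$k$ and~$\ell$ the numbers of cosets travelling by~$a$ and by~$b$, so that $k, \ell \ge 0$ and $k + \ell = n$. The lap endpoints are then $e, w, w^2, \dots, w^{d-1}$, which are exactly the $d$ elements of~$H$; since $w \in H$ and these are distinct, $\langle w\rangle = H$. That proves one direction. For the converse, given $k, \ell \ge 0$ with $k + \ell = n$ and $\langle a^k b^\ell\rangle = H$, I would exhibit a hamiltonian cycle directly: concatenate $d$ copies of the lap $(a^k, b^\ell)$, starting at~$e$. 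Its product is $(a^k b^\ell)^d = e$ (because $|a^k b^\ell| = |H| = d$), and the vertex reached after~$i$ steps of the $j$-th lap is $(a^k b^\ell)^j\, a^{\min(i,k)} b^{\max(0,\,i-k)}$ --- a fixed element of the coset~$a^iH$, translated by $(a^k b^\ell)^j$; as~$i$ ranges over $\{0, \dots, n-1\}$ and~$j$ over $\{0, \dots, d-1\}$, and since $\langle a^k b^\ell\rangle = H$, these $nd = |G|$ vertices are distinct and exhaust~$G$, so the walk is a hamiltonian cycle.

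I expect the coset-partition claim to be the only genuine obstacle; the one place needing care is the bookkeeping $Aa \sqcup Bb = C' = Ca = Aa \sqcup Ba$, from which $Ba = Bb$. Everything afterward --- the identical-laps reduction, the identification $\langle w\rangle = H$, and the explicit construction for the converse --- is routine. One should also observe that the degenerate cases $k = 0$ or $\ell = 0$ (which arise exactly when $G$ is generated by one of the two generators alone) need no separate treatment, since the argument and the construction apply verbatim.
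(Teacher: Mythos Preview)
Your proof is correct and takes essentially the same approach as the paper's: both directions go through the arc-forcing subgroup $H = \langle ab^{-1}\rangle$, the fact that in a hamiltonian cycle every left coset of~$H$ travels by a single generator, and (what the paper packages as) the Factor Group Lemma, which is exactly your ``identical laps'' argument identifying hamiltonian cycles with generators of~$H$ of the form $a^k b^\ell$, $k+\ell = |G:H|$. The one minor difference is in how the coset-partition claim is proved: the paper invokes Housman's \cref{HousmanThm}, whose proof is the local induction ``if $g$ travels by~$b$ then $g(ba^{-1})$ cannot travel by~$a$, else $gb$ is visited twice,'' whereas you argue globally via the set identity $Aa \sqcup Bb = C' = Aa \sqcup Ba$, giving $B(ab^{-1}) = B$ and hence $B \in \{\varnothing, C\}$ --- a pleasant alternative that sidesteps any mention of a terminal vertex.
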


\begin{eg}
If $\gcd(a, n) > 1$ and $\gcd(a+1, n) > 1$, then $\Cayd \bigl( \integer_n ; a, a+1 \bigr)$ does not have a hamiltonian cycle.\refnote{EasyNoHam}
\end{eg}

The non-hamiltonian Cayley digraphs provided by \cref{Rankin2gen} are $2$-generated. A few $3$-generated examples are also known. Specifically, the following result lists (up to isomorphism) the only known examples of connected, non-hamiltonian Cayley digraphs $\Cayd(G; S)$, such that $\#S > 2$ (and $e \notin S$):

\begin{thm}[Locke-Witte {\cite{LockeWitte}}] \label{LockeWitte}
The following Cayley digraphs do not have hamiltonian cycles:
	\begin{enumerate}
	\item  \label{LockeWitte-12k}
	$\Cayd(\integer_{12k}; 6k, 6k+2, 6k+3)$, for any $k \in \integer^+$,
	and
	\item  \label{LockeWitte-2k}
	$\Cayd(\integer_{2k} ; a, b, b+k)$, for $a,b,k \in \integer^+$, such that certain technical conditions \pref{2kConds} are satisfied.
	\end{enumerate}
\end{thm}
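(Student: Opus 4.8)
The plan is to handle both families by a single counting scheme for hamiltonian cycles in Cayley digraphs on cyclic groups. First one records connectivity: $\gcd(6k,6k+2,6k+3)=1$ (since $\gcd(6k,6k+2)=2$ and $6k+3$ is odd), and the technical conditions~\pref{2kConds} ensure in particular that $\Cayd(\integer_{2k};a,b,b+k)$ is connected, so in both cases a hamiltonian cycle would be genuinely spanning. Now suppose $C$ is a hamiltonian cycle in $\Cayd(\integer_n;s_1,\ldots,s_d)$ and, for each~$j$, let $x_j$ be the number of vertices that travel by~$s_j$. Since each vertex travels by exactly one generator, $\sum_j x_j=n$; and summing the displacements of the $n$ edges of~$C$ gives $\sum_j x_j s_j\equiv 0\pmod n$. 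Moreover, for any subgroup $B\le\integer_n$ the image of~$C$ in $\Cayd(\integer_n/B;\bar s_1,\ldots,\bar s_d)$ is a closed walk of length~$n$ whose edge-labels are $\bar s_1,\ldots,\bar s_d$ with multiplicities $x_1,\ldots,x_d$ and which visits each vertex exactly $|B|$ times; this is very restrictive when several of the~$\bar s_j$ coincide or vanish. Two local obstructions are also available: if $s_j$ is an involution, then at most one vertex of each pair $\{v,v+s_j\}$ can travel by~$s_j$ (else $C$ contains a $2$-cycle), so $x_j\le n/2$; and if every vertex of some coset of $\langle s_j\rangle$ travels by~$s_j$, then $C$ contains a cycle of length $|\langle s_j\rangle|$, which is impossible when $\langle s_j\rangle\ne\integer_n$.

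For part~\pref{LockeWitte-12k} (so $n=12k$, $s_1=6k$, $s_2=6k+2$, $s_3=6k+3$), I would use the quotients by $B=\langle 2\rangle$ (giving $\integer_n/B\cong\integer_2$, with $\bar s_1=\bar s_2=0$ and $\bar s_3=1$) and by $B=\langle 3\rangle$ (giving $\integer_n/B\cong\integer_3$, with $\bar s_1=\bar s_3=0$ and $\bar s_2=2$). The displacement relation collapses to $2x_2+3x_3\equiv 0\pmod{12k}$, so $2x_2+3x_3\in\{0,12k,24k,36k\}$. The $\integer_3$-quotient shows that $C$ changes its $\langle 3\rangle$-coset only along $6k+2$-edges, each such step shifting the coset by~$-1$, so reaching all three cosets forces $x_2\ge 3$; this eliminates the values $0$ and~$36k$ (both of which require $x_2=0$). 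The $\integer_2$-quotient shows $C$ must change parity class, so $x_3\ge 2$; together with $\sum_j x_j=12k$ this forces $x_1=6k+x_3/2>6k$ in the case $2x_2+3x_3=12k$, contradicting $x_1\le 6k$. Hence $2x_2+3x_3=24k$, which pins the counts to the one-parameter family $(x_1,x_2,x_3)=(x_1,\,12k-3x_1,\,2x_1)$; the constraints $x_1\le 6k$, $x_3=2x_1\ge 2$, $x_2=12k-3x_1\ge 3$, and the no-full-orbit bound $x_2\le 12k-\gcd(6k+2,12k)$ for the generator $6k+2$ then confine $x_1$ to a short interval, and each surviving value of~$x_1$ must be excluded by tracing how~$C$ is forced to alternate between the two cosets of $\langle 2\rangle$ and among the three cosets of $\langle 3\rangle$. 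This last finite but intricate case-check — really a bookkeeping argument on the cyclic word of generators read around~$C$ — is the \emph{main obstacle}.

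For part~\pref{LockeWitte-2k}, the same template applies with the quotient $B=\langle k\rangle$: because $b$ and $b+k$ differ by the involution~$k$, they have the same image in $\integer_{2k}/\langle k\rangle\cong\integer_k$, so $C$ projects to a closed walk of length~$2k$ in the two-generated digraph $\Cayd(\integer_k;\bar a,\bar b)$ that covers each vertex exactly twice, using $\bar a$ exactly $x_a$ times and $\bar b$ exactly $x_b+x_{b+k}$ times. One can then bring the two-generated theory to bear — Housman's \cref{HousmanThm} and Rankin's \cref{Rankin2gen} — together with the involution constraint inside each fibre $\{v,v+k\}$, to force a short cycle. The technical conditions~\pref{2kConds} are exactly the arithmetic restrictions on $(a,b,k)$ that make this reduced picture contradictory: they are chosen so that every travel-count vector surviving the modular constraints is killed by the fibre analysis, and rather than reproduce them here I would refer to the verification in~\cite{LockeWitte}. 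As in part~\pref{LockeWitte-12k}, the real work lies not in setting up these constraints but in the case-by-case exclusion of the finitely many admissible travel-count vectors.
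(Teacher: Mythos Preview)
This theorem is quoted from \cite{LockeWitte} and is not proved in the present paper, so there is no in-paper argument to compare your sketch against. That said, your proposal is not a proof, and the missing piece is not a routine check.

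In part~\pref{LockeWitte-12k}, after your reductions the surviving edge-count vectors are $(x_1,x_2,x_3)=(x_1,\,12k-3x_1,\,2x_1)$ with $1\le x_1\le 4k-1$. This interval has about $4k$ elements, not a bounded number: your ``no-full-orbit'' bound on~$x_2$ adds essentially nothing, since $\gcd(6k+2,12k)\in\{2,4\}$, which only forces $x_1\ge 1$ or~$2$. Thus what you call a ``finite but intricate case-check'' is in fact a family of cases whose size grows linearly with~$k$; an argument that disposes of all of them uniformly would be the heart of the proof, not a leftover detail. Your congruence framework is correct as far as it goes and does isolate the right one-parameter family, but edge-counting alone cannot finish. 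The method actually used in \cite{LockeWitte} is structurally different: one works inside a class~$\mathcal{E}$ of spanning subdigraphs with prescribed behaviour on each coset of the order-$2$ subgroup (the same class that resurfaces in the proof of \cref{3genAbelMustBeCyclic}), and a three-arc swap lemma \cite[Lem.~2.1]{LockeWitte} shows that the parity of the number of components is invariant across the class. Under the stated hypotheses that parity is even, so no member of~$\mathcal{E}$ --- in particular no hamiltonian cycle --- can have exactly one component. This gives a uniform contradiction with no case analysis in~$k$.

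For part~\pref{LockeWitte-2k} you explicitly defer the verification to \cite{LockeWitte}, so nothing is established.
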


\begin{rem} \label{2kConds}
The precise conditions in \pref{LockeWitte-2k} are: (i)~either $a$ or~$k$ is odd, (ii)~either $a$~is even or $b$ and~$k$ are both even, (iii)~$\gcd(a-b,k) = 1$, (iv)~$\gcd(a,2k) \neq 1$, and (v)~$\gcd(b,k) \neq 1$.
\end{rem}

It is interesting to note that, in the examples provided by \cref{LockeWitte}, the group~$G$ is cyclic (either $\integer_{12k}$ or ~$\integer_{2k}$), and either
	\begin{enumerate}
	\item[\pref{LockeWitte-12k}] one of the generators has order~$2$,
	or
	\item[\pref{LockeWitte-2k}] two of the generators differ by an element of order~$2$.
	\end{enumerate}
S.\,J.\,Curran (personal communication) asked whether the constructions could be generalized by allowing $G$ to be an abelian group that is not cyclic. We provide a negative answer for case \pref{LockeWitte-2k}:

\begin{prop} \label{3genAbelMustBeCyclic}
Let $G$ be an abelian group\/ {\upshape(}\!written additively{\upshape)}, and let $a,b,k \in G$, such that $k$ is an element of order\/~$2$. {\upshape(}Also assume $\{a,b,b+k\}$ consists of three distinct, nontrivial elements of~$G$.{\upshape)} If the Cayley digraph $\Cayd(G;a,b,b+k)$ is connected, but does not have a hamiltonian cycle, then $G$ is cyclic.
\end{prop}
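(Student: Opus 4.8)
The plan is to prove the contrapositive: given that $G$ is a non-cyclic abelian group, $k\in G$ has order $2$, and $\{a,b,b+k\}$ are three distinct nontrivial elements with $\langle a,b,b+k\rangle=G$, I will exhibit a hamiltonian cycle in $\Cayd(G;a,b,b+k)$. Write $c=b+k$ and $S=\{a,b,c\}$; the arc-forcing subgroup of $\Cayd(G;S)$ is $H=\langle a-b,k\rangle$, which contains $K:=\langle k\rangle$, a subgroup of order $2$. The key structural observation is that the pair $\{b,c\}$ is special: $b-c=-k$ has order $2$, so $\Cayd(G;b,c)$ (a spanning subdigraph of $\Cayd(G;S)$) has arc-forcing subgroup $K$, and the digraph induced on $G/N$ by any $N\supseteq\langle b,c\rangle$ collapses along the $b$- and $c$-edges.

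First I would dispose of the case $\langle b,c\rangle=G$ using \cref{Rankin2gen}. Here the arc-forcing subgroup of $\Cayd(G;b,c)$ is $\langle b-c\rangle=K$, and $m:=|G:K|=|G|/2$. Since the image of $b$ in $G/K$ has order dividing $m$, we have $mb\in K$, so $b^{k'}c^{\ell'}=(k'+\ell')b+\ell'k=mb+\ell'k$ whenever $k'+\ell'=m$; choosing the parity of $\ell'\in\{0,\dots,m\}$ so that $mb+\ell'k=k$, we get $\langle b^{k'}c^{\ell'}\rangle=K$ with $k'+\ell'=m=|G:K|$, which is exactly Rankin's criterion for a hamiltonian cycle in $\Cayd(G;b,c)\subseteq\Cayd(G;S)$. (This step does not use non-cyclicity, consistent with the fact that $\Cayd(G;b,b+k)$ is hamiltonian even for cyclic~$G$.)

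So I may assume $N:=\langle b,c\rangle=\langle b,k\rangle$ is a \emph{proper} subgroup of $G$; put $n=|G:N|\ge 2$. Since $G=\langle a,N\rangle$, the quotient $G/N$ is cyclic, generated by the image of $a$, and $N\backslash\Cayd(G;S)=\Cayd(G/N;S)$ is (after deleting loops) the directed $n$-cycle on the cosets $N,Na,\dots,Na^{n-1}$, because $b$ and $c$ act trivially on $G/N$. The plan is the standard construction of stacking cycles in the cosets: travel along $b$- and $c$-edges inside each coset $Na^{i}$, and move from one coset to the next by a single $a$-edge, so that the whole walk closes up into one spanning cycle. The ingredients are: (i) $\Cayd(N;b,c)$ has a hamiltonian cycle — if $k\in\langle b\rangle$ then $\langle b\rangle=N$ and the $b$-edges alone give one; if $k\notin\langle b\rangle$ then $N\cong\langle b\rangle\times\langle k\rangle$, and when $|b|$ is odd $c$ generates $N$, while when $|b|$ is even one writes down the explicit ``two parallel $b$-cycles joined by two $c$-edges'' hamiltonian cycle; (ii) by \cref{abelianpath} every connected Cayley digraph on the abelian group $N$ has a hamiltonian path, and by \fullcref{HousmanThm}{terminal} a hamiltonian path of $\Cayd(N;b,c)$ from $x$ ends in $x+T$ with $T=-b+K=\{-b,-c\}$ a two-element coset — and, crucially, \emph{both} vertices of $x+T$ occur as endpoints (when $k\notin\langle b\rangle$ this comes from the involutive automorphism of $N$ fixing $k$ and interchanging $b$ and $c$; when $k\in\langle b\rangle$, from a short explicit path). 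One then cuts the hamiltonian cycle of $\Cayd(N;b,c)$ into a suitable number of arcs, distributes them among the cosets of $N$ (possibly traversing the $n$-cycle of cosets more than once, i.e. using $2n,3n,\dots$ rather than $n$ connecting $a$-edges), and checks that the net displacement of the resulting closed walk can be made $0$; the bookkeeping of the running endpoint's $K$-coset reduces this to a single congruence, and the available freedom — how many arcs to allot to each coset, and which of the two terminal vertices to hit each time — suffices to satisfy it.

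The main obstacle is exactly that last step: showing that this freedom is always enough to close the walk up, and that it is precisely non-cyclicity of $G$ that eliminates the obstruction. With $n$ connecting $a$-edges the matching condition becomes $n(b-a)\in K$, which can genuinely fail (and does for the cyclic Locke–Witte examples of \fullcref{LockeWitte}{2k}, where $\langle b,b+k\rangle$ is likewise a proper subgroup and there is no hamiltonian cycle), so one must argue that allowing $jn$ connecting $a$-edges for a larger $j$ removes the obstruction whenever $G$ is not cyclic. Pinning down this counting argument — essentially a gcd computation inside the abelian group $G$ with the subgroup $N=\langle b,k\rangle$ — is where the real work lies; everything else (the case $\langle b,c\rangle=G$, the hamiltonian path/cycle facts about $N$, and the two-element terminal coset of $\Cayd(N;b,c)$) is routine given the cited results.
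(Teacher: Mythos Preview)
Your proposal has a genuine gap that you yourself flag: the entire argument hinges on the final ``gcd computation,'' and you do not carry it out. Worse, the closure condition you isolate is not where the obstruction lives. Summing edge labels in a closed walk with $jn$ $a$-edges gives $jn(a-b)\in K$, and since $n(a-b)\in N=\langle b,k\rangle$ this holds for \emph{some} $j$ in every case, cyclic or not --- so non-cyclicity cannot enter there. The real difficulty is the step you wave at: for $j>1$ you must partition each coset of~$N$ into $j$ directed $\{b,c\}$-paths with prescribed entry and exit vertices so that the $jn$ pieces glue into a single cycle rather than several. You treat only $j=1$ (a hamiltonian path in each coset), and for the cyclic Locke--Witte examples of \fullcref{LockeWitte}{2k} your scheme must fail at exactly this partitioning step; you give no indication of how non-cyclicity of~$G$ rescues it.

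The paper's proof is organized around a different decomposition and a different mechanism. It first disposes of the case $\langle a-b,k\rangle\neq G$ by a direct amalgamation argument (Case~3 of \cite[Thm.~4.1]{LockeWitte}). In the remaining case $\langle a-b,k\rangle=G$, non-cyclicity is used once, sharply: since $\langle a-b\rangle\neq G$ and $|k|=2$, one gets $G=\langle a-b\rangle\oplus\langle k\rangle$, so $a-b$ has even order. The proof then builds an explicit spanning subdigraph $H_0$ (in- and out-degree~$1$ everywhere) belonging to a class $\mathcal E$ in which, within each coset of~$\langle k\rangle$, one vertex travels by~$a$ and the other by $b$ or $b+k$; the direct-sum decomposition forces $H_0$ to have an \emph{odd} number of components. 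A three-vertex rewiring lemma, which preserves both membership in~$\mathcal E$ and the parity of the component count, then merges components two at a time until one cycle remains. The engine is a parity argument tied to the decomposition $G=\langle a-b\rangle\oplus\langle k\rangle$, not the coset-stacking and endpoint-matching you propose; your outline never touches this parity phenomenon, and without it there is no visible way to separate the non-cyclic case from the cyclic non-hamiltonian examples.
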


\begin{proof}
We prove the contrapositive: assume $G$ is not cyclic, and we will show that the Cayley digraph has a hamiltonian cycle (if it is connected). The argument is a modification of the proof of \cite[Thm.~4.1($\Leftarrow$)]{LockeWitte}.\refnote{3genAbelMustBeCyclicAid}

Construct a subdigraph~$H_0$ of~$G$ as in \cite[Defn.~4.2]{LockeWitte}, but with $G$ in the place of~$\integer_{2k}$, with $|G|$ in the place of~$2k$, and with $|a|$ in the place of~$d$. (Case~1 is when $k \notin \langle a \rangle$; Case~2 is when $k \in \langle a \rangle$.) Every vertex of~$H_0$ has both invalence~$1$ and outvalence~$1$.

The argument in Case~3 of the proof of \cite[Thm.~4.1($\Leftarrow$)]{LockeWitte} shows that $\Cayd(G; a,b,b+k)$ has a hamiltonian cycle if $\langle a - b, k \rangle \neq G$. Therefore, we may assume $\langle a - b, k \rangle = G$. On the other hand, we know $\langle a-b \rangle \neq G$ (because $G$ is not cyclic). Since $|k| = 2$, this implies $G = \langle a-b \rangle \oplus \langle k \rangle$.
Since $G$ is not cyclic, this implies that $a-b$ has even order. Also, we may write $a = a' + k'$ and $b = b' + k''$ for some (unique) $a',b' \in \langle a - b \rangle$ and $k', k'' \in \langle k \rangle$. (Since $a' - b' \in \langle a - b \rangle$, it is easy to see that $k' = k''$, but we do not need this fact.)

\begin{claim}
$H_0$ has an odd number of connected components.
\end{claim}
Arguing as in the proof of \cite[Lem.~4.1]{LockeWitte} (except that, as before, Case~1 is when $k \notin \langle a \rangle$, and Case~2 is when $k \in \langle a \rangle$), we see that the number of connected components in~$H_0$ is
$$ \begin{cases}
 |G : \langle a, k \rangle| +  |G : \langle b, k \rangle|
& \text{if $k \notin \langle a \rangle$}  , \\
 |G : \langle b, k \rangle|
& \text{if $k \in \langle a \rangle$} .
\end{cases}$$
Since $\langle a' - b' \rangle = \langle a - b \rangle$, we know that one of~$a'$ and~$b'$ is an even multiple of $a - b$, and the other is an odd multiple. (Otherwise, the difference would be an even multiple of $a - b$, so it would not generate $\langle a - b \rangle$.)
 Thus, one of $|G : \langle a, k \rangle|$ and $|G : \langle b, k \rangle|$ is even, and the other is odd. So $|G : \langle a, k \rangle| + |G : \langle b, k \rangle|$ is odd. This establishes the claim if  $k \notin \langle a \rangle$.
 
We may now assume $k \in \langle a \rangle$.
This implies that the element $a'$ has odd order (and $k'$ must be nontrivial, but we do not need this fact). This means that $a'$ is an even multiple of $a-b$, so $b'$~must be an odd multiple of $a-b$ (since $\langle a' - b' \rangle = \langle a - b \rangle$). Therefore $|\langle a - b \rangle : \langle b' \rangle|$ is odd, which means $|G : \langle b,k \rangle|$ is odd. This completes the proof of the claim.
\medbreak 

Now, if $|G : \langle b, k \rangle|$ is odd, we can apply a very slight modification of the argument in Case~4 of the proof of \cite[Thm.~4.1($\Leftarrow$)]{LockeWitte}. (Subcase~4.1 is when $k \notin \langle a \rangle$ and Subcase~4.2 is when $k \in \langle a \rangle$.) We conclude that $\Cayd(G;a,b,b+k)$ has a hamiltonian cycle, as desired.

Finally, if $|G : \langle b, k \rangle|$ is even, then more substantial modifications to the argument in \cite{LockeWitte} are required. For convenience, let $m = |G : \langle a, k \rangle|$. Note that, since $|G : \langle b, k \rangle|$ is even, the proof of the claim 
shows that $m$ is odd and $k \notin \langle a \rangle$.

Define $H_0'$ as in Subcase~4.1 of  \cite[Thm.~4.1($\Leftarrow$)]{LockeWitte} (with $G$ in the place of~$\integer_{2k}$, and replacing $\gcd(b,k)$ with $|G : \langle b, k \rangle|$). Let $H_1 = H_0'$, and inductively construct, for $1 \le i \le (m+1)/2$, an element $H_i$ of $\mathcal{E}$, such that
	$$ \{\, v \mid \text{$z_v = 0$ and $0 \le y_v \le 2i-2$} \,\}
	\cup
	\{\, v \mid \text{$z_v = 1$ and $x_v = 0$ or~$1$ (mod $|G : \langle b, k \rangle|$)} \,\} $$
is a component of~$H_i$, and all other components are components of~$H_0$. The construction of~$H_i$ from~$H_{i-1}$ is the same as in Subcase~4.1, but with $2i$ replaced by~$2i-1$.

We now let $K_1 = H_{(m+1)/2}$, and inductively construct, for $1 \le i \le |G : \langle b, k \rangle|/2$, an element~$K_i$ of~$\mathcal{E}$, such that
	$$ \{\, v \mid z_v = 0 \,\}
	\cup
	\{\, v \mid \text{$z_v = 1$ and $x_v \equiv 0,1,\ldots,$ or $2i-1$ (mod $|G : \langle b, k \rangle|$)} \,\} $$
is a single component of~$K_i$. 
Namely, \cite[Lem.~4.2]{LockeWitte} implies there is an element $K_i = K_{i-1}'$, such that $(2i-2)a$, $(2i-2)a + k$, and $(2i-1)a + k$ are all in the same component of~$K_i$. Then, for  $i = |G : \langle b, k \rangle|/2$, we see that $K_i$ is a hamiltonian cycle.
\end{proof}

\begin{ack}
I thank Stephen J.~Curran for asking the question that inspired \cref{3genAbelMustBeCyclic}.
The other results in this paper were obtained during a visit to the School of Mathematics and Statistics at the University of Western Australia (partially supported by funds from Australian Research Council Federation Fellowship FF0770915). I am grateful to my colleagues there for making my visit so productive and enjoyable.
\end{ack}


\newpage

\appendix

\pagestyle{myheadings}
\markboth{Notes to aid the referee (\emph{On Cayley digraphs that do not have hamiltonian paths} by Dave Witte Morris)}{Notes to aid the referee (\emph{On Cayley digraphs that do not have hamiltonian paths} by Dave Witte Morris)}

\section{Notes to aid the referee}

\thispagestyle{empty}

\bigskip\hrule width\textwidth \bigbreak

\begin{aid} \label{S=ab} \tolerance=5000
When $S = \{s_1,s_2,\ldots,s_n\}$, we often suppress the set braces, and write $\Cayd(G; s_1,s_2,\ldots,s_n)$, instead of $\Cayd \bigl( G; \{s_1,s_2,\ldots,s_n \} \bigr)$.
\end{aid}

\begin{aid} \label{ProveMilnor}
We provide a proof, since the result is stated without proof in \cite[p.~201]{Nathanson-PartProds} (and \cite[p.~267]{HolsztynskiStrube}). We use the notation and terminology of \cref{PrelimSect}. 

Suppose $L = (s_i)_{i=1}^n$ is a hamiltonian path. 
Since $|a| = 2$ and $|b| = 3$, we know that $(s_i)_{i=1}^n$ cannot contain two consecutive $a$'s, or three consecutive~$b$'s. On the other hand, if $[g](a,b,a)$ is a subpath of~$L$, then $gb^2$ must be either the initial vertex or the terminal vertex of~$L$. (Since $gb^2 \cdot b = g$ and $gb^2 \cdot b^{-1} = gb$ are interior vertices of the path $[g](a,b,a)$, we see that $gb^2 a$ is the only vertex of~$G$ that can be adjacent to~$gb^2 a$ in~$L$.) Therefore, there can be at most two occurrences of $(a,b,a)$ in $(s_i)_{i=1}^n$. (And there must be less than two occurrences unless $s_1 = s_n = a$.) Hence, no path can be longer than
	$$ \bigl( (a, b^2)^{|ab^2|-1}, a,b, (a, b^2)^{|ab^2|-1}, a,b, (a, b^2)^{|ab^2|-1}, a \bigr) ,$$
which has length $9 |ab^2| - 4$. Therefore $|G| \le 9 |ab^2| - 3$. This is a slightly better bound than is stated in the \lcnamecref{23NoHP}.
\end{aid}

\begin{aid} \label{TermIndep}
For $a,b \in S$, we have
	$ a^{-1} H = b^{-1} (ba^{-1}) H = b^{-1} H$,
since $ba^{-1} \in S S^{-1} \subseteq H$.
\end{aid}

\begin{aid} \label{Sa}
For $s,t \in S$, we have 
	$$s t^{-1} = (s g) (g^{-1} t^{-1}) = (sg) (tg)^{-1} \in (Sg)(Sg)^{-1} \subseteq \langle Sg \rangle .$$
Therefore $\langle S S^{-1} \rangle \subseteq \langle Sg \rangle$.

For $a \in S$, we obviously have $\langle S a^{-1} \rangle \subseteq \langle S S^{-1} \rangle$. Letting $g = a^{-1}$ in the conclusion of the preceding paragraph provides the opposite inclusion.
\end{aid}

\begin{aid} \label{aS}
Essentially the same argument as in \pref{Sa} shows $\langle S^{-1} S \rangle = \langle a^{-1} S \rangle$.

We have $a^{-1} S = a^{-1} (Sa^{-1}) a = (Sa^{-1})^a$, so $\langle a^{-1} S \rangle = \langle Sa^{-1} \rangle^a$.

From \fullcref{ArcForcingRems}{Sg}, we know $\langle Sa^{-1} \rangle = \langle S S^{-1} \rangle$.
\end{aid}

\begin{aid} \label{HousmanHAid}
Letting $S = \{a,b\}$, the arc-forcing subgroup is 
	$$H = \langle S b^{-1} \rangle
	 = \bigl\langle \{a,b\} b^{-1} \bigr\rangle
	 = \langle ab^{-1}, b b^{-1}  \rangle
	 = \langle ab^{-1}, e  \rangle
	 = \langle ab^{-1}  \rangle
	 . $$
\end{aid}

\begin{aid} \label{HousmanAid}
The proof of the \lcnamecref{HousmanThm} is so short that we provide it for the reader's convenience. The idea goes back to \cite{Rankin-Camp}.

Note that if $g$ travels by~$b$, then $g(ba^{-1})$ cannot travel by~$a$. (Otherwise, $L$~would visit the vertex $gb = \bigl(g(ba^{-1}) \bigr) a$ twice. Thus, either $g(ba^{-1})$ travels by~$b$, or $g(ba^{-1})$ is the terminal vertex. Hence, if $gH$ does not contain the terminal vertex, then we see, by induction, that $g(ba^{-1})^k$ travels by~$b$ for all $k \in \integer$. So $gH$ travels by~$b$.

Similarly, if $g$ travels by~$a$, then $g(ab^{-1})$ cannot travel by~$b$. Thus, $gH$ travels by~$a$, unless $gH$ contains the terminal vertex.

Therefore, a coset $gH$ either travels by~$a$ or travels by~$b$, unless it contains the terminal vertex.

Furthermore, since no directed edge of~$L$ can enter the initial vertex~$e$, we know that $a^{-1}$ does not travel by~$a$ and $b^{-1}$~does not travel by~$b$. So $a^{-1}H$ does not travel by~$a$, and $b^{-1} H$ does not travel by~$b$. However, $a^{-1}H = b^{-1} H$, since $(b^{-1})^{-1} a^{-1} = b a^{-1} \in H$. Therefore, $a^{-1}H$ travels by neither $a$ nor~$b$. This proves \pref{HousmanThm-terminal}.

We now know that no regular coset contains the terminal vertex. Therefore, each regular coset either travels by~$a$ or travels by~$b$. This proves \pref{HousmanThm-regular}.

\medbreak

For future reference, we record the following observation that follows from the above arguments:

\begin{lem}[Housman {\cite[Lem.~6.4(b)]{CurranWitte}}] \label{TermCosetTravels}
Assume the situation of \cref{HousmanThm}. Any element of the terminal coset is of the form $a^{-1} (b a^{-1})^i$, with $0 \le i < |a b^{-1}|$. In particular, from \fullcref{HousmanThm}{terminal}, we know the terminal vertex of~$L$ is $a^{-1} (b a^{-1})^d$, for some~$d$ with $0 \le d < |a b^{-1}|$. Then:
	$$ a^{-1} (b a^{-1})^i \text{ travels by }
	\begin{cases}
	 b & \text{if $0 \le i < d$} , \\
	 a & \text{if $d < i <  |a b^{-1}|$} 
	 . \end{cases} $$
Therefore, the number vertices in the terminal coset that travel by~$b$ is exactly~$d$.
\end{lem}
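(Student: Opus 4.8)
The plan is to read off the statement from the structural facts already established in the proof of \cref{HousmanThm} (recorded in \pref{HousmanAid}), adding just one elementary observation about neighbouring vertices inside the terminal coset.

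First I would dispose of the preliminary assertions. Since $ba^{-1} = (ab^{-1})^{-1}$, the arc-forcing subgroup is $H = \langle ab^{-1} \rangle = \langle ba^{-1} \rangle$, and $|ba^{-1}| = |ab^{-1}| =: q$. Hence $H = \{\, (ba^{-1})^i \mid 0 \le i < q \,\}$, so the terminal coset $a^{-1}H$ consists of exactly the $q$ distinct elements $v_i := a^{-1}(ba^{-1})^i$ for $0 \le i < q$. By \fullcref{HousmanThm}{terminal} the terminal vertex of~$L$ lies in $a^{-1}H$, so it equals $v_d$ for some~$d$ with $0 \le d < q$, which is the ``in particular'' clause.

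The key observation is the identity $v_i b = v_{i+1} a$, valid for every~$i$ (indices read mod~$q$): indeed $v_i b = a^{-1}(ba^{-1})^i b = a^{-1}(ba^{-1})^{i+1} a = v_{i+1} a$. Thus the vertex reached from~$v_i$ by~$b$ and the vertex reached from~$v_{i+1}$ by~$a$ coincide; since a hamiltonian path has at most one edge entering any vertex, $v_i$ and~$v_{i+1}$ cannot \emph{both} travel, respectively, by~$b$ and by~$a$. I would combine this with the facts already used in \pref{HousmanAid}: that $v_0 = a^{-1}$ does not travel by~$a$ (no edge of~$L$ enters the initial vertex $e = v_0 a$), that $v_{q-1} = b^{-1}$ does not travel by~$b$ (no edge enters $e = v_{q-1} b$), and that every vertex of the terminal coset other than~$v_d$, being a non-terminal vertex of~$L$, travels by exactly one of~$a$,~$b$.

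These ingredients finish the proof by two short inductions. An upward induction on~$i$ shows $v_i$ travels by~$b$ for $0 \le i < d$: the base case $v_0$ does not travel by~$a$ and (as $0 < d$) is not terminal, hence travels by~$b$; the step uses the key observation to rule out~$a$ at~$v_{i+1}$. A downward induction from $i = q-1$ shows $v_i$ travels by~$a$ for $d < i < q$: the base case $v_{q-1}$ does not travel by~$b$ and (as $d < q-1$) is not terminal; the step again invokes the key observation. Counting then gives exactly $d$ vertices of the terminal coset that travel by~$b$. I do not anticipate a genuine obstacle; the only care needed is the bookkeeping at the two ends of the cyclic chain and the degenerate cases $d = 0$ and $d = q-1$, where one induction range is empty and nothing has to be proved.
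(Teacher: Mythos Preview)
Your proposal is correct and follows essentially the same route as the paper's proof: both arguments use that $a^{-1}$ cannot travel by~$a$ and $b^{-1}$ cannot travel by~$b$ as the two base cases, then propagate along the terminal coset via the observation (equivalent to your identity $v_i b = v_{i+1}a$) that consecutive vertices $v_i,v_{i+1}$ cannot travel by $b$ and~$a$ respectively, stopping at the terminal vertex~$v_d$ from each side. Your write-up is in fact cleaner than the paper's sketch (which contains a slip in the stated induction range), but the underlying idea is identical.
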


\begin{proof}
Since no edge of~$L$ enters the initial vertex~$e$, we know that $b^{-1}$ does not travel by~$b$. This is the base case of a proof by induction that $a^{-1} (b a^{-1})^i$ travels by~$a$ if $0 \le i < d$. The induction step is provided by the argument in the second paragraph of \pref{HousmanAid}. After interchanging $a$ and~$b$, the same argument shows that $b^{-1} (a b^{-1})^j$ travels by~$b$ if $0 \le j <  |a b^{-1}| - d - 1$.
\end{proof}

\end{aid}

\begin{aid} \label{SetupH}
We have $\langle a b^{-1} \rangle 
= \langle (\ah z)(\bh z)^{-1} \rangle
= \langle (\ah z)(z^{-1} \bh^{-1}) \rangle
= \langle \ah \bh^{-1} \rangle$.

By the definition of~$\beta$, we have $\gcd(\alpha,\beta) = 1$. Therefore $\integer_\alpha \times \integer_\beta$ is cyclic. More precisely, since $\ah$~generates $\integer_\alpha$, and $\bh^{-1}$~generates $\integer_\beta$, we have $\langle \ah \bh^{-1} \rangle = \integer_\alpha \times \integer_\beta$.
\end{aid}

\begin{aid} \label{verttrans}
For each $g \in G$, define $\varphi_g \colon G \to G$ by $\varphi_g(x) = gx$. It is easy to see that the map  is an automorphism of $\Cayd(G;S)$. Namely, if there is an edge from~$x$ to~$y$, then we have $y = xs$ for some $s \in S$. Then
	$$ \varphi_g(x) \, s = (gx) s= g(xs) = \varphi_g(xs)  =  \varphi_g(y),$$
so there is an edge from $\varphi_g(x)$ to $\varphi_g(y)$. 

Also, for any $x, y \in G$, we have $\varphi_{yx^{-1}}(x) = (yx^{-1})x = y$. Therefore the group $\{\, \varphi_g \}$ of automorphisms of $\Cayd(G;S)$ acts transitively on the set of vertices, so (by definition) $\Cayd(G;S)$ is vertex-transitive.

Now, if $g$ is the initial vertex of~$L$, then $e$~is the initial vertex of the hamiltonian path $\varphi_{g^{-1}}(L)$.
\end{aid}

\begin{aid} \label{TermCosetz/MinOne}
$a^{-1} H = (\ah z)^{-1} H = z^{-1} \ah^{-1} H = z^{-1} H$, since $\ah \in \integer_\alpha \times \integer_\beta = H$.

The element $r^2$ generates a subgroup of index~$2$ (and order $(p-1)/2$) in~$\integer_p^\times$. Since $p \equiv 3 \pmod{4}$, we know that $(p-1)/2$ is odd, so $\langle r^2 \rangle$ does not contain any elements of even order. In particular, it does not contain~$-1$, which is of order~$2$. Therefore $\langle -1, r^2 \rangle$ properly contains $\langle r^2 \rangle$. Since $|\integer_p^\times : \langle r^2 \rangle| = 2$, this implies $\langle -1, r^2 \rangle = \integer_p^\times$.
\end{aid}

\begin{aid} \label{AllzH}
We have
	$ G = (\integer_\alpha \times \integer_\beta) \ltimes \integer_p
	= H \integer_p
	= \integer_p H
	$
(since $\integer_p \normal G$). Therefore, every left coset of~$H$ is of the form $z' H$, for some $z' \in \integer_p$.
\end{aid}

\begin{aid} \label{CentralIsNilp}
By definition \cite[Defn.~5.7.8, p.~23]{Ash-AbstractAlgebra}, $G$ is \emph{nilpotent} if there exists a chain
	$$ \{e\} = Z_0 \normal Z_1 \normal \cdots \normal Z_c = G ,$$
of subgroups of~$G$, such that $[G, Z_i] \subseteq Z_{i-1}$ for $i \ge 1$. If $[G, G] \subseteq Z(G)$, where $Z(G)$ is the center of~$G$, then the chain
	$$ \{e\} \normal [G,G] \normal G ,$$
shows that $G$ is nilpotent, because $\bigl[ G, [G,G] \bigr] \subseteq \bigl[G, Z(G) \bigr] = \{e\}$.
\end{aid}

\begin{aid} \label{MayAssumeS=2Aid}
Let $\pi = t_1 t_2 \cdots t_n$. 	
\begin{itemize}
\item The path $(t_j)_{j=1}^n$ traverses the vertices in~$N$. 
\item Then $[\pi s_1](t_j)_{j=1}^n$ traverses the vertices in $\pi s_1 N = N s_1$.
\item Then $[\pi s_1\pi s_2](t_j)_{j=1}^n$ traverses the vertices in $\pi s_1 \pi s_2 N = N s_1 s_2$.
	\\ \hbox{\qquad} \dots
\item Then $[\pi s_1\pi s_2 \cdots \pi s_m](t_j)_{j=1}^n$ traverses the vertices in $\pi s_1 \pi s_2 \cdots \pi s_m N = N s_1 s_2 \cdots s_m$.
\end{itemize}
Since $(s_i)_{i=1}^m$ is a hamiltonian path in $\Cayd(G/N;S)$, we know that $N, N s_1, \ldots N s_1 s_2 \cdots s_m$ is a list of all the cosets of~$N$, so the walk traverses all the vertices in~$G$ (without repetition), and is therefore a hamiltonian path.
\end{aid}

\begin{aid} \label{SkewedGensAid}
Since $(s_i)_{i=1}^{n-1}$ is a hamiltonian path in $K \backslash {\Cayd(G;S)}$, any $g \in G$ can be written (uniquely) in the form 
	$$ \text{$g = k s_1 s_2 \cdots s_p$, with $k \in K$ and $1 \le p < n$} .$$
Let $t_j' = t_j s_2 s_3 \cdots s_n$ for $1 \le j \le m$. Then $(t_j')_{j=1}^m$ is a hamiltonian path in $\Cayd(K; S s_2 s_3 \cdots s_n)$, so there is a (unique)~$q$, such that $t_1' t_2' \cdots t_q' = k$ (and $1 \le q \le m$). Hence, $g$~can be written uniquely in the form
	$$ t_1' t_2' \cdots t_q' \cdot s_1 s_2 \cdots s_p .$$
This means that the walk visits each vertex~$g$ exactly once, so it is a hamiltonian path.
\end{aid}

\begin{aid} \label{GGPrimeHGan}
Note that $a$~generates the quotient group $G/H^G$, since
	$$G = \langle S \rangle \subseteq \langle S a^{-1}, a \rangle = \langle H, a \rangle \subseteq \langle H^G, a \rangle .$$
Therefore $(a^n)$ is a hamiltonian cycle in $\Cayd(G/H^G; S)$.

Also, we have $a \equiv b \equiv c \pmod{H^G}$, since 
	$$\text{$a b^{-1} \in S S^{-1} \subseteq H \subseteq H^G$ and 
	$a c^{-1} \in S S^{-1} \subseteq H \subseteq H^G$} .$$
Therefore, replacing some or all of the occurrences of~$a$ in $(a^n)$ with either $b$ or~$c$ will have no effect on the hamiltonian cycle in $\Cayd(G/H^G; S)$. 
In other words, if $s_i \in \{a,b,c\}$ for all~$i$, then $(s_i)_{i=1}^n$ is a hamiltonian cycle in $\Cayd(G/H^G; S)$.
In particular, $(a^{n-1}, b)$ and $(a^{n-2}, b, c)$ are hamiltonian cycles.
\end{aid}

\begin{aid} \label{GGPrimeHGM}
Let $H' = \langle H, s_1s_2 \cdots s_n \rangle$, so $H'$ is a subgroup of~$H^G$. 

Suppose $H'$ is a proper subgroup of~$H^G$. Then $H'$ is contained in some maximal subgroup~$M'$ of~$H^G$. Then, since $H'$ obviously contains~$H$, we we see that $M'$ contains~$H$, so the uniqueness of~$M$ implies $M' = M$. Therefore 
	$$ s_1s_2 \cdots s_n \in H' \subseteq M' = M .$$
This contradicts the fact that $s_1s_2 \cdots s_n \notin M$. We conclude that $H' = H^G$, which establishes the first equality.

\medbreak

\fullCref{ArcForcingRems}{Sg} tells us $H \subseteq \langle S s_2 \cdots s_n \rangle$. 
Since $s_1 \in S$, we also have $s_1 s_2 \cdots s_n \in S s_2 \cdots s_n$.
Therefore $\langle H, s_1 s_2 \cdots s_n \rangle \subseteq \langle S s_2 \cdots s_n \rangle$.
For the reverse inclusion, note that
	$$ S s_2 \cdots s_n = (S s_1^{-1}) (s_1 s_2 \cdots s_n) \subseteq Hs_1 s_2 \cdots s_n \subseteq \langle H , s_1 s_2 \cdots s_n \rangle .$$
\end{aid}

\begin{aid} \label{UniqueMaxClosure}
For any $h \in G$, we have
	$$ (H^G)^h 
	= \langle\, H^g \mid g \in G \,\rangle^h
	= \langle\, H^{gh} \mid g \in G \,\rangle
	= \langle\, H^x \mid x \in Gg \,\rangle
	= \langle\, H^x \mid x \in G \,\rangle
	= H^G
	,$$
so $H^G \normal G$.

Now, suppose $N$ is any normal subgroup that contains~$H$. Then, for every $g \in G$, we have $H^g \subseteq N^g = N$. Therefore $H^G = \langle\, H^g \mid g \in G \,\rangle 
\subseteq N$.

So $H^G$ is indeed the unique smallest normal subgroup of~$G$ that contains~$H$. This is well known.

\medbreak

It is also well known that if $K$ is any subgroup of~$G$ that contains $[G,G]$, then $K \normal G$. (So, in particular, $H \, [G,G] \normal G$.) To see this, note that if $k \in K$ and $g \in G$, then
	$$ k^g = g^{-1} k g = k (k^{-1} g^{-1} k g ) = k [k,g] \in K ,$$
so $K^g \subseteq K$.
\end{aid}

\begin{aid} \label{UniqueMaxHK}
The first equality is a special case of the well known fact that if $H$, $K$, and~$M$ are subgroups of~$G$, such that $HK = L$ and $H \subseteq M \subseteq HK$, then $M = H \cdot (M \cap K)$. To prove this fact, first note that the inclusion ($\supseteq$) is obvious, since $H$ and $M \cap K$ are contained in~$M$. Given $m \in M$, we know, by assumption, that $m \in HK$, so may write $m = h k$ with $h \in H$ and $k \in k$. Then $k = h^{-1} m \in M$, since $h \in H \supseteq M$ and $m \in M$. Therefore $k \in K \cap M$. So $m = hk \in H\cdot (M \cap K)$, as desired.

For the second inequality, notice that if $\langle x \rangle$ is a nontrivial cyclic group of order~$p^\ell$, then every proper subgroup of $\langle x \rangle$ is contained in~$\langle x^p \rangle$. Now simply let $\langle x \rangle = H^G \cap [G,G]$.
\end{aid}

\begin{aid} \label{OnlyInvertHPGG}
It is well known that if $[G,G]$ is cyclic (and $S$ is a generating set of~$G$, as usual), then
	$$ [G,G] = \bigl\langle [a,b]  \mid a,b \in S \bigr\rangle .$$
For the reader's convenience, we reproduce the proof of this that appears in \cite[Lem.~3.5]{GhaderpourMorris-Nilpotent}. Let $N = \bigl\langle [a,b]  \mid a,b \in S \bigr\rangle$. Then $N \normal G$, because $N$ is contained in the cyclic, normal subgroup $[G,G]$, and every subgroup of a cyclic, normal subgroup is normal. In $G/N$, every element of~$S$ commutes with all of the other elements of~$S$, so $G/N$ is abelian. Hence $[G,G] \subseteq N$.

In our case, we have an isomorphism $\varphi \colon [G,G] \to \integer_{p^k}$. Since the multiples of~$p$ form a proper subgroup of~$\integer_{p^k}$, we know, from the preceding paragraph, that there exist $a,b \in [G,G]$, such that $\varphi \bigl( [a,b] \bigr)$ is not a multiple of~$p$. Then $\gcd \bigl( \varphi \bigl( [a,b] \bigr), p^k \bigr) = 1$, so $\bigl\langle \varphi \bigl( [a,b] \bigr) \bigr\rangle = \integer_{p^k}$. Since $\varphi$ is an isomorphism, this tells us $\langle [a,b] \rangle = [G,G]$.
\end{aid}

\begin{aid} \label{OnlyInvertHPHz}
We have 
	$$ G = \langle a,b \rangle \subseteq \langle H z, Hz \rangle \subseteq \langle H, z \rangle = H \langle z \rangle ,$$
since $\langle z \rangle \normal G$. (Recall that every subgroup of a cyclic, normal subgroup is normal.)
\end{aid}

\begin{aid} \label{OnlyInvertHPan}
Note that $\ah = a z^{-1}$ must centralize~$\integer_{p^k}$, since $a$ and~$z$ both centralize it. Hence, for any~$k$, we have
	$$ H a^k = H (\ah z)^k = H \ah^k z^k = H z^k ,$$
since $\ah \in H$.

Also, since $H \langle z \rangle = G$, it is obvious that $[H, Hz, Hz^2,\ldots,Hz^{n-1}, H]$ is a hamiltonian cycle. 
\end{aid}

\begin{aid} \label{A4xZ2NoPath}
An exhaustive search will quickly show there is no hamiltonian path (see \cite[p.~266]{HolsztynskiStrube} for a picture of the digraph), but we use some theory instead of case-by-case analysis.

Suppose $L = (s_i)_{i=1}^{23}$ is a hamiltonian path in $\Cayd(G; a,b)$. Each left coset $g \langle b \rangle$ of $\langle b \rangle$ cannot contain more than two $b$-edges (since $[g](b^3)$ is a cycle). Since there are $8$ such cosets, this means that $L$ cannot have more than $16$ $b$-edges.  In fact, there must be strictly less than $16$, because otherwise $L$ would contain the cycle $(b^2, a)^6$.

On the other hand, the argument in \pref{ProveMilnor} tells that if a left coset $g \langle b \rangle$ does not contain either the initial vertex or the terminal vertex, then it does contain two $b$-edges. Thus, there cannot be more than two cosets that do not have exactly two $b$-edges. Furthermore, the same line of reasoning shows that each coset must have at least one $b$-edge. So $L$ has at least $16 - 2 = 14$ $b$-edges.

In summary, the number of $b$-edges is either $14$ or~$15$. Therefore, exactly two regular cosets travels by~$b$, and $2$ or~$3$ vertices in the terminal coset travel by~$b$. 

Assume $e$ is the initial vertex of~$L$, so any vertex in the terminal coset can be written in the form $a^{-1} (b a^{-1})^i = b^{-1} (a b^{-1})^{5-i}$, with $0 \le i \le 5$. Let $d$ be the number of vertices that travel by~$b$ in the special coset. 
Then \cref{TermCosetTravels} tells us:
	\begin{itemize}
	\item $a (b a)^i$ travels by~$b$ iff $0 \le i < d$, 
	\item $b^{-1} (a b^{-1})^j$ travels by~$a$ iff $0 \le j \le 5 - d$, 
	and
	\item $a(b a)^d$ is  the terminal vertex.
	\end{itemize}
Since $5 - d \ge 2$, we know that $b^{-1}$ and $b^{-1}(a b^{-1})$ both travel by~$a$. So $b^{-1} a$ and $(b^{-1} a)^2 = \bigl( b^{-1}(a b^{-1}) \bigr) a$ cannot travel by~$a$. (Otherwise, $L$ would contain a cycle of the form $[g](a^2)$. So the two regular cosets $(b^{-1} a) \langle ba \rangle$ and $(b^{-1}a)^2 \langle ba \rangle$ must travel by~$b$. 

Letting $z = (e,1)$ be the nonidentity element of $\{e\} \times \integer_2$, we have $(ab^{-1})^3 = (ab)^3 = z$, so $(ab^{-1})^3 (ab)^3 = e$. Therefore, for $g = (ab)^2$, we have
	\begin{align*}
	g &= (ab)^2 = [(ab^{-1})^3(ab)]^{-1} = (b^{-1}a)(ba)^3 \in (b^{-1}a) \langle ba \rangle , \\
	gb &= (ab)^2 b = (ab)(ab^{-1}) = [(ab^{-1})^2 (ab)^2]^{-1} = (b^{-1} a)^2 (ba)^2  \in (b^{-1}a)^2 \langle ba \rangle , \\
	gb^2 &= (ab)^2 b^{-1} = a(ba) = a(ba)^i \text{ with $i = 1 < d$}
	,\end{align*}
so all three of these vertices travel by~$b$. This means that $L$ contains the cycle $[g](b^3)$, which contradicts the fact that $L$ is a (hamiltonian) path.
\end{aid}

\begin{aid} \label{abelianpathaid}
We reproduce the proof, since it is so short.

Let $S_0 = S \smallsetminus \{s\}$, for some $s \in S$. By induction on $\#S$, we may assume there is a hamiltonian path $(t_i)_{i=1}^m$ in $\Cayd \bigl( G / \langle s \rangle ; S_0 \bigr)$. Then
	$ \bigl( (s^{|s|-1}, t_i)_{i=1}^m, s^{|s|-1} \bigr) $
is a hamiltonian path in $\Cayd(G;S)$.
\end{aid}

\begin{aid} \label{RankinAid}
We sketch a short proof, since the argument in \cite[Thm.~4]{Rankin-Camp} is lengthy. We begin with a well-known, elementary observation.

\begin{lem*}[``Factor Group Lemma'' {\cite[\S2.2]{WitteGallian-survey}}]
Suppose
	\begin{itemize}
	\item $N$ is a cyclic, normal subgroup of~$G$,
	and
	\item $(s_i)_{i=1}^d$ is a hamiltonian cycle in $\Cayd(G/N; S)$.
	\end{itemize}
Then $\bigl( (s_i)_{i=1}^d \bigr)^{|N|}$ is a hamiltonian cycle in $\Cayd(G;S)$ if and only if $\langle s_1 s_2 \cdots s_d \rangle = N$.
\end{lem*}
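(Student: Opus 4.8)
The plan is to follow the vertices that the closed walk $W := \bigl((s_i)_{i=1}^d\bigr)^{|N|}$ traverses when issued from the identity, and then to observe that, since $W$ has exactly the right length, being a hamiltonian cycle is equivalent to hitting every vertex. (By vertex-transitivity there is no loss in starting at~$e$.) First I would fix notation: write $\nu = s_1 s_2 \cdots s_d$ and $g_j = s_1 \cdots s_j$ for $0 \le j \le d$, so $g_0 = e$ and $g_d = \nu$. Because $(s_i)_{i=1}^d$ is a hamiltonian cycle in $\Cayd(G/N;S)$, its length is $d = |G:N|$, the right cosets $Ng_0, Ng_1, \ldots, Ng_{d-1}$ are pairwise distinct (hence exhaust~$G$), and the walk closing up in $G/N$ says exactly that $Ng_d = N$, i.e. $\nu \in N$; consequently $\nu^{|N|} = e$ by Lagrange's theorem.

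Next I would describe $W$ explicitly. From any starting vertex~$h$, the block $[h](s_i)_{i=1}^d$ visits $hg_0, hg_1, \ldots, hg_{d-1}$ and then arrives at $h\nu$. Iterating this, the $(k+1)$-st block of $W$ (for $0 \le k < |N|$) starts at $\nu^k$ and visits precisely $\nu^k g_0, \ldots, \nu^k g_{d-1}$, and after the $|N|$-th block $W$ returns to $\nu^{|N|} = e$; so $W$ is a closed walk of length $d\,|N| = |G|$. The one point with any content is the next observation: since $\nu \in N$ we have $\nu^k g_j \in Ng_j$, so the vertices of $W$ that lie in the coset $Ng_j$ are exactly $\{\, \nu^k g_j : 0 \le k < |N| \,\} = \langle \nu \rangle g_j$, a set of size $|\langle\nu\rangle|$. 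Summing over the $d = |G:N|$ distinct cosets, $W$ visits exactly $|G:N| \cdot |\langle\nu\rangle|$ distinct vertices.

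To finish, I would note that a closed walk of length $|G|$ in a digraph on $|G|$ vertices is a hamiltonian cycle if and only if it meets every vertex — the $|G|$ positions along $W$ then force each vertex to be visited exactly once. Hence $W$ is a hamiltonian cycle if and only if $|G:N| \cdot |\langle\nu\rangle| = |G| = |G:N| \cdot |N|$, that is, $|\langle\nu\rangle| = |N|$; and since $\langle\nu\rangle \subseteq N$ this is equivalent to $\langle\nu\rangle = N$, which is the asserted condition $\langle s_1 s_2 \cdots s_d \rangle = N$.

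I do not expect a genuine obstacle here: the whole argument is elementary. The only things requiring care are keeping the left/right-coset conventions consistent with the definition of $K\backslash\Cayd(G;S)$ used in the paper, and the routine verification that $W$ really does return to~$e$ (which is where the hypothesis $\nu \in N$, hence $\nu^{|N|}=e$, is used).
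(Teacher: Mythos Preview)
Your proof is correct and follows essentially the same approach as the paper's: both identify the vertices visited by the walk as $\nu^{k}g_j$ (in the paper's notation, $(s_1\cdots s_d)^i\,s_1\cdots s_j$), observe that these lie in the coset $Ng_j$, and conclude that the walk is hamiltonian precisely when $\langle\nu\rangle = N$. Your version is slightly more explicit in counting vertices coset-by-coset, whereas the paper phrases the same computation as a unique-representation statement, but there is no substantive difference.
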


\begin{proof}
Since $(s_i)_{i=1}^{d-1}$ is a hamiltonian path in $\Cayd(G/N; S)$, we know that every element of~$G$ can be written uniquely in the form $x s_1 s_2 \cdots s_j$, with $x \in N$ and $0 \le j < d$. Therefore, we have the following equivalences:
	\begin{align*}
	&\langle s_1 s_2 \cdots s_d \rangle = N \\
	&\iff  \text{every element  of~$G$ can be written uniquely in the form $(s_1 s_2 \cdots s_d)^i s_1 s_2 \cdots s_j$,} \\
	& \hskip 1in \text{with $0 \le i < |N|$ and $0 \le j < d$} \\
	 &\iff \text{$\bigl( (s_i)_{i=1}^d \bigr)^{|N|}$ is a hamiltonian cycle}
	. \qedhere \end{align*} 
\end{proof}

\begin{proof}[\bf Proof of \cref{Rankin2gen}]
Let $d = |G : \langle a b^{-1} \rangle|$.

($\Rightarrow$) Suppose $C$ is a hamiltonian cycle in $\Cayd(G; a,b)$. Then \fullcref{HousmanThm}{regular} tells us that each left coset of $\langle a b^{-1} \rangle$ either travels by~$a$ or travels by~$b$ (since the cycle $C$ has no terminal vertex). Therefore $C = \bigl( (s_i)_{i=1}^d \bigr)^{|a b^{-1}|}$, for some $s_1,\ldots,s_d \in \{a,b\}$. Let $k$ (resp.~$\ell$) be the number of cosets that travel by~$a$ (resp.~$b$), so $k + \ell = d$, and $s_1 s_2 \cdots s_d = a^k b^\ell$ (since $G$ is abelian). The Factor Group Lemma tells us that $\langle s_1 s_2 \cdots s_d \rangle = \langle a b^{-1} \rangle$.

($\Leftarrow$) Since $(a^k, b^\ell)$ is a hamiltonian cycle in $\Cayd \bigl( G/ \langle a b^{-1} \rangle ; a, b \bigr)$, and $\langle a^k b^\ell \rangle = \langle a b^{-1} \rangle$, the Factor Group Lemma tells us that $(a^k, b^\ell)^d$ is a hamiltonian cycle.
\end{proof}
\end{aid}

\begin{aid} \label{EasyNoHam}
Let $G = \integer_n$ and $b = a + 1$. Then 
	$$\langle a - b \rangle = \langle a - (a + 1) \rangle = \langle -1 \rangle = \integer_n ,$$
so $|G : \langle a - b \rangle| = 1$. Therefore, if $\Cayd(G;a,b)$ has a hamiltonian cycle, then \cref{Rankin2gen} tells us there exist $k, \ell > 0$, such that $k + \ell = 1$ and $\gcd(ka + \ell b, n) = 1$. However, since $k + \ell = 1$, the sum $ka + \ell b$ must simply be either~$a$ or~$b$. By assumption, neither of these is relatively prime to~$n$. This is a contradiction.
\end{aid}

\begin{aid} \label{3genAbelMustBeCyclicAid}
See \cref{3genAbelMustBeCyclicSect} for an expanded proof of \cref{3genAbelMustBeCyclic} that includes appropriately modified excerpts from~\cite{LockeWitte}. 
\end{aid}

\newpage

\section{\texorpdfstring{Proof of \cref{3genAbelMustBeCyclic}}{A proof for abelian groups}} 
\label{3genAbelMustBeCyclicSect}

\vskip-0.5\baselineskip

\centerline{(adapted from the proof of \cite[Thm.~4.1($\Leftarrow$)]{LockeWitte})}

\medskip

\markboth{Proof of \cref{3genAbelMustBeCyclic} (\emph{On Cayley digraphs that do not have hamiltonian paths} by Dave Witte Morris)}{Proof of \cref{3genAbelMustBeCyclic} (\emph{On Cayley digraphs that do not have hamiltonian paths} by Dave Witte Morris)}

\thispagestyle{empty}

\newcommand{\qf}{\mathord{\mathcal E}}
\newcommand{\qc}{\mathord{\mathcal C}}

Let 
	\begin{itemize}
	\item $G$ be an abelian group (written additively), 
	and 
	\item $a,b,k \in G$, such that $k$ is an element of order~$2$. 
	\end{itemize}
Assume
	\begin{itemize}
	\item $\{a,b,b+k\}$ consists of three distinct, nontrivial elements of~$G$,
	\item $G$ is \underline{not} cyclic, 
	and 
	\item $\langle a, b, k \rangle = G$ (or, equivalently, $\Cayd(G;a,b,b+k)$ is connected).
	\end{itemize}
We will show that $\Cayd(G;a,b,b+k)$ has a hamiltonian cycle.

\medbreak

\begin{defn}
Let 
	\begin{itemize}
	\item $\qc$ be the set of all
spanning subdigraphs of $\Cayd(G;a,b,b+k)$ with invalence~$1$ and  outvalence~$1$
at each vertex (so each connected component is a directed cycle),
	and
	\item $\qf$ be the set of all elements of~$\qc$, such that, in each coset of
the subgroup $\{0,k\}$, one vertex travels by~$a$, and
the other vertex travels by either $b$~or~$b+k$. 
	\end{itemize}
 \end{defn}

\begin{defn} \label{H0-defn}
We construct an element $H_0$ of~$\qf$. 
The construction considers two cases.

\setcounter{case}{0}

\begin{case}
Assume $k \notin \langle a \rangle$.
 \end{case}
 In this case, every vertex~$v$ in~$G$ can be uniquely written in the form $x_v a + y_v
b + z_v k$ with $0 \le x_v < |a|$, $0\le y_v < |G : \langle a, k \rangle|$, and $0 \le
z_v < 2$.
 Let $H_0$ be the spanning subdigraph in which a vertex~$v \in G$ 
 \begin{itemize}
 \item travels by~$a$ if~$z_v = 0$;
 \item travels by~$b$ if $z_v = 1$ and $z_{v+b} = 1$; and
 \item travels by~$b+k$ otherwise.
 \end{itemize}
 (By construction, the vertices~$v$ that satisfy $z_v = 0$ are
both entered and exited via an $a$-arc in~$H_0$; the other
vertices are neither entered nor exited via an $a$-arc.)

\begin{case}
Assume $k \in \langle a \rangle$.
 \end{case}
 In this case, every vertex~$v$ in
$G$ can be uniquely written in the form $x_v a + y_v
b$ with $0 \le x_v < |a|$ and $0\le y_v < |G : \langle a \rangle|$. Let $H_0$ be the
spanning subdigraph in which a vertex~$v \in G$ 
 \begin{itemize}
 \item travels by~$a$ if~$x_v < |a|/2$;
 \item travels by~$b+k$ if $x_v \ge |a|/2$ and $1 \le x_{v+b} \le
|a|/2$; and
 \item travels by~$b$ otherwise.
 \end{itemize}
 (By construction, the vertices~$v$ that satisfy $1 \le x_v \le
|a|/2$ are precisely those that are entered via an $a$-arc
in~$H_0$.) 
 \end{defn}

\begin{lem}[{}{\cite[Lem.~2.1]{LockeWitte}}] \label{parity-lemma}
Suppose $H$~and~$H'$ belong to~$\qc$. Let
$u_1$, $u_2$, and~$u_3$ be three vertices of~$H$, and let $v_i$
be the vertex that follows~$u_i$ in~$H$. Assume  $H'$ has the
same arcs as~$H$, except:
 \begin{itemize}
 \item instead of the arcs from~$u_1$ to~$v_1$, from~$u_2$
to~$v_2$, and from~$u_3$ to~$v_3$, 
 \item there are arcs from~$u_1$ to~$v_2$, from~$u_2$ to~$v_3$,
and from~$u_3$ to~$v_1$.
 \end{itemize}
 Then the number of components of~$H$ has the same parity as the
number of components of~$H'$.
 \end{lem}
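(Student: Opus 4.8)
The plan is to restate the lemma in terms of permutations. Since every vertex of an element $H$ of~$\qc$ has outvalence~$1$, the rule ``send each vertex to the head of its unique outgoing arc'' defines a permutation $\sigma_H$ of the vertex set~$G$, and the connected components of~$H$ are precisely the cycles of~$\sigma_H$. So the claim is equivalent to asserting that $\sigma_H$ and $\sigma_{H'}$ have the same number of cycles modulo~$2$, and everything reduces to an elementary fact about symmetric groups.

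Next I would pin down how $\sigma_{H'}$ differs from~$\sigma_H$. By the description of~$H'$, the two permutations agree at every vertex other than $u_1,u_2,u_3$, while $\sigma_{H'}(u_1) = v_2 = \sigma_H(u_2)$, $\sigma_{H'}(u_2) = v_3 = \sigma_H(u_3)$, and $\sigma_{H'}(u_3) = v_1 = \sigma_H(u_1)$. Hence $\sigma_{H'} = \sigma_H \circ \rho$, where $\rho$ is the $3$-cycle $(u_1\ u_2\ u_3)$ that fixes all other vertices (precomposition: $(\sigma_H\circ\rho)(u_1) = \sigma_H(u_2) = v_2$, etc.). It is worth noting that this identity holds verbatim even if some of $v_1,v_2,v_3$ coincide with each other or with some~$u_j$, so no case analysis is needed here.

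Finally I would invoke the standard fact that composing a permutation of a finite set with a transposition changes its number of cycles (fixed points included) by exactly~$\pm1$: the transposition $(x\ y)$ either merges the cycles through~$x$ and~$y$ into one, or splits a single cycle into two. Writing $\rho = (u_1\ u_3)(u_1\ u_2)$ exhibits $\rho$ as a product of two transpositions, so passing from $\sigma_H$ to $\sigma_{H'}=\sigma_H\circ(u_1\ u_3)\circ(u_1\ u_2)$ alters the number of cycles by an even amount. Therefore the number of components of~$H$ and of~$H'$ have the same parity, as desired. (Equivalently: $\mathrm{sgn}(\sigma_{H'}) = \mathrm{sgn}(\sigma_H)\,\mathrm{sgn}(\rho) = \mathrm{sgn}(\sigma_H)$ because a $3$-cycle is even, and on the fixed finite set~$G$ the sign of a permutation is $(-1)^{|G|}$ times $(-1)^{\text{number of cycles}}$.)

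I do not expect a genuine obstacle in this lemma; the only point requiring a little care is bookkeeping the composition on the correct side, so that the correction factor $\rho$ really is a single $3$-cycle (hence an even permutation) independent of any coincidences among the vertices involved.
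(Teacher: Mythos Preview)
Your argument is correct, and it rests on the same core observation as the paper's proof: the passage from $H$ to $H'$ amounts to composing with a $3$-cycle, which is an even permutation and therefore preserves the parity of the cycle count. The paper's presentation packages this a bit differently: it localizes to a ``first-return'' permutation $\sigma$ of the three-element set $\{1,2,3\}$ (defined by following $H$ from $u_i$ until it first re-enters $\{u_1,u_2,u_3\}$), checks by cases that the parity of~$\sigma$ is opposite to the parity of the number of components of~$H$ meeting $\{u_1,u_2,u_3\}$, and then observes that $\sigma' = \sigma\cdot(1\,2\,3)$ has the same parity as~$\sigma$. Your global version---working with the successor permutation $\sigma_H$ on all of~$G$ and writing $\sigma_{H'}=\sigma_H\circ(u_1\,u_2\,u_3)$---reaches the same conclusion more directly and sidesteps that case analysis; the paper's local formulation, on the other hand, is what later proofs in the paper (e.g.\ \cref{amalgamate} and \cref{IfabkNotG}) actually exploit, since they need to track which of $u_1,u_2,u_3$ end up in the same component.
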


\begin{proof}  Let $\sigma$ be the permutation of~$\{1,2,3\}$
defined by: $u_{\sigma(i)}$ is the vertex that is encountered
when~$H$ first reenters $\{u_1,u_2,u_3\}$ after~$u_i$. Thus, if
$\sigma$ is the identity permutation, then $u_1,u_2,u_3$ lie on
three different components of~$H$. On the other hand, if
$\sigma$ is a $2$-cycle, then two of $u_1,u_2,u_3$ are on the
same component, but the third is on a different component.
Similarly, if $\sigma$ is a $3$-cycle, then all three of these
vertices are on the same component. Thus, the parity of the
number of components of~$H$ that intersect $\{u_1,u_2,u_3\}$ is
precisely the opposite of the parity of the
permutation~$\sigma$. 

There is a similar permutation~$\sigma'$ for~$H'$. From the
definition of~$H'$, we see that $\sigma'$ is simply the product
of~$\sigma$ with the $3$-cycle $(1,2,3)$, so $\sigma'$ has the
same parity as~$\sigma$, because $3$-cycles are even
permutations. Thus, the parity of the number of components
of~$H$ that intersect $\{u_1,u_2,u_3\}$ is the same as the
parity of the number of components of~$H'$ that intersect
$\{u_1,u_2,u_3\}$. Because the components that do not intersect
$\{u_1,u_2,u_3\}$ are exactly the same in~$H$ as in~$H'$, this
implies that the number of components in~$H$ has the same parity
as the number of components in~$H'$.
 \end{proof}

\begin{lem}[{}{\cite[Lem.~4.2]{LockeWitte}}] \label{amalgamate}
Assume $H \in \qf$, and
suppose $u$ is a vertex of~$H$ that travels by~$a$, such that
$u$, $u+k$, and $u + a + k$ are on three different components
of~$H$. Then there is an element~$H'$ of~$\qf$, with exactly the
same arcs as~$H$, except the arcs leaving~$u$ and~$u+k$, and the
arc entering~$u+a+k$, such that $u$, $u+k$, and~$u+a+k$ are all
on the same component of~$H'$.
 \end{lem}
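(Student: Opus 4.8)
The plan is to produce $H'$ from $H$ by a single ``three-arc rotation'' of exactly the kind described in \cref{parity-lemma}, choosing the three rotated arcs so that the rotation simultaneously (i) stays inside $\qf$ and (ii) splices the three given components into one.

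The first step is to pin down the three arcs. Since $u$ travels by~$a$, its successor in~$H$ is $u+a$; since $H\in\qf$, its partner $u+k$ travels by~$b$ or~$b+k$, say by $s\in\{b,b+k\}$, with successor $u+k+s$. Let $P$ be the predecessor of $u+a+k$ in~$H$. The one genuinely substantive observation is that the arc $P\to u+a+k$ is \emph{not} an $a$-arc: the only vertex that can reach $u+a+k$ by an $a$-arc is $u+k$, and $u+k$ does not travel by~$a$ (because $u$ does and $H\in\qf$). Consequently $P$ travels by~$b$ or~$b+k$ in~$H$; moreover $P\notin\{u,u+k\}$ (the step $u\to u+a+k$ would be the non-generator $a+k$, and $P=u+k$ would make $u+k$ travel by~$a$) and $P\ne u+a$ (the step $u+a\to u+a+k$ would be the non-generator~$k$). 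A short check, using that $a$, $b$, $b+k$ are distinct and nontrivial and $|k|=2$, shows that $u,u+k,P$ are distinct and that $u+a$, $u+k+s$, $u+a+k$ are distinct.

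Next I define $H'$ by deleting the arcs $u\to u+a$, $u+k\to u+k+s$, $P\to u+a+k$ and inserting $u\to u+k+s$, $u+k\to u+a+k$, $P\to u+a$. Each inserted arc is legal: $k+s\in\{b,b+k\}$, the step $u+k\to u+a+k$ is~$a$, and if $P$'s old step is $g\in\{b,b+k\}$ then its new step is $g+k\in\{b,b+k\}$. Since the three new arcs have the same tails and the same heads as the three deleted ones, every vertex still has invalence and outvalence~$1$, so $H'\in\qc$. For $H'\in\qf$ I only have to inspect the cosets $\{u,u+k\}$ and $\{P,P+k\}$, since every other $\{0,k\}$-coset is untouched: in $H'$, $u+k$ travels by~$a$ and $u$ by $k+s\in\{b,b+k\}$; and $P$ still travels in $\{b,b+k\}$ while $P+k$ is unchanged, so in each coset exactly one vertex travels by~$a$.

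Finally I check the splice. Let $C_1\ni u$, $C_2\ni u+k$, $C_3\ni u+a+k$ be the three distinct components of~$H$. Tracing from~$u$ in~$H'$: the new arc $u\to u+k+s$ enters $C_2$, we run along the (unchanged) arcs of $C_2$ back to $u+k$, the new arc $u+k\to u+a+k$ enters $C_3$, we run along $C_3$ back to~$P$, the new arc $P\to u+a$ enters $C_1$, and we run along $C_1$ back to~$u$. This is a single directed cycle whose vertex set is exactly $C_1\cup C_2\cup C_3$, so $u$, $u+k$, $u+a+k$ now lie on one component, while every other component of~$H'$ is a component of~$H$. The only real obstacle is the observation above that no $a$-arc enters $u+a+k$ (which is what makes the reroute $P\to u+a$ legal and keeps $H'$ in $\qf$); after that the argument is bookkeeping.
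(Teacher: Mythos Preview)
Your proof is correct and follows essentially the same approach as the paper's: set $u_1=u$, $u_2=u+k$, $u_3=P$ (the predecessor of $u+a+k$), verify that the three replacement arcs $u_1\to v_2$, $u_2\to v_3$, $u_3\to v_1$ are legitimate arcs of $\Cayd(G;a,b,b+k)$, and conclude via the three-arc rotation of \cref{parity-lemma}. The only difference is cosmetic: the paper appeals to ``the proof of \cref{parity-lemma}'' (where $\sigma$ is the identity, so $\sigma'=(1\,2\,3)$) for the splice, whereas you trace the new cycle $C_1\to C_2\to C_3\to C_1$ explicitly; and you spell out the $\qf$ verification on the two affected cosets, which the paper leaves implicit.
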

 
 \begin{proof}
 Let
 	\begin{itemize}
	\item $u_1 = u$, 
	\item $u_2 = u + k$, 
	\item $v_3 = u + a + k$, 
	\item $u_3$ be the vertex that precedes $u_3$ on~$H$,
	and
	\item $v_1$ and~$v_2$ be the vertices that follow $u_1$ and~$u_2$, respectively, on~$H$.
	\end{itemize}
Note:
	\begin{itemize}
	\item Since $v_3 = u + a + k = u_2 + a$, there is an edge from $u_2$ to~$v_3$.
	\item Since $u_2 = u + k$ and $v_3 = u + a + k$ are not in the same component, we know that $u_2$ does not travel by~$a$. Therefore, it travels by either $b$ or~$b + k$, so $v_2 \in \{u + b, u + b + k\}$. Therefore, there is an edge from $u_1 = u$ to~$v_2$.
	\item Since $u + k = u_2$ does not travel by~$a$ (and $H$ is in $\qf$), we know that $u_1 = u$ travels by~$a$. So $v_1 = u_1 + a = u + a$. 
	\item Since $v_3 - a = u + k = u_2$ does not travel by~$a$, we know that $u_3$ travels by either $b$ or~$b + k$, so 
		$$u_3 \in \{v_3 - b, v_3 - (b+k)\} = \{u + a - b + k, u + a - b \} = \{v_1 - (b+k), v_1 - b\} ,$$
	so there is an edge from $u_3$ to~$v_1$.
	\end{itemize}
Hence, the proof of \cref{parity-lemma} provides us with the desired $H' \in \qf$.
 \end{proof}

The same argument yields the following:

\begin{lem}[{}{\cite[Lem.~4.2]{LockeWitte}}] \label{amalgamate23}
Assume $H \in \qf$, and
suppose $u$ is a vertex of~$H$ that travels by~$a$, such that
	\begin{itemize}
	\item $u+k$ and $u + a + k$ are in the same component of~$H$,
	but
	\item $u$ is in a different component.
	\end{itemize}
If $v$ is the vertex that immediately follows~$u+k$ in~$H$, then there is an element~$H'$ of~$\qf$, with exactly the
same arcs as~$H$, except the arcs leaving~$u$ and~$u+k$, and the
arc entering~$u+a+k$, such that 
	\begin{itemize}
	\item $u$ and~$v$ are in the same component of~$H'$,
	but
	\item $u + a + k$ is in a different component of~$H'$.
	\end{itemize}
 \end{lem}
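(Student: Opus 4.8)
The plan is to follow the proof of \cref{amalgamate} essentially verbatim: the element $H'$ is produced by the very same crossing construction from the proof of \cref{parity-lemma}, and the only genuinely new work is recomputing which components the relevant vertices land on. I would set $u_1 = u$, $u_2 = u+k$, $v_3 = u+a+k$, let $u_3$ be the predecessor of $v_3$ in~$H$, and let $v_1, v_2$ be the successors of $u_1, u_2$ in~$H$; note that $v = v_2$. The digraph $H'$ is then obtained by the crossing that replaces the arcs out of $u_1,u_2,u_3$ so that $u_1 \to v_2$, $u_2 \to v_3$, $u_3 \to v_1$.

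First I would re-verify — exactly as in the proof of \cref{amalgamate} — that these three new arcs actually occur in $\Cayd(G;a,b,b+k)$ and that the resulting $H'$ lies in~$\qf$. The point worth stressing is that these verifications use only the hypothesis that $u$ travels by~$a$ together with $H\in\qf$ (which then forces $u_2 = u+k$ not to travel by~$a$); they make no use of the component structure of~$H$, so they carry over unchanged. Thus $v_2 \in \{u+b, u+b+k\}$ (giving $u_1 \to v_2$), $v_3 = u_2 + a$ (giving $u_2 \to v_3$), and — using $k = -k$ — $u_3 \in \{v_1 - b, v_1 - (b+k)\}$ (giving $u_3 \to v_1$); and inside the coset $\{u_1,u_2\}$ the construction merely interchanges which of the two vertices travels by~$a$, so $H' \in \qf$.

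The one new ingredient is the component bookkeeping via the first-return permutation $\sigma$ from the proof of \cref{parity-lemma}. Here $u_1 = u$ lies on one component of~$H$, while $u_2 = u+k$ and $u_3$ (the predecessor of $u+a+k$) both lie on the component of $u+a+k$; moreover $u_1,u_2,u_3$ are distinct and $u_2 \ne u_3$ because $u+k$ does not travel by~$a$. Hence $\sigma$ fixes~$1$ and interchanges $2$ and~$3$, so the corresponding permutation for~$H'$ is $\sigma$ composed with the $3$-cycle $(1,2,3)$, namely the transposition $(1,3)$, which fixes~$2$. By the analysis in the proof of \cref{parity-lemma}, a transposition fixing~$2$ means $u_1$ and $u_3$ lie on one component of~$H'$ while $u_2$ lies on a different one. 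Since $H'$ contains the arcs $u_1 \to v_2 = v$ and $u_2 \to v_3 = u+a+k$, it follows that $u$ and $v$ lie on the same component of~$H'$, whereas $u+a+k$ lies on a different component — which is exactly the assertion.

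I expect the only delicate point to be getting the composition order right when passing from $\sigma$ to the first-return permutation of~$H'$ (equivalently, carefully tracking which of $v_1,v_2,v_3$ is preceded by which of $u_1,u_2,u_3$ in~$H'$): an off-by-a-transposition slip there would incorrectly swap the roles of $v$ and $u+a+k$ in the conclusion. Everything else is a direct transcription of the proof of \cref{amalgamate}.
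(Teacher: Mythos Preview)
Your proposal is correct and follows exactly the approach the paper intends: the paper gives no separate proof of \cref{amalgamate23}, merely noting ``the same argument yields the following,'' and you have correctly filled in the details, including the one genuinely new step of computing the first-return permutation $\sigma = (2\,3)$ and $\sigma' = \sigma\circ(1\,2\,3) = (1\,3)$ to track the component structure of~$H'$. Your observation that the arc-verification steps depend only on $H\in\qf$ and ``$u$ travels by~$a$'' (not on the component hypotheses) is exactly the point, and your distinctness check $u_2\neq u_3$ is a nice touch the paper leaves implicit.
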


\begin{lem} \label{IfabkNotG}
If $\langle a - b , k \rangle \neq G$, then $\Cayd(G; a, b, b+k)$ has a hamiltonian cycle.
\end{lem}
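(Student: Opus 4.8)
The plan is to start from the digraph $H_0 \in \qf$ constructed in \cref{H0-defn}, each of whose components is a directed cycle, and to amalgamate all of its components into a single directed cycle, which is then a hamiltonian cycle in $\Cayd(G; a, b, b+k)$. The only tools needed are \cref{amalgamate} and \cref{amalgamate23}: at a vertex~$u$ that travels by~$a$, if $u$, $u+k$, and $u+a+k$ lie on three distinct components then \cref{amalgamate} replaces the current element of~$\qf$ by one in which these three components have been merged into one; and if instead $u+k$ and $u+a+k$ lie on a single component distinct from that of~$u$, then \cref{amalgamate23} performs a corresponding surgery. So it suffices to show that, starting from~$H_0$, one can iterate such moves until a single component remains.

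First I would record the component structure of~$H_0$. Let $\pi \colon G \to G/\langle k \rangle$ be the quotient map. In Case~1 of \cref{H0-defn} (when $k \notin \langle a \rangle$), the components built from $a$-arcs are exactly the cosets of~$\langle a \rangle$ inside $\{\, v : z_v = 0 \,\}$, indexed by the cosets of~$\langle a, k \rangle$; moreover $\pi$ restricts to a bijection of $\{\, v : z_v = 1 \,\}$ onto $G/\langle k \rangle$, and two vertices with $z_v = z_w = 1$ lie on a common component (necessarily one built from $b$- and $(b+k)$-arcs) if and only if $v \equiv w \pmod{\langle b, k \rangle}$, so there are $|G : \langle a, k \rangle| + |G : \langle b, k \rangle|$ components in all. (Case~2, when $k \in \langle a \rangle$, is analogous, the splitting now being governed by whether $x_v < |a|/2$.)

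Next I would introduce the amalgamation graph~$\Gamma$ on the set of components of~$H_0$, joining two components whenever some vertex~$u$ that travels by~$a$ provides an application of \cref{amalgamate} or \cref{amalgamate23} merging a set of components containing the two. Using the coset description above, each $a$-cycle~$C$ is joined in~$\Gamma$ to the one or two $b$-cycles meeting $C + k$, with the two coinciding precisely when $a \in \langle b, k \rangle$; thus the edges of~$\Gamma$ are completely controlled by the subgroups $\langle a, k \rangle$ and $\langle b, k \rangle$. The crucial claim is that the hypothesis $\langle a - b, k \rangle \neq G$ forces~$\Gamma$ to be connected. Here one uses that $a \equiv b \equiv b+k \pmod{\langle a - b, k \rangle}$ and that $G/\langle a - b, k \rangle = \langle \overline{a} \rangle$ is cyclic of order $|G : \langle a - b, k \rangle| \geq 2$: this slack between the $\langle a \rangle$-, $\langle b \rangle$-, and $\langle k \rangle$-directions prevents any nonempty proper set of components from being closed under the adjacency relation of~$\Gamma$. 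Once $\Gamma$ is known to be connected, a short induction on the number of components --- checking at each step that the surgery keeps us within~$\qf$ and that the required adjacencies persist --- produces the desired spanning directed cycle.

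The main obstacle is exactly the verification that~$\Gamma$ is connected: this calls for a careful case analysis (Case~1 versus Case~2 of \cref{H0-defn}, and within each the subcases $a \in \langle b, k \rangle$ and $a \notin \langle b, k \rangle$) together with careful bookkeeping of the cosets to which the components correspond, and it is here --- and only here --- that the hypothesis $\langle a - b, k \rangle \neq G$ enters. This is precisely the content of Case~3 of the proof of \cite[Thm.~4.1($\Leftarrow$)]{LockeWitte}, so I would carry it out by adapting that argument, replacing $\integer_{2k}$ throughout by the general abelian group~$G$; the examples in \cref{LockeWitte} confirm that the hypothesis cannot be dropped.
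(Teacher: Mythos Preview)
Your plan has a fatal parity obstruction. Both \cref{amalgamate} and \cref{amalgamate23} are instances of the $3$-swap of \cref{parity-lemma}, so each application preserves the parity of the number of components; starting from~$H_0$ you can therefore reach a single cycle only when $H_0$ has an \emph{odd} number of components. But the oddness result (\cref{odd-components}) is proved under the \emph{opposite} hypothesis $\langle a-b,k\rangle = G$, via the decomposition $G = \langle a-b\rangle \oplus \langle k\rangle$; when $\langle a-b,k\rangle \neq G$ the count in \cref{NumCompsH0} can perfectly well be even. A concrete instance: $G = \integer_2 \times \integer_2 \times \integer_2$ with $k=(1,0,0)$, $a=(0,1,0)$, $b=(0,0,1)$. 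Here $\langle a-b,k\rangle$ has index~$2$ in~$G$, yet $H_0$ has $|G:\langle a,k\rangle| + |G:\langle b,k\rangle| = 2+2 = 4$ components, so no sequence of your moves can ever reduce to one. Connectivity of your amalgamation graph~$\Gamma$ would not rescue this; it is simply the wrong invariant.

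The paper's argument (which \emph{is} Case~3 of \cite[Thm.~4.1($\Leftarrow$)]{LockeWitte}, but not as you describe it) neither starts from~$H_0$ nor stays inside~$\qf$. It works instead with the family of spanning $1$-regular subdigraphs in which $a$-arcs occur only at vertices of the proper subgroup $\langle a-b,k\rangle$ (one in each coset $\{v,v+k\}$ there), every other vertex travelling by $b$ or $b+k$. Since $b$ generates $G/\langle a-b,k\rangle$, every component of any such digraph meets $\langle a-b,k\rangle$; one takes a component-minimizing member~$H$ and, at a suitable $u \in \langle a-b,k\rangle$, combines the $3$-swap with an additional $2$-swap on the arcs \emph{entering} $u$ and $u+k$ (replacing $w_1\to u$, $w_2\to u+k$ by $w_1\to u+k$, $w_2\to u$). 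This extra move can change the parity of the component count, and it is precisely what lets the minimality argument go through without any odd-components lemma.
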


\begin{proof}
There are many spanning subdigraphs of $\Cayd(G; a, b, b+k)$ in which 
 \begin{itemize}
 \item every vertex has invalence~$1$ and outvalence~$1$,
 \item
 every vertex not in $\langle a-b,k \rangle$ travels by either
$b$~or~$b+k$, 
and
 \item for each vertex $v \in \langle a-b,k \rangle$, one of
$v$~and~$v+k$ travels by~$a$, and the other travels by either
$b$ or~$b+k$.
 \end{itemize}
 Among all such digraphs, let $H$ be one in which the number of
components is minimal.

We claim that $H$ is a hamiltonian cycle. If not, then $H$ has
more than one component. Because $\langle a,b,k \rangle =
G$, we know that $b$ generates the quotient group
$G/ \langle a-b,k \rangle$, so every component
of~$H$ intersects $\langle a-b,k \rangle$, and hence either 
 \begin{itemize}
 \item there is some vertex~$u$ in $\langle a-b,k \rangle$ such
that $u$~and~$u+k$ are in different components of~$H$; or 
 \item for all $v \in \langle a-b,k \rangle$, the vertices
$v$~and~$v+k$ are in the same component of~$H$, but there is
some vertex~$u$ in $\langle a-b,k \rangle$ such that $u$
and~$u+(a-b)$ are in different components of~$H$.
 \end{itemize}
 In either case, let $u_1$ be the one of $u$~and~$u+k$ that
travels by~$a$. 

Let $v_1 = u_1 + a$. Let $u_2 = u_1 + k$, and let $v_2 \in u_2 +
\{b,b+k\}$ be the vertex that follows~$u_2$ in~$H$. Finally, let
$v_3 = v_1 + k$, and let $u_3\in v_3 - \{b,b+k\}$ be the vertex
that \emph{precedes}~$v_3$ in~$H$. The choice of~$u_1$ implies
that $u_1$, $u_2$ and~$u_3$ do not all belong to the same
component of~$H$.

Let $w_1$ and~$w_2$ be the vertices that \emph{precede} $u_1$
and~$u_2$, respectively, on~$H$. (So $w_1 = w_2 + k$.)

Let $\sigma$ be the permutation of $\{1,2,3\}$ defined in the
proof of Lemma~\ref{parity-lemma}. If $\sigma$ is an even
permutation, let $H_1 = H$; if $\sigma$ is an odd permutation,
let $H_1$ be the element of~$\qc$ that has the same arcs as~$H$,
except:
 \begin{itemize}
 \item instead of the arcs from~$w_1$ to~$u_1$, and from~$w_2$
to~$u_2$, 
 \item there are arcs from~$w_1$ to~$u_2$, and from~$w_2$
to~$u_1$.
 \end{itemize}
 In either case, the permutation $\sigma_1$ for~$H_1$ is even.
Thus, $\sigma_1$ is either trivial or a $3$-cycle. If it is a
$3$-cycle, then $u_1$, $u_2$ and~$u_3$ are all contained in a
single component of~$H_1$, so $H_1$ has less components
than~$H$, which contradicts the minimality of~$H$. Thus,
$\sigma_1$ is trivial.

Let $H'$ be the element of~$\qc$ that has the same arcs
as~$H_1$, except:
 \begin{itemize}
 \item instead of the arcs from~$u_1$ to~$v_1$, from~$u_2$
to~$v_2$, and from~$u_3$ to~$v_3$, 
 \item there are arcs from~$u_1$ to~$v_2$, from~$u_2$ to~$v_3$,
and from~$u_3$ to~$v_1$.
 \end{itemize}
 Because $\sigma_1$ is trivial, we see that the permutation
$\sigma '$ for~$H'$ is the $3$-cycle $(1,2,3)$. Hence,  $u_1$,
$u_2$ and~$u_3$ are all contained in a single component of~$H'$,
so $H'$ has less components than~$H$, which contradicts the
minimality of~$H$.
\end{proof}

Thanks to \cref{IfabkNotG}, we may assume, henceforth, that $\langle a - b , k \rangle = G$. On the other hand, since $G$ is not cyclic, we have $\langle a - b \rangle \neq G$. Therefore, since $|k| = 2$, we conclude that 
	$$G = \langle a - b \rangle \oplus \langle k \rangle .$$
Since $G$ is not cyclic (and $|k| = 2$), this implies 
	$$ \text{$a - b$ has even order} .$$
It also implies that we may write 
	$$ \text{$a = a' + k'$ and $b = b' + k''$ for some (unique) $a',b' \in \langle a - b \rangle$ and $k', k'' \in \langle k \rangle$} . $$

\begin{lem} \label{NumCompsH0}
The number of connected components in~$H_0$ is
$$ \begin{cases}
 |G : \langle a, k \rangle| +  |G : \langle b, k \rangle|
& \text{if $k \notin \langle a \rangle$}  , \\
 |G : \langle b, k \rangle|
& \text{if $k \in \langle a \rangle$} .
\end{cases}$$
\end{lem}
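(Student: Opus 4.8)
The plan is to treat the two cases of \cref{H0-defn} separately, in each case stratifying the vertices of~$H_0$ by the coordinates used in its construction and then identifying the cycle structure of~$H_0$ with the permutation ``translate by~$b$'' on a suitable quotient of~$G$.

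Consider first the case $k\notin\langle a\rangle$. Since adding~$a$ alters only the coordinate~$x_v$ (reduced modulo~$|a|$), the coordinate~$z_v$ is constant on each coset of~$\langle a\rangle$; hence the set of vertices with $z_v=0$ is a union of cosets of~$\langle a\rangle$, of which there are $|G:\langle a,k\rangle|$, and each such coset travels by~$a$ and so is a directed cycle of length~$|a|$ in~$H_0$. Because a vertex with $z_v=1$ travels by $b$ or $b+k$ and in either case lands on a vertex whose third coordinate is again~$1$, no arc of~$H_0$ runs between the ``$z=0$'' vertices and the ``$z=1$'' vertices, so these $|G:\langle a,k\rangle|$ cycles are components of~$H_0$. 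For the remaining vertices, I would project modulo~$\langle k\rangle$: the vertices with $z_v=1$ map bijectively onto $G/\langle k\rangle$ (as $|k|=2$), and every $b$- or $(b+k)$-arc among them becomes translation by $b+\langle k\rangle$. Hence the components contained in the ``$z=1$'' part correspond to the cosets of $\langle b+\langle k\rangle\rangle=\langle b,k\rangle/\langle k\rangle$ in $G/\langle k\rangle$; there are $|G/\langle k\rangle|\big/\big(|\langle b,k\rangle|/2\big)=|G:\langle b,k\rangle|$ of them, and adding the two contributions yields $|G:\langle a,k\rangle|+|G:\langle b,k\rangle|$.

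Now consider the case $k\in\langle a\rangle$, so $|a|$ is even and $(|a|/2)a=k$. Here the $a$-arcs of~$H_0$ no longer close up; using that each vertex of~$H_0$ has invalence and outvalence~$1$, I would check that the $a$-arcs assemble into one maximal directed path per coset of~$\langle a\rangle$, running through the vertices with $x$-coordinate $0,1,\dots,|a|/2$, entered only at its $x=0$ vertex and left only at its $x=|a|/2$ vertex, while the vertices with $x$-coordinate in $\{|a|/2+1,\dots,|a|-1\}$ lie on no $a$-arc at all. Contracting each maximal $a$-path to its $x=0$ endpoint therefore produces a spanning subdigraph~$H_0'$ with the same number of components as~$H_0$, all of whose arcs are $b$- or $(b+k)$-arcs, and whose vertex set reduces bijectively modulo~$\langle k\rangle$ onto $G/\langle k\rangle$. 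As in the first case, every arc of~$H_0'$ becomes translation by $b+\langle k\rangle$ under this identification --- the key points being that the exit vertex of a contracted path is obtained from its $x=0$ representative by adding $(|a|/2)a=k$, hence is congruent to it modulo~$\langle k\rangle$, and that any vertex reached by a $b$- or $(b+k)$-arc which lies on a nontrivial $a$-path is necessarily the $x=0$ start of that path (its interior and terminal vertices being entered only by $a$-arcs). Consequently $H_0'$ has $|G:\langle b,k\rangle|$ components, which is the asserted count in this case.

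The main work is the bookkeeping of the second case: verifying that contracting the $a$-paths is legitimate (no arcs enter or leave the interior of a path) and that the piecewise definition of~$H_0$ --- in particular the ``otherwise, travel by~$b$'' clause --- is exactly what forces each arc of the contracted digraph onto the vertex representing the source plus~$b$, rather than some other lift; this is the content of the argument of \cite[Lem.~4.1]{LockeWitte}, which the present lemma mirrors. Everything else reduces to counting cosets.
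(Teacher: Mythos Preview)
Your proposal is correct and follows essentially the same approach as the paper's proof: in Case~1 both of you split the vertices according to~$z_v$, observe that the $z=0$ layer breaks into $|G:\langle a,k\rangle|$ pure $a$-cycles, and identify the $z=1$ layer with translation by~$b$ on $G/\langle k\rangle$; in Case~2 both reduce the count to the components of $\Cayd(G/\langle k\rangle;b)$. The only cosmetic difference is that in Case~2 you contract each maximal $a$-path separately before passing to the quotient, whereas the paper first argues (by chaining consecutive $a$-paths via the $(b+k)$-arc from $(|a|/2)a+yb$ to $(y{+}1)b$) that all the $a$-arcs already lie in a single component and are therefore ``essentially irrelevant''; the identification with $G/\langle k\rangle$ is the same.
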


\begin{proof} 
We consider two cases.

\setcounter{case}{0}

\begin{case}
Assume $k \notin \langle a \rangle$.
 \end{case}
For $i \in \{0,1\}$, let $G_i = \{\, v \in G \mid z_v
= i \,\}$, so each of $G_0$ and~$G_1$ has exactly half of the elements of~$G$.
From the definition of~$H_0$, we see that each component
of~$H_0$ is contained in either $G_0$ or~$G_1$. 
	\begin{itemize}
	\item Each component
in~$G_0$ is a cycle of length~$|a|$ (all $a$-arcs), so the
number of components in~$G_0$ is $|G_0|/|a| =(|G|/2)/|a| = |G : \langle a, k \rangle|$. 

	\item The number of components contained in~$G_1$ is equal to the
order of the quotient group $G/\langle b,k\rangle$. In other words, it is $|G : \langle b, k \rangle|$.
	\end{itemize}

\begin{case}
Assume $k \in \langle a \rangle$.
 \end{case}
 Let $xa + yb$ be a vertex that travels by~$a$ in~$H_0$. Then $v
= (|a|/2)a + yb$ is in the same component (by following a sequence
of $a$-arcs). Furthermore, if $y< |G:\langle a \rangle|-1$, then we see that
$x_{v+b} = |a|/2$, so $v$~travels by~$b+k$. Since $k = (|a|/2) a$, this means that
$(y+1)b = v + b + k$ is also in the same component. By induction
on~$y$, this implies that all the $a$-arcs of~$H_0$ are in the
same component, and this component contains some $(b+k)$-arcs.
Thus, the $a$-arcs are essentially irrelevant in counting
components of~$H_0$: there is a natural one-to-one
correspondence between the components of~$H_0$ and the
components of $\Cayd \bigl( G/ \langle k \rangle; b \bigr)$. Thus, the number of
components is $|G : \langle b, k \rangle|$.
 \end{proof}

\begin{lem} \label{odd-components}
$H_0$ has an odd number of connected components.
\end{lem}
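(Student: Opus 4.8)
The plan is to feed the explicit component counts from \cref{NumCompsH0} into a short parity computation, using the structural facts already in hand just before the statement: $G = \langle a-b\rangle \oplus \langle k\rangle$, the element $a-b$ has even order, and $a = a'+k'$, $b = b'+k''$ with $a',b' \in \langle a-b\rangle$ and $k',k'' \in \langle k\rangle$.

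First I would record that $\langle a'-b'\rangle = \langle a-b\rangle$: from $a-b = (a'-b') + (k'-k'')$ and the fact that both $a-b$ and $a'-b'$ lie in $\langle a-b\rangle$, we get $k'-k'' \in \langle a-b\rangle \cap \langle k\rangle = \{0\}$, so $a'-b' = a-b$. Write $\langle a-b\rangle$ as a cyclic group of even order $2t$ with generator $a-b$, and put $a' = m(a-b)$, $b' = \ell(a-b)$; then $(m-\ell)(a-b) = a-b$ forces $m \equiv \ell+1 \pmod{2t}$, so, since $2t$ is even, exactly one of $m$ and~$\ell$ is even. Next I would identify the relevant indices: since $k' \in \langle k\rangle$ we have $\langle a,k\rangle = \langle a',k\rangle = \langle a'\rangle \oplus \langle k\rangle$ (a direct sum because $\langle a'\rangle \subseteq \langle a-b\rangle$ meets $\langle k\rangle$ trivially), whence $|G:\langle a,k\rangle| = |\langle a-b\rangle : \langle a'\rangle| = \gcd(m,2t)$, and likewise $|G:\langle b,k\rangle| = \gcd(\ell,2t)$. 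Because $2t$ is even, $\gcd(m,2t)$ is even exactly when $m$ is even, and similarly for~$\ell$.

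Now the two cases of \cref{NumCompsH0} are immediate. If $k \notin \langle a\rangle$, the component count is $\gcd(m,2t) + \gcd(\ell,2t)$; since exactly one of $m$ and~$\ell$ is even, exactly one summand is even and the other is odd, so the sum is odd. If $k \in \langle a\rangle$, I would first show this forces $k' = k$ and $|a'|$ odd: an element $j(a'+k')$ of $\langle a\rangle$ equals the nontrivial element $k$ of $\langle k\rangle$ only if $ja' = 0$ and $jk' = k$, which needs $k' = k$ and $|a'| \mid j$ with $j$ odd, hence $|a'|$ odd. Then $|a'| = 2t/\gcd(m,2t)$ being odd means $\gcd(m,2t)$ is even, so $m$ is even, so $\ell$ is odd, so $\gcd(\ell,2t)$ is odd; and here the component count is exactly $|G:\langle b,k\rangle| = \gcd(\ell,2t)$, which is therefore odd.

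The only real care needed is the $2$-adic bookkeeping — that ``$a'$ is an even multiple of $a-b$'' is genuinely equivalent to ``$\gcd(m,2t)$ is even'' (this uses $2 \mid 2t$), and that in the case $k \in \langle a\rangle$ the divisibility constraints really do force $|a'|$ to be odd. Everything else is routine once $\langle a'-b'\rangle = \langle a-b\rangle$ and the gcd description of the indices are established, and I expect no genuine obstacle beyond keeping these parities straight.
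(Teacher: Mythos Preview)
Your proof is correct and follows essentially the same route as the paper's: both establish $\langle a'-b'\rangle = \langle a-b\rangle$, deduce that exactly one of $a',b'$ is an even multiple of $a-b$, translate this into opposite parities for the two indices, and in the case $k \in \langle a\rangle$ argue that $|a'|$ is odd to pin down which index is odd. Your version is more explicit (you actually prove $a'-b' = a-b$, give the gcd formulas for the indices, and supply the argument that $k \in \langle a\rangle$ forces $|a'|$ odd, all of which the paper leaves to the reader), but the underlying logic is identical.
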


\begin{proof}
Since $a' - b' \equiv a - b \pmod{\langle k \rangle}$, we have $\langle a' - b' \rangle = \langle a - b \rangle$, so one of~$a'$ and~$b'$ is an even multiple of $a - b$, and the other is an odd multiple. (Otherwise, the difference would be an even multiple of $a - b$, so it would not generate $\langle a - b \rangle$.)
 Thus, one of $|G : \langle a, k \rangle|$ and $|G : \langle b, k \rangle|$ is even, and the other is odd. So $|G : \langle a, k \rangle| + |G : \langle b, k \rangle|$ is odd. By \cref{NumCompsH0}, this establishes the desired conclusion if  $k \notin \langle a \rangle$.
 
We may now assume $k \in \langle a \rangle$.
This implies that the element $a'$ has odd order (and $k'$ must be nontrivial, but we do not need this fact). This means that $a'$ is an even multiple of $a-b$, so $b'$~must be an odd multiple of $a-b$ (since $\langle a' - b' \rangle = \langle a - b \rangle$). Therefore $|\langle a - b \rangle : \langle b' \rangle|$ is odd, which means $|G : \langle b,k \rangle|$ is odd. By \cref{NumCompsH0}, this establishes the desired conclusion.
\end{proof}

\begin{lem} \label{bk=G}
If $\langle b, k \rangle = G$, then $\Cayd(G; a, b, b+k)$ has a hamiltonian cycle.
\end{lem}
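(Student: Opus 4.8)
The plan is to show that the hypotheses force $G$ to be the internal direct sum $\langle b \rangle \oplus \langle k \rangle$, and then to exhibit, by hand, a hamiltonian cycle that uses only the two generators $b$ and~$b+k$. The generator~$a$ plays no role in this case, so neither the reduction to $\langle a-b,k\rangle = G$ nor the parity count of \cref{odd-components} is needed here.

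First I would note that $k \notin \langle b \rangle$, since otherwise $G = \langle b,k \rangle = \langle b \rangle$ would be cyclic, contrary to hypothesis. Because $|k| = 2$, this yields $\langle b \rangle \cap \langle k \rangle = \{0\}$, hence $G = \langle b \rangle \oplus \langle k \rangle$. Writing $n = |b|$, we then have $|G| = 2n$, and the index-two subgroup $\langle b \rangle$ has precisely the two cosets $\langle b \rangle$ and $\langle b \rangle + k$, which partition~$G$.

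Next I would consider the closed walk $\bigl( b^{\,n-1}, b+k \bigr)^2$ based at~$0$; that is, $n-1$ successive steps by~$b$, then one step by~$b+k$, then $n-1$ more steps by~$b$, then a final step by~$b+k$. Tracing it: the first $n-1$ steps by~$b$ visit $0, b, 2b, \dots, (n-1)b$, which are the $n$ distinct elements of $\langle b \rangle$; the step by~$b+k$ then lands on $nb + k = k$; the next $n-1$ steps by~$b$ visit $k, k+b, \dots, k+(n-1)b$, the $n$ distinct elements of the coset $\langle b \rangle + k$; and the final step by~$b+k$ returns to $k + nb + k = 2k = 0$. Since the two cosets are disjoint and cover~$G$, the walk visits all $2n = |G|$ vertices exactly once, so it is a hamiltonian cycle in $\Cayd(G;a,b,b+k)$, as required.

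The argument has no real obstacle; the only point needing care is the routine bookkeeping that the walk closes up (its net displacement is $2nb + 2k = 0$) and that the listed vertices are pairwise distinct. Equivalently, one may invoke the Factor Group Lemma for the cyclic normal subgroup $\langle k \rangle$: the sequence of $n$ steps by~$b$ is a hamiltonian cycle in $\Cayd\bigl(G/\langle k \rangle; \{a,b,b+k\}\bigr)$ since $\overline{b+k}=\bar b$ generates the cyclic group $G/\langle k \rangle$ of order~$n$, and replacing the last of those steps by~$b+k$ makes the total $(n-1)b+(b+k)=k$ a generator of $\langle k \rangle$, so the resulting cycle lifts to a hamiltonian cycle of $\Cayd(G;a,b,b+k)$.
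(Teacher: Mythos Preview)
Your argument is correct and follows essentially the same route as the paper: deduce the direct-sum decomposition $G = \langle b \rangle \oplus \langle k \rangle$ and then exhibit an explicit hamiltonian cycle using only the generators $b$ and $b+k$. The paper first disposes of the cases $\langle b\rangle = G$ and $\langle b+k\rangle = G$ separately (the former is vacuous under the standing non-cyclic hypothesis) and then, having forced $|b|$ to be even, writes down a cycle for that remaining case; your single cycle $(b^{\,n-1}, b+k)^2$ handles all $n=|b|$ at once, so your version is a bit more economical, and your appeal to the Factor Group Lemma for $N=\langle k\rangle$ is a clean way to certify it.
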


\begin{proof}
We may assume $\langle b \rangle \neq G$. (Otherwise, $(b)^{|G|}$ is a hamiltonian cycle.) Then, since $|k| = 2$ and $\langle b, k \rangle = G$, we must have $G = \langle b \rangle \oplus \langle k \rangle$. We may also assume $\langle b + k \rangle \neq G$. (Otherwise, $(b+k)^{|G|}$ is a hamiltonian cycle.) Since $G = \langle b \rangle \oplus \langle k \rangle$, this implies $|b|$~is even. So $(b,k)^{|b|}$ is a hamiltonian cycle.
\end{proof}

The following two lemmas complete the proof of \cref{3genAbelMustBeCyclic}.

\begin{lem} \label{boddindex}
If $|G : \langle b, k \rangle|$ is odd, then $\Cayd(G;a,b,b+k)$ has a hamiltonian cycle.
\end{lem}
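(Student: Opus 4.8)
The plan is to take the spanning subdigraph~$H_0 \in \qf$ constructed above and to amalgamate its connected components, a few at a time, until a single directed cycle is left. Two moves are available: \cref{amalgamate} fuses three components into one (lowering the number of components by~$2$), while \cref{amalgamate23} replaces two components by two others without changing the count and is used only to bring a configuration into a shape to which \cref{amalgamate} applies. By \cref{odd-components}, $H_0$ has an \emph{odd} number of components, so ending at a single cycle is precisely what one should expect; the whole proof therefore consists of checking that, after each move, the current digraph still contains a vertex~$u$ that travels by~$a$ with $u$, $u+k$, and $u+a+k$ in three distinct components (or else a configuration to which \cref{amalgamate23} applies). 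This is exactly Case~4 of the proof of \cite[Thm.~4.1($\Leftarrow$)]{LockeWitte}, which I would adapt, splitting into two subcases according to whether $k \in \langle a \rangle$. Throughout I may assume $\langle b, k \rangle \neq G$, since otherwise $\Cayd(G;a,b,b+k)$ has a hamiltonian cycle by \cref{bk=G}; in particular, the class of~$a$ is then a nontrivial generator of the cyclic quotient $G / \langle b, k \rangle$.

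First suppose $k \notin \langle a \rangle$. Then $H_0$ is the disjoint union of $m := |G : \langle a, k \rangle|$ pure $a$-cycles inside $G_0 = \{\, v \mid z_v = 0 \,\}$ and $\ell := |G : \langle b, k \rangle|$ components inside $G_1 = \{\, v \mid z_v = 1 \,\}$, the latter being in bijection with $G / \langle b, k \rangle$; since $\ell$ is odd, \cref{odd-components} forces $m$~to be even. As $u$ runs over a single $a$-cycle, $u + k$ runs over a full set of residues modulo $\langle b, k \rangle$ (the images of~$a$ generate that quotient), so $u + k$ and $u + a + k = (u+k) + a$ always lie in two \emph{distinct} $G_1$-components. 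Using this I would first thread the $G_1$-components onto one growing component, at each step applying \cref{amalgamate} to a vertex of a fresh $a$-cycle so as to swallow one more $G_1$-component --- the three components involved (the $a$-cycle, the growing component, the new $G_1$-component) being visibly distinct. If the $a$-cycles are used up before the $G_1$-components are, the remaining $G_1$-components come in pairs by parity and are swallowed two at a time by \cref{amalgamate} (applied to an $a$-traveling vertex whose two $k$-translates both lie in not-yet-absorbed $G_1$-components); if instead the $G_1$-components run out first, the leftover $a$-cycles come in pairs because $m$~is even, and each such pair is disposed of by one use of \cref{amalgamate23} followed by one of \cref{amalgamate}. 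A short count on the indices then shows that the process ends with exactly one cycle.

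Now suppose $k \in \langle a \rangle$. Then $H_0$ has $\ell := |G : \langle b, k \rangle|$ components --- odd, again by \cref{odd-components} --- exactly one of which contains all the $a$-arcs, and they correspond bijectively to the components of $\Cayd(G/\langle k \rangle; b)$. Here I would reduce the component count by repeated applications of \cref{amalgamate}, in the order used in Subcase~4.2 of \cite[Thm.~4.1($\Leftarrow$)]{LockeWitte}; the supporting observation is again that $a$ is nontrivial modulo $\langle b, k \rangle$, so that the relevant $k$-translates do land in distinct components. Iterating leaves a single cycle, which is the desired hamiltonian cycle.

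The one genuinely delicate point --- everything else is bookkeeping --- is guaranteeing, at every intermediate stage, the availability of a vertex~$u$ with $u$, $u + k$, $u + a + k$ in three distinct components (or a configuration on which \cref{amalgamate23} can act). Keeping careful track of which components have already been fused makes this go through exactly as in \cite{LockeWitte}, and the parity statement of \cref{odd-components} is what forces the final count to be~$1$ rather than~$2$.
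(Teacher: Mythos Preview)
Your plan is the paper's: build $H_0\in\qf$, split on whether $k\in\langle a\rangle$, and merge components two at a time via \cref{amalgamate} (with \cref{amalgamate23} as a preparatory move), relying on \cref{odd-components} for the parity; for $k\in\langle a\rangle$ your sketch and the paper's argument coincide.

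For $k\notin\langle a\rangle$, however, the order in which you absorb components is not the one in \cite{LockeWitte} (and hence not the paper's). After two initial \cref{amalgamate} steps that pick up the $a$-cycles with $y\in\{0,1\}$ and the $G_1$-residues $\{0,1,2\}$, the paper absorbs \emph{all remaining $a$-cycles first}, in pairs, each pair handled by one \cref{amalgamate23} at the specific vertex $u=(2i-2)b$ followed by one \cref{amalgamate} at $(2i-1)b$; the absorbed $G_1$-residues stay frozen at $\{0,1,2\}$ throughout this phase, and only afterwards are the remaining $G_1$-components eaten in pairs by \cref{amalgamate} alone. Your variant --- absorb one fresh $a$-cycle and one new $G_1$-residue per \cref{amalgamate} step, then mop up whichever family is left --- looks plausible, but your deferral of the ``delicate point'' to \cite{LockeWitte} does not cover it, since that reference uses the other schedule. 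In particular, your leftover-$a$-cycle endgame requires checking that the split produced by \cref{amalgamate23} actually places $u'+k$ and $u'+a+k$ in \emph{different} pieces for some vertex $u'$ of the next leftover $a$-cycle; this is precisely the bookkeeping the paper carries out explicitly for its own schedule, and you would have to redo it for yours.
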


\begin{proof}
From \cref{odd-components}, we know that $H_0$ has an odd number of
components. We construct a hamiltonian cycle by amalgamating
all of these components into one component. We start with the
component containing~$0$, and use \cref{amalgamate,amalgamate23} to add
the other components to it two at a time.

\setcounter{case}{0}

\begin{case} \label{boddindex-notin}
Assume $k \notin \langle a \rangle$.
 \end{case}
We may assume $\langle b, k \rangle \neq G$, for otherwise \cref{bk=G} applies. Since, by assumption, $|G : \langle b, k \rangle|$ is odd, this implies $|G : \langle b, k \rangle| \ge 3$. 

From the first paragraph of the proof of \cref{odd-components}, we know that $|G : \langle a, k \rangle| + |G : \langle b, k \rangle|$ is odd. Since $|G : \langle b, k \rangle|$ is also odd, this implies that $|G : \langle a, k \rangle|$ is even. For convenience, let
	$ m = |G : \langle a, k \rangle|$.

 Note that two vertices $u$~and~$v$ are in the same component
of~$H_0$ if and only if either
 \begin{itemize}
 \item $z_u = z_v = 0$ and $y_u = y_v$; or
 \item $z_u = z_v = 1$ and $x_u \equiv x_v \pmod{|G : \langle b , k \rangle|}$. 
 \end{itemize}

Lemma~\ref{amalgamate} implies there is an element~$H_0'$
of~$\qf$, such that $0$, $k$, and~$a+k$ are all in the same
component of~$H_0'$. (The other components of~$H_0'$ are
components of~$H_0$.) 

Then Lemma~\ref{amalgamate} implies there is an element~$H_1 =
(H_0')'$ of~$\qf$, such that $a+b$, $a+b+k$, and~$2a+b+k$ are
all in the same component of~$H_1$. (The other components
of~$H_1$ are components of~$H_0$.) 

With this as the base case of an inductive construction, we
construct, for $1 \le i \le m/2$, an element~$H_i$ of~$\qf$,
such that 
 $$ \{\, v \mid
 \text{$z_v = 0$ and $0 \le y_v \le 2i-1$}\,\}
 \cup 
 \{\, v \mid
 \text{$z_v = 1$ and $x_v \equiv
 0,1,\text{ or } 2 \pmod {|G : \langle b, k \rangle|}$} \,\}
 $$
 is a component of~$H_i$, and all other components of~$H_i$ are
components of~$H_0$. Namely:
	\begin{itemize} \itemsep=\smallskipamount
	\item We apply \cref{amalgamate23} to $H_{i-1}$ with $u = (2i-2)b$, to obtain $H_{i-1}' \in \qf$, such that $(2i-2)b$ and $(2i-1)b + k$ are in the same component, but $a + (2i-2)b + k$ is in a different component. Specifically, the following arcs are removed from~$H_{i-1}$:
	$$ \begin{array}{rrrrrrrrrrr}
  & &(2i-2)b & &   &\to&   a&+& (2i-2)b \\
  & &(2i-2)b &+& k &\to&    & & (2i-1)b &+& k \\
 a&+&(2i-3)b &+& k &\to&   a&+& (2i-2)b &+& k 
 	\hbox to 0pt{.\hss} \end{array}
	$$
In their place, the following arcs are inserted into~$H_{i-1}'$:
 $$ \begin{array}{rrrrrrrrrrr}
  & &(2i-2)b & &   &\to&    & & (2i-1)b &+& k \\
  & &(2i-2)b &+& k &\to&   a&+& (2i-2)b &+& k \\
 a&+&(2i-3)b &+& k &\to&   a&+& (2i-2)b 
\hbox to 0pt{.\hss} \end{array} $$
	Note that the two vertices $a + (2i-2)b + k$ and $a + (2i-1)b + k$ are in the same component of~$H_{i-1}'$. (Indeed, they are adjacent, since $a + (2i-2)b + k$ travels by~$b$.)

	\item Then $(2i-1)b$, $(2i-1)b + k$, and $a + (2i-1)b + k$ are all in different components of $H_{i-1}'$, so \cref{amalgamate} yields $H_i = (H_{i-1}')'$, such that all three of these vertices are in the same component. (And all other components are components of~$H_0$.)
	Specifically, the following arcs are removed from~$H_{i-1}'$:
 $$ \begin{array}{rrrrrrrrrrr}
  & &(2i-1)b & &   &\to&   a&+& (2i-1)b \\
  & &(2i-1)b &+& k &\to&    &  v & = \hfill  (2i)b & \text{or} & (2i)b + k \\
 a&+&(2i-2)b &+& k &\to&   a&+& (2i-1)b &+& k \hbox to 0pt{.\hss}\hfil\hfil  \\
\hbox to 0pt{.\hss} \end{array} $$
In their place, the following arcs are inserted into~$H_i$:
 $$ \begin{array}{rrrrrrrrrrr}
  & &(2i-1)b & &   &\to&    & & v\hfil \\
  & &(2i-1)b &+& k &\to&   a&+& (2i-1)b &+& k \\
 a&+&(2i-2)b &+& k &\to&   a&+& (2i-1)b 
 \hbox to 0pt{.\hss}\end{array} $$
 	\end{itemize}

Let $K_1 = H_{m/2}$. With this as the base case of an
inductive construction, we construct, for $1 \le i \le
\bigl(|G : \langle b, k \rangle|-1 \bigr)/2$, an element~$K_i$ of~$\qf$, such that 
 $$ \{\, v \mid z_v = 0 \,\}
 \cup 
 \{\, v \mid
 \text{$z_v = 1$ and $x_v \equiv
 0,1, \ldots, \text{ or~$2i$} \pmod {|G : \langle b, k \rangle|}$} \,\}
 $$
 is a component of~$K_i$, and all other components of~$K_i$ are
components of~$H_0$. Namely, Lemma~\ref{amalgamate} implies
there is an element~$K_i = K_{i-1}'$ of~$\qf$, such that
$(2i-1)a$, $(2i-1)a+k$, and $(2i)a + k$ are all in the same
component of~$K_i$. 

Then, for $i = \bigl(|G : \langle b, k \rangle|-1 \bigr)/2$, we see that a single
component of~$K_i$ contains every vertex, so $K_i$ is a
hamiltonian cycle.

\begin{case}
Assume $k \in \langle a \rangle$.
 \end{case}
 Note that one component of~$H_0$ is
 $$ \{\, v \mid x_v < |a|/2 \,\}
 \cup
 \{ \, v \mid x_v \equiv 0 \pmod{|G : \langle b, k \rangle|} \, \} .$$
 Two vertices $u$~and~$v$ that are not in this component are in
the same component of~$H_0$ if and only if $x_u \equiv x_v
\pmod{|G : \langle b, k \rangle|}$.

We may assume $\langle a \rangle \neq G$, for otherwise $(a)^{|a|}$ is a hamiltonian cycle. With $H_0$ as the base case of an inductive
construction, we construct, for $0 \le i \le \bigl(|G : \langle b, k \rangle|-1 \bigr)/2$, an element~$H_i$ of~$\qf$, such that 
 $$ \{\, v \mid x_v < |a|/2 \,\}
 \cup
 \{ \, v \mid x_v \equiv 
 0,1, \ldots, \text{ or~$2i$} \pmod{|G : \langle b, k \rangle|} \,\}
 $$
 is a component of~$H_i$, and all other components of~$H_i$ are
components of~$H_0$. Namely, Lemma~\ref{amalgamate} implies
there is an element~$H_i = H_{i-1}'$ of~$\qf$, such that
$(2i-1)a$, $(2i-1)a+k$, and $(2i)a + k$ are all in the same
component of~$H_i$. 

Then, for $i = \bigl(|G : \langle b, k \rangle|-1\bigr)/2$, we see that a single
component of~$H_i$ contains every vertex, so $H_i$ is a
hamiltonian cycle.
 \end{proof}

\begin{lem}
If $|G : \langle b, k \rangle|$ is even, then $\Cayd(G;a,b,b+k)$ has a hamiltonian cycle.
\end{lem}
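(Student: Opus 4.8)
The plan is to follow the pattern of \cref{boddindex}: start from the digraph $H_0$ of \cref{H0-defn} and amalgamate all of its connected components into a single directed cycle, which will then be the required hamiltonian cycle. The first step is to fix the structural parameters. By \cref{odd-components}, $H_0$ has an odd number of components; if $k \in \langle a \rangle$, then \cref{NumCompsH0} says this number equals $|G : \langle b, k \rangle|$, which is even by hypothesis, a contradiction. So $k \notin \langle a \rangle$, and \cref{NumCompsH0} then gives that $|G : \langle a, k \rangle| + |G : \langle b, k \rangle|$ is odd; since $|G : \langle b, k \rangle|$ is even, the index $m := |G : \langle a, k \rangle|$ is odd. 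I would also record that we may assume $\langle b, k \rangle \neq G$, since otherwise \cref{bk=G} finishes the proof, so $|G : \langle b, k \rangle| \geq 2$.

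Next, I would recall, as in the case $k \notin \langle a \rangle$ of the proof of \cref{boddindex}, that two vertices $u,v$ lie in a common component of $H_0$ precisely when either $z_u = z_v = 0$ and $y_u = y_v$, or $z_u = z_v = 1$ and $x_u \equiv x_v \pmod{|G : \langle b, k \rangle|}$. The amalgamation then runs in two phases, mirroring Subcase~4.1 of \cite[Thm.~4.1($\Leftarrow$)]{LockeWitte} and the corresponding part of \cref{boddindex}, but with the indices shifted because $m$ is now \emph{odd} (and because the first phase need only collect the two residue classes $0$ and $1$ modulo $|G : \langle b, k \rangle|$, rather than three). In the first phase, one application of \cref{amalgamate} (with $u = 0$) gives $H_1 \in \qf$ whose distinguished component is the union of the $a$-cycle through $0$ with the two $z_v = 1$ cycles on which $x_v \equiv 0,1 \pmod{|G:\langle b,k\rangle|}$; then, for $2 \le i \le (m+1)/2$, alternating one use of \cref{amalgamate23} with one use of \cref{amalgamate} --- exactly the step used in Subcase~4.1 of \cite{LockeWitte} and in \cref{boddindex}, but with the index ``$2i$'' there replaced by ``$2i-1$'' --- produces $H_i \in \qf$ whose distinguished component is
	$$ \{\, v \mid z_v = 0,\ 0 \le y_v \le 2i-2 \,\} \cup \{\, v \mid z_v = 1,\ x_v \equiv 0 \text{ or } 1 \pmod{|G : \langle b, k \rangle|} \,\} , $$
all remaining components being components of $H_0$. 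Since $m$ is odd, $i = (m+1)/2$ is an integer, and at that stage the distinguished component contains every vertex with $z_v = 0$.

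In the second phase I would set $K_1 = H_{(m+1)/2}$ and inductively absorb the remaining $z_v = 1$ cycles two residue classes at a time: \cref{amalgamate}, applied with $u = (2i-2)a$, yields $K_i = K_{i-1}'$ in which the vertices $(2i-2)a$, $(2i-2)a + k$, and $(2i-1)a + k$ all join the distinguished component, so that for $1 \le i \le |G : \langle b, k \rangle|/2$ the distinguished component of $K_i$ equals
	$$ \{\, v \mid z_v = 0 \,\} \cup \{\, v \mid z_v = 1,\ x_v \equiv 0, 1, \ldots, \text{ or } 2i - 1 \pmod{|G : \langle b, k \rangle|} \,\} . $$
As $|G : \langle b, k \rangle|$ is even, $i = |G : \langle b, k \rangle|/2$ is an integer, and for that value $K_i$ has a single component, hence is a hamiltonian cycle. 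Together with \cref{boddindex} (the case $|G:\langle b,k\rangle|$ odd) and \cref{IfabkNotG} (the case $\langle a-b,k\rangle \neq G$), this also completes the proof of \cref{3genAbelMustBeCyclic}.

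The step I expect to be the main obstacle is the bookkeeping inside these two inductions: at each stage one must verify that the three vertices handed to \cref{amalgamate} or \cref{amalgamate23} occupy exactly the configuration those lemmas require --- three distinct components, or the ``two-in-one-component, one-in-another'' split --- and that the residues $2i-2$ and $2i-1$ modulo $|G:\langle b,k\rangle|$ run, as $i$ ranges over the indicated interval, over every residue modulo $|G:\langle b,k\rangle|$ exactly once. This is precisely the verification performed in Subcase~4.1 of \cite{LockeWitte}; the only genuinely new point is checking that it survives the index shift forced by $m$ being odd and by the first phase starting from two residue classes instead of three.
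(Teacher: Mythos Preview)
Your proposal is correct and follows essentially the same approach as the paper: you deduce $k \notin \langle a \rangle$ and $m$ odd from \cref{odd-components} and \cref{NumCompsH0}, then run the two-phase amalgamation exactly as the paper does, with the index shift $2i \to 2i-1$ in Phase~1 (so that $H_i$ collects $y_v \le 2i-2$ and residues $0,1$) and the $K_i$ construction absorbing residues two at a time in Phase~2. Your choice of $u = (2i-2)a$ in Phase~2 agrees with the paper's main-text version.
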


\begin{proof}
For convenience, let $m = |G : \langle a, k \rangle|$. Since $|G : \langle b, k \rangle|$ is even, and \cref{odd-components} tells us that $H_0$ has an odd number of
components, we see from \cref{NumCompsH0} that 
	$$ \text{$k \notin \langle a \rangle$ and $m$ is odd} .$$ 

Define $H_0'$ as in \cref{boddindex-notin} of the proof of \cref{boddindex}, and let $H_1 = H_0'$.
With this as the base case of an inductive construction, we
inductively construct, for $1 \le i \le (m+1)/2$, an element~$H_i$ of~$\qf$,
such that 
$$ \{\, v \mid \text{$z_v = 0$ and $0 \le y_v \le 2i-2$} \,\}
	\cup
	\{\, v \mid \text{$z_v = 1$ and $x_v = 0$ or~$1$ (mod $|G : \langle b, k \rangle|$)} \,\} $$
is a component of~$H_i$, and all other components are components of~$H_0$. 
Namely:
	\begin{itemize} \itemsep=\smallskipamount
	\item We apply \cref{amalgamate23} to $H_{i-1}$ with $u = (2i-3)b$, to obtain $H_{i-1}' \in \qf$, such that $(2i-3)b$ and $(2i-2)b + k$ are in the same component, but $a + (2i-3)b + k$ is in a different component. Specifically, the following arcs are removed from~$H_{i-1}$:
	$$ \begin{array}{rrrrrrrrrrr}
  & &(2i-3)b & &   &\to&   a&+& (2i-3)b \\
  & &(2i-3)b &+& k &\to&    & & (2i-2)b &+& k \\
 a&+&(2i-4)b &+& k &\to&   a&+& (2i-3)b &+& k 
 	\hbox to 0pt{.\hss} \end{array}
	$$
In their place, the following arcs are inserted into~$H_{i-1}'$:
 $$ \begin{array}{rrrrrrrrrrr}
  & &(2i-3)b & &   &\to&    & & (2i-2)b &+& k \\
  & &(2i-3)b &+& k &\to&   a&+& (2i-3)b &+& k \\
 a&+&(2i-4)b &+& k &\to&   a&+& (2i-3)b 
\hbox to 0pt{.\hss} \end{array} $$
Note that the two vertices $a + (2i-3)b + k$ and $a + (2i-2)b + k$ are in the same component of~$H_{i-1}'$ (indeed, they are adjacent).

	\item Then $(2i-2)b$, $(2i-2)b + k$, and $a + (2i-2)b + k$ are all in different components of $H_{i-1}'$, so \cref{amalgamate} yields $H_i = (H_{i-1}')'$, such that all three of these vertices are in the same component. (And all other components are components of~$H_0$.)
	Specifically, the following arcs are removed from~$H_{i-1}'$:
 $$ \begin{array}{rrrrrrrrrrr}
  & &(2i-2)b & &   &\to&   a&+& (2i-2)b \\
  & &(2i-2)b &+& k &\to&    &  v & = \   (2i-1)b & \text{or} & (2i-1)b + k  \\
 a&+&(2i-3)b &+& k &\to&   a&+& (2i-2)b &+& k \hbox to 0pt{.\hss}\hfil\hfil 
 \end{array} $$
In their place, the following arcs are inserted into~$H_i$:
 $$ \begin{array}{rrrrrrrrrrr}
  & &(2i-2)b & &   &\to&    & & v\hfil \\
  & &(2i-2)b &+& k &\to&   a&+& (2i-2)b &+& k \\
 a&+&(2i-3)b &+& k &\to&   a&+& (2i-2)b 
 \hbox to 0pt{.\hss}\end{array} $$

	\end{itemize}

Let $K_1 = H_{(m+1)/2}$. With this as the base case of an
inductive construction, we construct, for $1 \le i \le
|G : \langle b, k \rangle|/2$, an element~$K_i$ of~$\qf$, such that 
 $$ \{\, v \mid z_v = 0 \,\}
 \cup 
 \{\, v \mid
 \text{$z_v = 1$ and $x_v \equiv
 0,1, \ldots, \text{ or~$2i - 1$} \pmod {|G : \langle b, k \rangle|}$} \,\}
 $$
 is a component of~$K_i$, and all other components of~$K_i$ are
components of~$H_0$. Namely, Lemma~\ref{amalgamate} implies
there is an element $K_i = K_{i-1}'$ of~$\qf$, such that
$(2i-1)a$, $(2i-1)a+k$, and $(2i)a + k$ are all in the same
component of~$K_i$. 

Then, for $i = |G : \langle b, k \rangle|/2$, we see that a single
component of~$K_i$ contains every vertex, so $K_i$ is a
hamiltonian cycle.
\end{proof}

\end{document}